\documentclass{amsart}

\usepackage{amsmath,amsthm,amssymb,amsfonts}
\usepackage{bbm}
\usepackage[mathscr]{eucal}
\usepackage[english]{babel}
\usepackage{graphicx}

\allowdisplaybreaks[1]

\newtheorem{thm}{Theorem}[section]
\newtheorem{cor}[thm]{Corollary}
\newtheorem{lem}[thm]{Lemma}
\newtheorem{prop}[thm]{Proposition}
\newtheorem{rem}[thm]{Remark}

\newtheorem{defn}[thm]{Definition}

\theoremstyle{remark}

\numberwithin{equation}{section}

\newcommand{\thmref}[1]{Theorem~\ref{#1}}
\newcommand{\lemref}[1]{Lemma \ref{#1}}
\newcommand{\propref}[1]{Proposition \ref{#1}}
\newcommand{\corref}[1]{Corollary \ref{#1}}
\newcommand{\remref}[1]{Remark \ref{#1}}

\def\BB{{\mathbb B}}
\def\CC{{\mathbb C}}
\def\NN{{\mathbb N}}
\def\RR{{\mathbb R}}
\def\SS{{{\mathbb S}^{d-1}}}

\def\ZZ{{\mathbb Z}}

\def\cB{\mathcal{B}}
\def\cC{\mathcal{C}}

\def\cF{\mathcal{F}}

\def\cH{\mathcal{H}}

\def\cN{\mathcal{N}}
\def\cP{\mathcal{P}}
\def\cQ{\mathcal{Q}}

\def\cS{\mathcal{S}}
\def\cU{\mathcal{U}}

\def\cX{\mathcal{X}}
\def\cY{\mathcal{Y}}
\def\cZ{\mathcal{Z}}

\def\fb{{\mathfrak b}}
\def\ff{{\mathfrak f}}
\def\fg{{\mathfrak g}}

\def\fD{{\mathfrak D}}

\def\nA{\mathscr{D}}

\def\HHH{\mathscr{H}}

\def\sA{\mathscr{A}}

\newcommand{\PP}{Z}
\def\eps{{\varepsilon}}
\def\u*{*}

\def\bbb{b}
\def\kk{k}

\def\supp{\operatorname{supp}}

\def\QQ{{\cQ}}
\def\NNN{\mathscr{N}}
\def\ONE{{\mathbbm 1}}
\def\aa{\varphi}
\def\LL{\Psi}
\def\YY{\mathscr Y}

\def\nA{A}

\def\ww{\widetilde{w}}
\def\betaw{\bar{\beta}}

\newcommand{\sepTL}[3]{\overset{\!\!\!\!\circ}{#1_\infty^{#2#3}}}
\newcommand{\sepTLb}[3]{\overset{\!\!\!\!\!\!\circ}{#1_\infty^{#2#3}}}

\def\ups{\upsilon}
\def\Ups{\Upsilon}
\def\bb{\beta}
\def\BMO{\operatorname{BMO}}
\def\VMO{\operatorname{VMO}}
\def\avg{\operatorname{Avg}}
\def\BMOH{\operatorname{BMOH}}
\def\VMOH{\operatorname{VMOH}}

\begin{document}

\title[Nonlinear approximation of harmonic functions in BMO]
{Nonlinear approximation of harmonic functions from shifts of the Newtonian kernel in BMO}

\author[K. G. Ivanov]{Kamen G. Ivanov}
\address{Institute of Mathematics and Informatics\\
Bulgarian Academy of Sciences\\
Sofia, Bulgaria}
\email{kamen@math.bas.bg}

\author[P. Petrushev]{Pencho Petrushev}
\address{Department of Mathematics\\
University of South Carolina\\
Columbia, SC}
\email{pencho@math.sc.edu}

\subjclass[2010]{Primary 41A17, 41A25; Secondary 42C15, 42C40, 42B35, 42B30}
\keywords{Nonlinear approximation, harmonic functions, Newtonian kernel, Bounded mean oscillation, Vanishing mean oscillation, Besov spaces, frame decomposition}
\thanks{Both authors have been supported by Grant KP-06-N62/4 of the Fund for Scientific Research of the Bulgarian Ministry of Education and Science.}
%\thanks{Corresponding author: Pencho Petrushev, E-mail: pencho@math.sc.edu}

\begin{abstract}
We study nonlinear $n$-term approximation of harmonic functions on the unit ball in $\RR^d$ from
linear combinations of shifts of the Newtonian kernel (fundamental solution of the Laplace equation) in BMO.
A sharp Jackson estimate is established that naturally involves certain Besov spaces.
The method for obtaining this result is based on the construction of highly localized frames for
Besov spaces and VMO on the sphere whose elements are linear combinations of
a fixed number of shifts of the Newtonian kernel.
\end{abstract}

%\date{February 3, 2025}

\maketitle

\tableofcontents

\section{Introduction}\label{s1}

The shifts of the Newtonian kernel $\frac{1}{|x|^{d-2}}$ in dimension $d>2$ or $\ln\frac{1}{|x|}$ if $d=2$
are principle building blocks in Potential theory.
Our main goal is to obtain optimal rates of nonlinear $n$-term approximation of harmonic functions
from finite linear combinations of shifts of the Newtonian kernel in the harmonic $\BMO$ space on the unit ball $B^d$ in $\RR^d$.

To be more specific, let  $\NNN_n$ be the set of all harmonic functions $G$ on $B^d$ that can be represented in the form
\begin{equation}\label{lin-comb}
G(x)=a_0+\sum_{j=1}^n \frac{a_j}{|x-y_j|^{d-2}}\;\;\hbox{if} \;\; d>2
\;\;\hbox{or}\;\;
G(x)=a_0+\sum_{j=1}^n a_j\ln \frac{1}{|x-y_j|}\;\;\hbox{if} \;\; d=2,
\end{equation}
where the poles $\{y_j\}$ are in $\RR^d\setminus \overline{B^d}$ and the coefficients $a_j\in \CC$.
The points $\{y_j\}$ are allowed to vary with the function $G$ and hence $\NNN_n$ is nonlinear.

Denote by $\BMOH$ the set of all harmonic function $U$ on $B^d$
with boundary value function $f_U$ in $\BMO$
and let $\|\cdot\|_{\BMOH}$ be the induced norm (see Definition~\ref{def:BMOH}).
Since the space $\BMOH$ is non-separable it is natural to approximate
harmonic functions in the space $\VMOH$ of all harmonic function $U$ on $B^d$
with boundary value function $f_U$ in $\VMO$ which is a separable subspace of $\BMOH$.

Given $U\in \VMOH$ we define
\begin{equation}\label{def-En}
E_n(U)_{\BMOH}:=\inf_{G\in\NNN_n}\|U-G\|_{\BMOH}.
\end{equation}
Our goal is to establish sharp estimates on $\{E_n(U)\}$ for functions
in the ``natural" harmonic Besov type spaces on $B^d$.
Naturally, this approximation is equivalent to approximation of functions $f\in \VMO(\SS)$
on the unit sphere $\SS$ in $\RR^d$ from $\NNN_n$ in the $\BMO(\SS)$ norm.

This is a followup paper to \cite{IP2}, where we treated the problem
for nonlinear $n$-term approximation of functions in the harmonic Hardy spaces $\HHH^p(B^d)$, $0<p<\infty$, 
from shifts of the Newtonian kernel. %from $\NNN_n$.

Why we consider approximation of harmonic functions in the $\BMO$ norm rather than in the $L^\infty$ norm?
The main reason for this is that the space $\BMO$ fits well in the Littlewood-Paley theory,
while $L^\infty$ is completely outside this theory.

As can be expected the harmonic Besov spaces
\begin{equation*}
B^{s\tau}_\tau(\HHH)
\quad\hbox{with}\quad
1/\tau=s/(d-1), \; s>0,
\end{equation*}
play a major role in the approximation process here.
The principle result in this article (Theorem~\ref{thm:Jackson_VMOH}) asserts that
if $U\in B^{s\tau}_\tau(\HHH)$,
then $U\in\VMOH$ and
\begin{equation}\label{Jack}
E_n(U)_{\BMOH} \le cn^{-s/(d-1)}\|U\|_{B^{s\tau}_\tau(\HHH)}.
\end{equation}

The main difficulty in approximating functions using shift of the Newtonian kernel
is rooted in its poor localization.
The highly localized summability kernels in terms of shifts of the Newtonian kernel, constructed in \cite{IP1},
come to the rescue.
To obtain our approximation result we follow the approach from \cite{IP2}.
Namely, we first use the kernels from \cite{IP1} to construct a pair of dual frames $\{\theta_\xi\}$, $\{\tilde\theta_\xi\}$
for the respective Besov spaces and for VMO on $\SS$ whose elements $\{\theta_\xi\}$  are linear combinations of
a fixed number of shifts of the Newtonian kernel and are well localized.
Second, we apply an intermediate nonlinear $n$-term approximation from $\{\theta_\xi\}$
to the boundary value function $f_U$ of the harmonic function $U$ to be approximated.
Finally, using the fact that each $\theta_\xi$ is a finite linear combination of shifts of the Newtonian kernel
we obtain the desired estimate by harmonic extension to $B^d$ of the approxiamant.

The realization of the above program
depends heavily on the fact that
$\BMO$ can be identified with the Triebel-Lizorkin space $\cF^{02}_\infty$.
In a natural progression we are led to the problem of approximation of
functions in the more general Triebel-Lizorkin spaces $\cF^{sq}_\infty(\SS)$ on the sphere
from shifts of the Newtonian kernel.
However, the spaces $\cF^{sq}_\infty(\SS)$ are non-separable and, therefore, not appropriate for approximation.
To remedy this inconvenience, we introduce the \emph{separable} Triebel-Lizorkin spaces $\sepTL{\cF}{s}{q}(\SS)$,
study their properties and create a pair of dual frames for them from shifts of the Newtonian kernel.
It turns out that, in particular, we have $\sepTL{\cF}{0}{2}(\SS)=\VMO(\SS)$.
The gist of our approach is that the approximation in $\cF^{sq}_\infty(\SS)$
reduces to nonlinear $n$-term approximation in the respective sequence spaces.
In \thmref{thm:Jackson_TL} we establish the following estimate
\begin{equation}\label{Jack3}
E_n(f)_{\cF^{0q}_\infty(\SS)} \le cn^{-s/(d-1)}\|f\|_{\cB^{s\tau}_\tau(\SS)},\quad 0<q<\infty,
\end{equation}
which leads to (\thmref{thm:Jackson_VMO})
\begin{equation}\label{Jack2}
E_n(f)_{\BMO(\SS)} \le cn^{-s/(d-1)}\|f\|_{\cB^{s\tau}_\tau(\SS)}.
\end{equation}
In turn, this estimate implies our main estimate \eqref{Jack}.

\medskip

\noindent
{\em Conjecture: Bernstein inequality.}
We conjecture that the following Bernstein type inequality is valid:
Let $s>0$, and $1/\tau=s/(d-1)$. Then
\begin{equation}\label{bernstein}
\|G\|_{B_\tau^{s\tau}(\HHH)} \le cn^{s/(d-1)}\|G\|_{\BMOH}, \quad \forall G\in \NNN_n.
\end{equation}
If valid this estimate along with the Jackson estimate (\ref{Jack}) would lead to a complete characterization
of the rates of approximation (approximation spaces) of nonlinear $n$-term approximation
of harmonic functions in $\VMOH(B^d)$ from shifts of the Newtonian kernel.
Theorems \ref{thm:Jackson} and \ref{thm:Bernstein}, which are sequence space analogs to our Jackson and Bernstein inequalities, 
support this conjecture.

\smallskip

\noindent
{\em Outline.}
This article is organized as follows.
In Section~\ref{sec:background}, we set some basic notations and assemble background material
about spherical harmonics, highly localized kernels, maximal $\delta$-nets, cubature formulas, and nested structures on the sphere.
In Section~\ref{s2}, we present some basic facts about Besov and Triebel-Lizorkin spaces
on $\SS$ developed in \cite{IP};
we also recall the construction of frames for Besov and Triebel-Lizorkin spaces on $\SS$ from \cite{NPW2} and \cite{IP2}.
In Section~\ref{sec:F-spaces}, the Triebel-Lizorkin spaces $\cF^{sq}_\infty$ and the separable Triebel-Lizorkin spaces $\sepTL{\cF}{s}{q}$ are developed,
including the decomposition of $\sepTL{\cF}{s}{q}$ via the needlet frame from \cite{NPW2}
and a frame whose elements are linear combinations of finitely many shifts of the Newtonian kernel.
The identifications of $\BMO(\SS)$ with $\cF^{02}_\infty(\SS)$ and of $\VMO(\SS)$ with $\sepTL{\cF}{0}{2}(\SS)$ are established in Section~\ref{sec:BMO-F-spaces}.
The harmonic $\BMO$ and $\VMO$ spaces are defined and discussed in Section~\ref{sec:harmonic-BMO}.
In Section~\ref{sec:approximation}, we present our main results on nonlinear $n$-term approximation of functions in
the Triebel-Lizorkin spaces $\cF^{sq}_\infty$, the $\BMO$ space on $\SS$ and the harmonic $\BMO$
space on $B^d$ from linear combinations of shifts of the Newtonian kernels.

\smallskip

\noindent
{\em Notation.}
Throughout this article we use standard notation introduced in \S \ref{s1_2}.
Positive constants will be denoted by $c$ and they may vary at every occurrence.
The notation $a\sim b$ will stand for $c^{-1}a\le b \le ca$.

\bigskip

\section{Background and technical ground work}\label{sec:background}

\subsection{Basic notation and simple inequalities}\label{s1_2}

In this article we use standard notation.
Thus $\RR^d$ stands for the $d$-dimensional Euclidean space.
The inner product of $x,y\in\RR^d$ is denoted by $x\cdot y =\sum_{k=1}^d x_ky_k$ and
the Euclidean norm of $x$ by $|x|=\sqrt{x\cdot x}$.
We write $\BB(x_0,r):=\{x : |x-x_0|< r\}$
and set $B^d:=\BB(0, 1)$, the open unit ball in $\RR^d$.

As usual $\NN_0$ stands for the set of non-negative integers.
For $\beta=(\beta_1,\dots,\beta_d)\in\NN_0^d$ the monomial $x^\beta$
is defined by $x^\beta:=x_1^{\beta_1}\dots x_d^{\beta_d}$ and $|\beta|:=\beta_1+\dots+\beta_d$ is its degree.
The set of all polynomials in $x\in\RR^d$ of total degree $n$ is denoted by $\cP_n^d$.
We denote $\partial_k:=\partial/\partial x_k$
and then $\partial^\beta:=\partial_1^{\beta_1}\dots \partial_d^{\beta_d}$ is a differential operator of order $|\beta|$,
the gradient operator is $\nabla:=(\partial_1,\dots, \partial_d)$,
and $\Delta:=\partial_1^2+\dots +\partial_d^2$ stands for the Laplacian.
When necessary we indicate the variable of differentiation by a subscript, e.g. $\partial^\beta_x$.

The unit sphere in $\RR^d$ is denoted by $\SS:=\{x : |x|=1\}$.
We denote by $\rho(x, y)$ the geodesic distance between $x, y\in\SS$, that is,
$\rho(x,y):=\arccos (x\cdot y)$.
The open spherical cap (ball on the sphere) centred at $\eta\in\SS$ of radius $r$ is denoted by $B(\eta,r)=\{x\in\SS : \rho(\eta,x)<r\}$.
For $B=B(\eta,r)$ the notation $\lambda B$ means the concentric cap $B(\eta,\lambda r)$.

The Lebesgue measure on $\SS$ is denoted by $\sigma$ and
we set $|E|:=\sigma(E)$ for a~measurable set $E\subset \SS$.
Thus, $\omega_d:=|\SS|=2\pi^{d/2}/\Gamma(d/2)$.

The area/volume of a spherical cap $B(x,r)$ on $\SS$, $d\ge 2$, is given by
\begin{equation*}
|B(x,r)|=\omega_{d-1}\int_0^r \sin^{d-2}v\,dv.
\end{equation*}
Hence
\begin{equation}\label{sph_cap2}
|B(x_1,r_1)|/|B(x_2,r_2)|\le (r_1/r_2)^{d-1},\quad 0<r_2\le r_1\le\pi,\quad x_1,x_2\in\SS,
\end{equation}
\begin{equation}\label{sph_cap}
1/c\le |B(x,r)|/r^{d-1}\le c,\quad 0<r\le\pi,\quad x\in\SS,
\end{equation}
where $c$ is a constant depending only on $d$.

The inner product of $f,g\in L^2(\SS)$ is given by
\begin{equation*}
	\left\langle f,g\right\rangle := \int_{\SS} f(y)\overline{g(y)}\,d\sigma(y).
\end{equation*}
The nonstandard convolution of functions
$F\in L^\infty[-1,1]$ and $g\in L(\SS)$ is defined by
\begin{equation}\label{def-conv}
F*g(x):=\left\langle F(x\cdot\bullet),\overline{g}\right\rangle=\int_{\SS}F(x\cdot y)g(y) d\sigma(y),\quad x\in\SS.
\end{equation}

We say that a function $f$ defined on $\SS$
is \emph{localized around} $\eta\in\SS$ with dilation factor $N$ and decay rate $M>0$ if the estimate
\begin{equation}\label{eq:local_1}
|f(x)|\le \kappa N^{d-1}(1+N\rho(\eta,x))^{-M},\quad x\in\SS,
\end{equation}
holds for some constant $\kappa >0$ independent of $N, x, \eta$.
The multiplier $N^{d-1}$ is used as part of the decay function in \eqref{eq:local_1} in order to have $\|f\|_{L^1(\SS)}\le c$.
Namely, for $M> d-1$ we have
\begin{equation}\label{eq:conv_1}
\int_{\SS} \frac{N^{d-1}}{(1+N\rho(\eta,y))^M}d\sigma(y)\le c,\quad \forall \eta\in\SS,~\forall N\ge 1,
\end{equation}
where $c=c(d)/(M-d+1)$ depends only on $d$ and $M$. The good localization is also demonstrated by
\begin{equation}\label{eq:conv_2}
\int_{\SS\backslash B(\eta,r)} \frac{N^{d-1}}{(1+N\rho(\eta,y))^M}d\sigma(y)\le c(Nr)^{-(M+d-1)},\quad \forall r\in(0,\pi].
\end{equation}
for all $\eta\in\SS$, $N\ge 1$, where $c=c(d)/(M-d+1)$ depends only on $d$ and $M$.

The weight function in the right-hand side of \eqref{eq:local_1} also has the property:
For any $\eta_1,\eta_2\in\SS$  with
$\rho(\eta_1,\eta_2)\le N^{-1}$
\begin{equation}\label{comp_local}
(1+N\rho(\eta_2, x))^{-M}\le 2^M(1+N\rho(\eta_1, x))^{-M},\quad \forall x\in\SS.
\end{equation}
Indeed, $1+N\rho(\eta_1, x)\le 1+N(\rho(\eta_1,\eta_2)+\rho(\eta_2, x))\le 2+N\rho(\eta_2, x)$, which implies \eqref{comp_local}.

\subsection{Spherical harmonics}\label{s1_3}

The spherical harmonics will be our main tool in dealing with harmonic functions
on the unit ball $B^d$ in $\RR^d$.

Denote by $\cH_\kk$ the space of all spherical harmonics of degree $\kk$ on $\SS$,
i.e. the restriction on $\SS$ of all harmonic  
homogeneous polynomials of degree $k$.
As is well known the dimension of $\cH_\kk$ is
$N(\kk, d)= \frac{2\kk+d-2}{\kk}\binom{\kk+d-3}{\kk-1} \sim \kk^{d-1}$.
Furthermore, the spaces $\cH_\kk$, $\kk=0, 1, \dots$, are orthogonal
and $L^2(\SS) =\bigoplus_{k\ge 0} \cH_k$.

Let $\{Y_{\kk \nu}: \nu=1, \dots, N(\kk, d)\}$ be a real-valued orthonormal basis for $\cH_\kk$.
Then the kernel of the orthogonal projector onto $\cH_\kk$ is given by
\begin{equation}\label{Pk}
Z_\kk(x\cdot y) = \sum_{\nu=1}^{N(\kk, d)} Y_{\kk \nu}(x)Y_{\kk \nu}(y),
\quad x, y\in \SS.
\end{equation}
As is well known (see e.g. \cite[Theorem 1.2.6]{DX})
\begin{equation}\label{def-Pk}
Z_\kk(x\cdot y)= \frac{\kk+\mu}{\mu\omega_d}\,
C^{\mu}_\kk(x\cdot y),
\quad x, y\in \SS,
\quad \mu:=\frac{d-2}{2},~d>2.
\end{equation}
Here $C_\kk^{\mu}$ is the Gegenbauer (ultraspherical) polynomial of degree
$\kk$ normalized by
$C_\kk^{\mu}(1)= \binom{\kk + 2\mu-1}{\kk}$.
The Gegenbauer polynomials are usually defined by the following generating function
\begin{equation*}
(1-2uz+z^2)^{-\mu}=\sum_{\kk=0}^\infty C^{\mu}_\kk(u)z^\kk,\quad |z|<1,~|u|<1.
\end{equation*}
The polynomials $C_\kk^{\mu}$, $k=0, 1, \dots$, are orthogonal in the space $L^2([-1, 1], w)$
with weight $w(u):= (1-u^2)^{\mu-1/2}$,
see \cite[p.~80, (4.7.1)]{Sz} or \cite[Table 18.3.1]{OLBC}.
In the case $d=2$ the kernel of the orthogonal projector onto $\cH_\kk$ takes the form
\begin{equation*}
Z_0(x\cdot y)=\frac{1}{2\pi},\quad 	Z_k(x\cdot y)=\frac{1}{\pi}T_k(x\cdot y),\quad k\ge 1,
\end{equation*}
where $T_k(u):=\cos(n\arccos u)$ is the $k$-th degree Chebyshev polynomial of the first kind.
We refer the reader to \cite{DX, Muller, SW} for the basics of spherical harmonics.

As is well known (see e.g. \cite[Theorem 1.4.5]{DX}) the spherical harmonics are
eigenfunctions of the Laplace-Beltrami operator $\Delta_0$ on $\SS$, namely,
\begin{equation}\label{eq:LB1}
-\Delta_0Y(x)=\kk(\kk+d-2)Y(x),\quad~x\in\SS,~\forall Y\in \cH_\kk.
\end{equation}

The set of all band-limited functions (i.e. spherical polynomials) on $\SS$ of degree $\le N$ will be denoted by $\Pi_N$,
i.e. $\Pi_N :=\bigoplus_{\kk=0}^N \cH_\kk$.

The Poisson kernel on the unit ball $B^d$ is given by
\begin{equation}\label{Poisson}
P(y,x):=\sum_{k= 0}^\infty |x|^k\PP_\kk\Big(\frac{x}{|x|}\cdot y\Big)
=\frac{1}{\omega_d}\frac{1-|x|^2}{|x-y|^d},
\quad |x| <1,\; y\in \SS.
\end{equation}

Kernels on the sphere $\SS$ of the form
\begin{equation}\label{def-Lam}
\Lambda_N(x\cdot y) := \sum_{\kk=0}^\infty
\lambda(\kk/N)\PP_\kk(x\cdot y),
\quad x, y\in \SS,
\quad N\ge 1,
\end{equation}
where $\lambda\in C^\infty[0,\infty)$ is compactly supported,
will play a key role in this article.
Observe that in this case
\begin{equation}\label{def-Lam-2}
\Lambda_N(u) := \sum_{\kk=0}^\infty
\lambda(\kk/N)\PP_\kk(u), \quad u\in[-1, 1],
\end{equation}
is simply a polynomial kernel.
The localization of this kernel is given in the following

\begin{thm}\label{thm:localization}
Let $\nu\ge 0$ and $M\in\NN$.
Assume $\lambda\in C^\infty[0, \infty)$,
$\|\lambda^{(m)}\|_\infty \le \kappa$ for $0\le m\le M$ and for some $b>1$
either $\supp \lambda \subset [(2b)^{-1}, 2b]$ or $\supp \lambda \subset [0, b]$ and $\lambda(t)=1$ for $t\in [0, 1]$.
Then there exists a constant $c>0$ depending only on $M$, $\nu$, b, and $d$ such that
for any $N\ge 1$ the kernel $\Lambda_N$ from $(\ref{def-Lam})-(\ref{def-Lam-2})$
obeys
\begin{equation}\label{local-Lam}
|\Lambda_N^{(\nu)}(\cos \theta)| \le c \kappa \frac{N^{d-1+2\nu}}{(1+N|\theta|)^{M}},
\quad |\theta|\le \pi,
\end{equation}
and hence
\begin{equation}\label{local-Lam-2}
|\Lambda_N^{(\nu)}(x\cdot y)| \le c \kappa \frac{N^{d-1+2\nu}}{(1+N\rho(x, y))^{M}},
\quad x, y\in\SS.
\end{equation}
Furthermore, for $x, y, z\in \SS$
\begin{equation}\label{local-Lip-1}
|\Lambda_N(x\cdot z)-\Lambda_N(y\cdot z)| \le c\kappa \frac{\rho(x, y) N^d}{(1+N\rho(x, z))^{M}},
\quad\hbox{if}\quad  \rho(x, y) \le N^{-1}.
\end{equation}
\end{thm}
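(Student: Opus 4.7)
The estimate \eqref{local-Lam-2} is an immediate consequence of \eqref{local-Lam}: setting $\cos\theta = x\cdot y$ gives $|\theta| = \rho(x,y)$ on $\SS$, and the inequality follows by taking $\nu=0$ (for higher $\nu$ it is not even stated). So the main task is to prove the pointwise bound \eqref{local-Lam} on $[-\pi,\pi]$. I would first reduce the $\nu$-th derivative case to the undifferentiated case by exploiting the classical formula $\frac{d}{du}C_k^\mu(u)=2\mu\, C_{k-1}^{\mu+1}(u)$, which gives
\begin{equation*}
\frac{d^\nu}{du^\nu}\Lambda_N(u) = \sum_{k\ge\nu} \lambda(k/N)\, \alpha_{k,\nu}\, C_{k-\nu}^{\mu+\nu}(u),
\end{equation*}
where $\alpha_{k,\nu}=O(k^{2\nu})$ and $\mu=(d-2)/2$. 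After reindexing this is a kernel of the same type as $\Lambda_N$ but with Gegenbauer parameter $\mu+\nu$ and an extra polynomial factor of weight $N^{2\nu}$, accounting for the prefactor $N^{d-1+2\nu}$ in \eqref{local-Lam}. Thus it suffices to prove the bound for $\nu=0$ with $\mu$ replaced by $\mu+\nu$, i.e.\ to control $\sum_k \lambda(k/N)\, \mu_k\, C_k^{\mu'}(\cos\theta)$ with $\mu_k=O(k^{d-2+2\nu})$ and $\mu'=\mu+\nu$, uniformly in $\theta\in[-\pi,\pi]$.

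The core step is to exchange the polynomial size of the Gegenbauer kernel for decay in $(1+N|\theta|)$ using the smoothness of $\lambda$. My plan is to use the Dirichlet--Mehler type integral representation
\begin{equation*}
C_k^{\mu'}(\cos\theta) = c_{\mu'}\int_0^\theta \frac{\cos((k+\mu')\phi)}{(\cos\phi-\cos\theta)^{1-\mu'}}\,d\phi + (\text{boundary/lower order})
\end{equation*}
(and its analogue when $d=2$ in terms of $\cos(k\phi)$), which converts the sum over $k$ into an integral in $\phi$ of a trigonometric sum
\begin{equation*}
S_N(\phi) := \sum_{k\ge 0} \lambda(k/N)\,\mu_k\,\cos((k+\mu')\phi).
\end{equation*}
Because $\lambda\in C^\infty$ has compact support in either $[(2b)^{-1},2b]$ or $[0,b]$, $M$-fold integration by parts in the frequency variable (equivalently, Abel summation applied $M$ times) yields $|S_N(\phi)|\le c\kappa\, N^{d-1+2\nu}(1+N|\phi|)^{-M}$ for any $M$, with constant controlled by $\|\lambda^{(m)}\|_\infty$ for $m\le M$. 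Substituting this into the integral representation and estimating $\int_0^\theta (\cos\phi-\cos\theta)^{\mu'-1}(1+N|\phi|)^{-M}d\phi$ using $\cos\phi-\cos\theta\sim(\theta-\phi)(\theta+\phi)$ for $0<\phi<\theta\le\pi$, and some care in the cases $N\theta\lesssim 1$ versus $N\theta\gg 1$, produces \eqref{local-Lam}. The main obstacle here is the careful bookkeeping when $\theta$ is close to $\pi$ or when $\mu'<1$, since the integrand has an integrable singularity at $\phi=\theta$ that has to be balanced against the oscillatory decay of $S_N$; this is handled by splitting the $\phi$-integral at $\phi=\theta-1/N$ (or at $\phi=1/N$ when $\theta$ itself is small).

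Finally, the Lipschitz estimate \eqref{local-Lip-1} follows by interpolating along the shorter arc of the great circle from $x$ to $y$. Parametrize $x_t$, $t\in[0,1]$, with $|\dot x_t|\le c\,\rho(x,y)$, and write
\begin{equation*}
\Lambda_N(x\cdot z)-\Lambda_N(y\cdot z) = \int_0^1 \Lambda_N'(x_t\cdot z)\,(\dot x_t\cdot z)\,dt.
\end{equation*}
Applying \eqref{local-Lam-2} with $\nu=1$ gives $|\Lambda_N'(x_t\cdot z)|\le c\kappa N^{d+1}(1+N\rho(x_t,z))^{-M}$. Since $\rho(x,y)\le 1/N$, \eqref{comp_local} shows $(1+N\rho(x_t,z))^{-M}\sim(1+N\rho(x,z))^{-M}$ uniformly in $t$, and the factor $|\dot x_t\cdot z|\le \rho(x,y)$ absorbs one power of $N$ to leave $N^d$, yielding \eqref{local-Lip-1}.
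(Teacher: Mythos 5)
The paper does not prove Theorem~\ref{thm:localization}; it refers to \cite[Theorem 3.5]{NPW1}, \cite[Lemmas 2.4, 2.6]{NPW2} and \cite{IPX}, where the argument works directly on the Gegenbauer series by iterated summation by parts (exploiting the three-term recurrence / Ces\`aro partial sums of $C_k^\mu$). Your plan of converting to a trigonometric sum via the Dirichlet--Mehler integral and then doing $M$-fold Abel summation on $\sum_k\lambda(k/N)\cos((k+\mu')\phi)$ is a genuinely different route; it is plausible, but note that the clean Dirichlet--Mehler formula only holds in the stated form for $0<\mu'<1$, so for general $d$ and $\nu\ge 1$ you would need the generalized (repeatedly integrated-by-parts) version with the boundary terms made explicit, not merely gestured at. Also, in the derivative reduction the coefficient $\alpha_{k,\nu}$ is $O(k)$, not $O(k^{2\nu})$; the extra $N^{2\nu}$ comes from $C_{k-\nu}^{\mu+\nu}(1)\sim k^{2\mu+2\nu-1}$ being larger than $C_k^\mu(1)\sim k^{2\mu-1}$, not from $\alpha_{k,\nu}$. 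The final scaling you claim is correct, but the bookkeeping as written is not.

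There is a genuine gap in the proof of \eqref{local-Lip-1}. Bounding $|\dot x_t\cdot z|\le|\dot x_t|=\rho(x,y)$ and using $|\Lambda_N'(x_t\cdot z)|\le c\kappa N^{d+1}(1+N\rho(x_t,z))^{-M}$, your mean value estimate yields
\begin{equation*}
|\Lambda_N(x\cdot z)-\Lambda_N(y\cdot z)|\le c\kappa\,\rho(x,y)\,N^{d+1}(1+N\rho(x,z))^{-M},
\end{equation*}
which is a factor of $N$ larger than the claimed bound $c\kappa\,\rho(x,y)\,N^{d}(1+N\rho(x,z))^{-M}$. The phrase ``$|\dot x_t\cdot z|\le\rho(x,y)$ absorbs one power of $N$'' conflates two different bounds: $\rho(x,y)N^{d+1}\le N^d$ holds because $\rho(x,y)\le 1/N$, but the statement you are asked to prove retains the factor $\rho(x,y)$ in front of $N^d$. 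To fix this you must use that $\dot x_t$ is tangent to $\SS$ at $x_t$, so $\dot x_t\cdot z=\dot x_t\cdot z_\perp$ where $z_\perp$ is the projection of $z$ onto the tangent plane at $x_t$, whence $|\dot x_t\cdot z|\le\rho(x,y)\,|\sin\rho(x_t,z)|\le\rho(x,y)\,\rho(x_t,z)$. Since $\rho(x_t,z)\le\rho(x,z)+1/N$, the product $N\rho(x_t,z)\le 1+N\rho(x,z)$ costs one power of the decay, giving $c\kappa\,\rho(x,y)\,N^{d}(1+N\rho(x,z))^{-(M-1)}$; since $M$ is arbitrary this yields \eqref{local-Lip-1}. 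Without this refinement the argument establishes only the weaker estimate.
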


For a proof of this theorem in the case $b=2$, see  \cite[Theorem 3.5]{NPW1} and \cite[Lemmas 2.4, 2.6]{NPW2}, also \cite{IPX}.
The proof for a general $b>1$ is similar.

\subsection{Maximal $\delta$-nets, cubature formulas on the sphere and nested sets}\label{s5_3}

For discretization of integrals and construction of frames on $\SS$ we shall need cubature formulas,
which are naturally constructed using maximal $\delta$-nets on $\SS$.

\smallskip

\noindent
{\bf Definition.}
Given $\delta>0$ we say that a finite set $\cZ\subset\SS$ is a maximal $\delta$-net on $\SS$
if
\begin{equation*}
\hbox{(i) $\rho(\zeta_1,\zeta_2)\ge\delta$ for all $\zeta_1,\zeta_2\in\cZ$, $\zeta_1\ne\zeta_2$, \quad and \quad
(ii) $\cup_{\zeta\in\cZ}B(\zeta,\delta)=\SS$.}
\end{equation*}

Clearly, a maximal $\delta$-net $\cZ$ on $\SS$ exists for any $\delta>0$ and $\# \cZ\sim \delta^{-(d-1)}$ for $0<\delta\le\pi$.
For every maximal $\delta$-net $\cZ$
it is easy to construct (see \cite[Proposition 2.5]{CKP}) a corresponding disjoint partition $\{\nA_\zeta\}_{\zeta\in\cZ}$ of $\SS$
consisting of measurable sets such that
\begin{equation}\label{disjoint}
B(\zeta,\delta/2)\subset \nA_\zeta\subset B(\zeta,\delta),\quad \zeta\in\cZ.
\end{equation}
Then the simple cubature formula $\int_\SS f(x)d\sigma(x)\approx \sum_{\zeta\in\cZ} |A_\zeta| f(\zeta)$
will be exact for all constant functions $f$ and $|A_\zeta|\sim \delta^{d-1}$.

\smallskip

\noindent
{\bf Cubature formula on $\SS$.}
In \cite[Theorem 4.3]{NPW1} it is shown that there exists a constant $\bar{\gamma}$ ($0<\bar{\gamma} <1$), depending only on $d$,
such that for any $0<\gamma\le\bar{\gamma}$, $0<\delta\le 1$, and a maximal $\gamma\delta$-net $\cZ\subset\SS$
there exist weights $\{\ww_\xi\}_{\xi\in\cZ}$, satisfying
\begin{equation}\label{cubature_w}
c^{-1} (\gamma\delta)^{d-1}\le \ww_\xi\le c (\gamma\delta)^{d-1}, \quad \xi\in\cZ,
\end{equation}
with constant $c>0$ depending only on $d$, such that the cubature formula
\begin{equation}\label{cubature}
\int_\SS f(x)d\sigma(x)\approx \sum_{\xi\in\cZ} \ww_\xi f(\xi)
\end{equation}
is exact for all spherical harmonics $f$ of degree $\le \delta^{-1}$.
The number of nodes in $\cZ$ is $\le c(d)(\gamma\delta)^{-(d-1)}$.

\smallskip

\noindent
{\bf Nested structure on $\SS$.}
As will be seen
any selection of a disjoint partition $\{\nA_\zeta\}$ with property \eqref{disjoint}  works well for
the construction of frames from shifts of the Newtonian kernel in the Triebel-Lizorkin spaces $\cF^{sq}_p(\SS)$ with $p<\infty$.
The construction of frames in the Triebel-Lizorkin spaces $\cF^{sq}_p(\SS)$ with $p=\infty$ will in addition require
the family of partitions of $\SS$ to form a \emph{nested structure} similar to the system of dyadic cubes in $\RR^d$.
%Such a structure is obtained in the next theorem by virtue of
%nonessential modification of the left- and right-hand bounds in \eqref{disjoint} as shown in \eqref{dyadic_eq:4}.
It is given in the following

\begin{thm}\label{nested_sets}
Let $b>3$, $\betaw>0$ satisfy
\begin{equation}\label{dyadic_condition}
\frac{1}{b-1}+2\betaw\le\frac{1}{2}.
\end{equation}
Let $d\ge 2$, $\gamma>0$, and assume that $\{\cX_n\}_{n\ge 1}$ is a sequence of maximal $\delta_n$-nets on $\SS$ with $\delta_n:=\gamma b^{-n}$.
Then for every $n\in\NN$ there exists a collection of open subsets $\{Q_\xi\subset\SS : \xi\in\cX_n\}$ with the following properties:
\begin{align}
&\big|\SS\backslash\bigcup_{\xi\in\cX_n}Q_\xi\big|=0,\quad\forall n\in\NN;\label{dyadic_eq:1}\\
&\mbox{If}~\eta\in\cX_m, \xi\in\cX_n,~\mbox{and}~m\ge n,~\mbox{then either}~Q_\eta\subset Q_\xi~\mbox{or}~Q_\eta\cap Q_\xi=\emptyset;\label{dyadic_eq:2}\\
&\mbox{If}~\xi\in\cX_n~\hbox{and}~m< n,~\mbox{then there is a unique}~\eta\in\cX_m~\mbox{such that}~Q_\xi\subset Q_\eta;\label{dyadic_eq:3}\\
&B(\xi,\betaw\gamma b^{-n})\subset Q_\xi\subset B\Big(\xi,\frac{b}{b-1} \gamma b^{-n}\Big)~
\mbox{for all}~\xi\in\cX_n, n\in\NN.\label{dyadic_eq:4}
\end{align}
\end{thm}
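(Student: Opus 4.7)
The plan is to use Christ's dyadic-cube construction on spaces of homogeneous type, adapted to the sphere with scale factor $b$ from the hypothesis.

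First I fix a parent map. For each $n\ge 2$ and each $\xi\in\cX_n$, the covering property of the maximal $\delta_{n-1}$-net $\cX_{n-1}$ produces some $\eta\in\cX_{n-1}$ with $\rho(\xi,\eta)<\delta_{n-1}$; pick one, call it $\pi(\xi)$, and iterate to obtain an ancestor $\xi^{(m)}\in\cX_m$ for every $m\le n$ and a forest structure on $\bigsqcup_n\cX_n$. Writing $R_n:=\tfrac{b}{b-1}\gamma b^{-n}$ for the target outer radius, direct computation gives the telescoping identity $R_n+\delta_{n-1}=R_{n-1}$, so the outer balls nest along the parent chain, $B(\xi,R_n)\subset B(\pi(\xi),R_{n-1})$, and iteratively $\rho(\eta,\xi)<\sum_{k=n}^{N-1}\delta_k<R_n$ for any descendant $\eta\in\cX_N$ of $\xi\in\cX_n$. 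This will supply the outer inclusion in \eqref{dyadic_eq:4}.

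The cells will be built along the tree structure in a bottom-up manner: for each $x\in\SS$ (outside a null set) I associate a canonical ancestor $\xi_n(x)\in\cX_n$ at every level by taking the iterated $\pi$-ancestor of the closest $\cX_N$-point to $x$ at a sufficiently deep level $N$, and set $Q_\xi:=\{x:\xi_n(x)=\xi\}$. Nesting \eqref{dyadic_eq:2}-\eqref{dyadic_eq:3} is built in, and the outer inclusion $Q_\xi\subset B(\xi,R_n)$ follows from $\rho(x,\eta)<\delta_N$ at deep level $N$ combined with the ancestor distance bound. A naive top-down refinement (recursively subdividing each parent cell among its children by a closest-child rule) would instead fail the outer inclusion, since the closest child of $\pi(\xi)$ to a point $x\in Q_{\pi(\xi)}$ can sit as far as $R_{n-1}+\delta_{n-1}=R_{n-2}$ away, much larger than $R_n$.

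The main obstacle will be the inner containment $B(\xi,\betaw\gamma b^{-n})\subset Q_\xi$, since the inner ball at level $n$ is not contained in the inner ball of the parent and may straddle boundaries of parent cells. I will prove it by induction on $n$, showing that for any $x\in B(\xi,\betaw\delta_n)$ and any sufficiently deep $N\ge n$, the closest $\cX_N$-point to $x$ is a descendant of $\xi$. The hypothesis $\tfrac{1}{b-1}+2\betaw\le\tfrac{1}{2}$ is exactly the geometric slack that makes this induction close: rewritten as $R_n+2\betaw\delta_{n-1}\le\tfrac{1}{2}\delta_{n-1}$, it bundles the outer radius at level $n$ together with a $2\betaw\delta_{n-1}$-margin around the inner ball at level $n-1$ into less than half the parent spacing, preventing any point of an inner ball from flipping its ancestor chain as the scale $N$ increases. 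Pinning down this propagation uniformly in $N$, along with the tie-breaking needed for openness and for the null-set covering in \eqref{dyadic_eq:1}, is the central technical difficulty.
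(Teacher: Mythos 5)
Your plan follows the same Christ-type construction as the paper's sketch, and your telescoping identity $R_n+\delta_{n-1}=R_{n-1}$ together with the rewriting $R_{n+1}+2\betaw\delta_n\le\tfrac12\delta_n$ of \eqref{dyadic_condition} are exactly the estimates used there. However, there is a gap in your parent map that breaks the inner containment in \eqref{dyadic_eq:4}. You define $\pi(\xi)$ by picking \emph{any} $\eta\in\cX_{n-1}$ with $\rho(\xi,\eta)<\delta_{n-1}$, and this freedom is too much. Your own argument for the inner inclusion runs: for $x\in B(\xi,\betaw\delta_n)$ and $\delta_N<\betaw\delta_n$, let $\zeta'\in\cX_{n+1}$ be the level-$(n+1)$ ancestor of the nearest $\cX_N$-point to $x$; then $\rho(\xi,\zeta')<\betaw\delta_n+\delta_N+R_{n+1}<2\betaw\delta_n+R_{n+1}\le\tfrac12\delta_n$, so $\zeta'$ lies within half the level-$n$ spacing of $\xi$. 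To conclude $\pi(\zeta')=\xi$ you must have built into the parent map the priority rule: whenever $\zeta'\in\cX_{n+1}$ satisfies $\rho(\zeta',\xi)<\tfrac12\delta_n$ for some $\xi\in\cX_n$ (such $\xi$ is automatically unique by the $\delta_n$-separation), set $\pi(\zeta')=\xi$. An unconstrained choice can violate this: one can have $\zeta'\in\cX_{n+1}$ with $\rho(\zeta',\xi)<\betaw\delta_n$ and simultaneously $\rho(\zeta',\eta)<\delta_n$ for a second $\eta\in\cX_n$ with $\delta_n\le\rho(\xi,\eta)<(1+\betaw)\delta_n$; assigning $\pi(\zeta')=\eta$ then pushes a whole neighborhood of $\zeta'$, which lies in $B(\xi,\betaw\delta_n)$, out of $Q_\xi$. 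The paper's sketch states precisely this priority (its case (a) at distance $<\delta_n/2$, case (b) otherwise); your plan must incorporate the same rule.

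Beyond that, your cell definition $Q_\xi:=\{x:\xi_n(x)=\xi\}$ (fibers of a stabilized ancestor map built from deep nets) is structurally harder than the paper's $Q_\xi:=\bigcup_{\eta\prec\xi}B(\eta,\betaw\gamma b^{-m})$ (union over all descendants $\eta\in\cX_m$). Your version makes same-level disjointness automatic, but shifts the burden to (i) proving that the level-$n$ ancestor of the nearest $\cX_N$-point stabilizes as $N\to\infty$ for a.e.\ $x$, and (ii) obtaining open sets — both of which you flag as ``the central technical difficulty''. The paper's union-of-balls definition is open by construction, makes the inner inclusion trivial (take $\eta=\xi$), gets the outer inclusion from the same telescoping bound, and concentrates all of the work into a single same-level disjointness check, which uses $2\betaw\delta_m+R_{m+1}\le\tfrac12\delta_m$ together with the case-(a) priority in a way dual to your inner-ball argument. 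Since both routes ride on exactly the same inequality and the same priority rule, I would switch to the paper's inside-out cell definition: it eliminates the stabilization and openness issues outright.
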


The proof of \thmref{nested_sets} follows the proof of \cite[Theorem 11, (3.1)--(3.5)]{Ch}
with the trivial modification required by replacement of the maximal $b^{-n}$-net with a maximal $\gamma b^{-n}$-net.
Also, \cite[Theorem 11]{Ch} is stated for sufficiently small $b^{-1}$ and $\betaw$,
while condition \eqref{dyadic_condition} is derived from its proof.
This condition should be strengthened if one would require
an additional property of ``small boundary'' for the sets $Q_\xi$ (see \cite[Theorem 11, (3.6)]{Ch}).
We shall not need the ``small boundary'' property here.
Note that \cite[Theorem 11]{Ch} is stated for general spaces of homogeneous type
of which the sphere $\SS$ equipped with the geodesic distance and Lebesgue measure is a particular case.

\begin{proof}[Sketch of proof.]
The proof is carried out by completing the following steps.

\smallskip

$(1)$
Partial ordering.
For every $n\in\NN$ the set $\cX_n$ is a maximal $\delta$-net, $\delta:=\gamma b^{-n}$, on $\SS$, 
which implies $\cup_{\xi\in\cX_n}B(\xi,\gamma b^{-n})=\SS$ and the sets $\{B(\xi,\gamma b^{-n}/2) : \xi\in\cX_n\}$ are mutually disjoint.
A partial ordering of the points in $\cup_{n\ge 1}\cX_n$ is constructed as follows:
\begin{enumerate}
\renewcommand{\labelenumi}{(\alph{enumi})}
\item If $\eta\in\cX_{n+1}$ and $\rho(\eta,\xi)<\gamma b^{-n}/2$ for some $\xi\in\cX_n$ then we set $\eta\prec\xi$. In this cases $\xi$ is unique.
\item If $\eta\in\cX_{n+1}$, $\eta\notin\cup_{\zeta\in\cX_n}B(\zeta,\gamma b^{-n}/2)$, and $\rho(\eta,\xi)<\gamma b^{-n}$ for some $\xi\in\cX_n$
then we set $\eta\prec\xi$. The choice of $\xi$ in this cases may not be unique.
\item Finally, $\prec$ is extended by transitivity.
\end{enumerate}

\smallskip

$(2)$
The open sets $Q_\xi$, $\xi\in\cX$, are defined by
\begin{equation*}
Q_\xi:=\cup_{\eta\prec\xi}B(\eta,\betaw\gamma /N_\eta),
\end{equation*}
where $\betaw$ is from \eqref{dyadic_condition} and $N_\eta=b^{k}$ for $\eta\in\cX_k$.

\smallskip

$(3)$
If $\eta\prec\xi$ then $\rho(\xi,\eta)\le\frac{b}{b-1}\gamma /N_\xi$.

\smallskip

$(4)$
If $\xi,\eta\in\cX_n$ and $Q_\xi\cap Q_\eta\ne\emptyset$ then $\xi=\eta$. Condition \eqref{dyadic_condition} is used in this step.
\end{proof}

\begin{rem}\label{rem:1}
Condition \eqref{dyadic_condition} is satisfied by $b=4$ and $\betaw=1/12$ -- a choice we shall employ in the paper.
Note that neither \eqref{dyadic_condition} nor the conclusions in \eqref{dyadic_eq:1}--\eqref{dyadic_eq:3} depends on $\gamma$
$($after the choosing of the maximal $\delta_n$-nets $\cX_n$$)$.
\end{rem}

\begin{rem}\label{rem:2}
A partial ordering in $\cX$ is implied by \thmref{nested_sets}, namely,
\begin{equation}\label{partial_order}
\eta\prec\xi\Leftrightarrow Q_\eta\subset Q_\xi.
\end{equation}
Given the collection of nodes $\cX=\cup_{n=1}^{\infty}\cX_n$ the partial ordering is not unique for a fixed $\betaw$.
But all possible partial orderings obey \thmref{nested_sets} and \propref{prop:nested_sets} bellow.
\end{rem}

\begin{rem}\label{rem:3}
For the rest of the paper we fix the numbers $b, \betaw$ satisfying \eqref{dyadic_condition} and $\bar{\gamma}$
from the nontrivial cubature formula \eqref{cubature}. Then we choose
\begin{equation}\label{index_set}
\begin{split}
\cX_j ~\text{to be a maximal}~ \delta_j\text{-net on}~ \SS,~ j\in\NN,\; \delta_j:=\bar{\gamma}2^{-1} b^{-j},
\\
\cX_0=\{e_1\},~e_1=(1,0,\dots,0);\quad \cX=\cup_{j=0}^\infty\cX_j.
\end{split}
\end{equation}
For the set $\cX$ we assume that equal points from different sets $\cX_j$
are distinct points in $\cX$ so that $\cX$ can be used as an \emph{index set}.

The partial ordering is trivially extended from $\cup_{j=1}^\infty\cX_j$ to $\cX$ by setting
$Q_{e_1}=\SS$ and, hence, $\xi\prec e_1$ for all $\xi\in\cX_1$.

Note that the choice of $\bar{\gamma}$ allows every $\cX_j$, $j\in\NN$, to be a nodal set
for a cubature formula on $\SS$ that is exact for the spherical polynomials of degree $\le 2b^j$.
Similarly, $\cX_0$ will be a nodal set
for a cubature formula on $\SS$ exact for the spherical polynomials of degree $0$
if we set $\ww_{e_1}:=\omega_d$.
\end{rem}

\begin{prop}\label{prop:nested_sets}
Under the assumption of \thmref{nested_sets} we have:

$(1)$
For any $m\in\NN$ we have
\begin{equation}\label{number_of_children}
\cN_m := \max_{n\in \NN,~\xi\in\cX_n} \#\{Q_\eta\subset Q_\xi : \eta\in Q_{n+m}\}
\le\left(\frac{b}{\betaw(b-1)}\right)^{d-1} b^{m(d-1)}.
\end{equation}
%i.e. the number of children, grand children, etc. for every node are uniformly bounded by $\cN_m$.

\smallskip

$(2)$
If $b\ge 4$ then every node has at least two children.

\smallskip

$(3)$
For any $\xi\in\cX_n$ and $k\in\NN$
\begin{equation}\label{dyadic_eq:5}
|Q_\xi|=\sum_{\eta\in\cX_{n+k},~Q_\eta\subset Q_\xi}|Q_\eta|.
\end{equation}

$(4)$
For any $\xi\in\cX_n$ and $n\in\NN_0$
\begin{equation}\label{dyadic_eq:6}
|Q_\xi|\sim b^{-n(d-1)}
\end{equation}
with constants of equivalence depending only on $d, b, \betaw, \gamma$.

\end{prop}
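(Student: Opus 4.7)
All four parts rely on the geometric control of the $Q_\xi$'s afforded by \eqref{dyadic_eq:4} together with the volume estimate \eqref{sph_cap}, plus, in the case of (2), a direct inspection of the partial ordering rules (a)--(c) used in the proof of \thmref{nested_sets}.

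For (3), which is the cleanest, I would start from \eqref{dyadic_eq:1}: the set $\SS\setminus\bigcup_{\eta\in\cX_{n+k}}Q_\eta$ has measure zero, and the family $\{Q_\eta\}_{\eta\in\cX_{n+k}}$ is mutually disjoint by \eqref{dyadic_eq:2}. By \eqref{dyadic_eq:3} every $\eta\in\cX_{n+k}$ has a unique ancestor in $\cX_n$, so the restrictions $\{Q_\eta\}_{\eta\prec\xi,\,\eta\in\cX_{n+k}}$ partition $Q_\xi$ up to a null set, yielding \eqref{dyadic_eq:5}. For (4), the upper bound comes from $Q_\xi\subset B(\xi,\frac{b}{b-1}\gamma b^{-n})$ and the lower from $Q_\xi\supset B(\xi,\betaw\gamma b^{-n})$, combined with $|B(\xi,r)|\sim r^{d-1}$ from \eqref{sph_cap}; this uses that $\gamma b^{-n}\le \pi$ holds with the calibration chosen in Remark~\ref{rem:3}.

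For (1), I would argue by volume comparison. Fix $\xi\in\cX_n$ and $m\ge 1$; the descendants $\eta\in\cX_{n+m}$ with $Q_\eta\subset Q_\xi$ have pairwise disjoint $Q_\eta$'s by \eqref{dyadic_eq:2}, each contained in $Q_\xi$. By \eqref{dyadic_eq:4},
\begin{equation*}
|Q_\eta|\ge |B(\eta,\betaw\gamma b^{-(n+m)})|,\qquad |Q_\xi|\le |B(\xi,\tfrac{b}{b-1}\gamma b^{-n})|.
\end{equation*}
Applying part (4) (or directly \eqref{sph_cap}), both sides are comparable to $b^{-(n+m)(d-1)}$ and $b^{-n(d-1)}$ respectively, so dividing $|Q_\xi|$ by $|Q_\eta|$ produces the constant $(b/(\betaw(b-1)))^{d-1}$ times $b^{m(d-1)}$, which is exactly the bound in \eqref{number_of_children}.

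Part (2) is the one requiring care. Fix $\xi\in\cX_n$; I need two distinct $\eta_1,\eta_2\in\cX_{n+1}$ with $\eta_i\prec\xi$. Since $\cup_{\eta\in\cX_{n+1}}B(\eta,\gamma b^{-(n+1)})=\SS$, there is at least one $\eta_1\in\cX_{n+1}$ with $\rho(\eta_1,\xi)<\gamma b^{-(n+1)}$. For $b\ge 4$, $\gamma b^{-(n+1)}\le\gamma b^{-n}/4 < \gamma b^{-n}/2$, so the partial ordering rule (a) applies and $\eta_1\prec\xi$. To produce a second child, pick a point $\zeta\in\SS$ with $\rho(\xi,\zeta)=\gamma b^{-n}/2$ (available since this distance is $\le\pi$) and let $\eta_2\in\cX_{n+1}$ satisfy $\rho(\eta_2,\zeta)<\gamma b^{-(n+1)}$. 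Then
\begin{equation*}
\rho(\eta_2,\xi)\le \rho(\eta_2,\zeta)+\rho(\zeta,\xi)<\gamma b^{-(n+1)}+\tfrac{1}{2}\gamma b^{-n}<\gamma b^{-n},
\end{equation*}
so $\eta_2\prec\xi$ (by rule (a) if $\rho(\eta_2,\xi)<\gamma b^{-n}/2$, otherwise by rule (b)). Finally, if $\eta_2=\eta_1$, then $\rho(\xi,\zeta)\le \rho(\xi,\eta_1)+\rho(\eta_1,\zeta)<2\gamma b^{-(n+1)}\le\gamma b^{-n}/2$, contradicting $\rho(\xi,\zeta)=\gamma b^{-n}/2$. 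Hence $\eta_1\ne \eta_2$, as required.

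\emph{Main obstacle.} Parts (1), (3), (4) are essentially accounting with \eqref{dyadic_eq:1}--\eqref{dyadic_eq:4} and \eqref{sph_cap}. The only genuinely delicate step is (2): the existence of a \emph{second} child hinges on the condition $b\ge 4$ through the inequality $2b^{-1}\le 1/2$, and one must navigate the non-unique assignment in rule (b) of the ordering. Arguing via a boundary point at exactly distance $\gamma b^{-n}/2$ and using the triangle inequality to exclude coincidence of the two chosen covers is the cleanest way around this.
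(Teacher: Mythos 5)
Parts (1), (3), (4) of your proof are correct and essentially identical to the paper's arguments: volume comparison via \eqref{dyadic_eq:4} and \eqref{sph_cap2} for (1), the partition-up-to-null-sets reasoning from \eqref{dyadic_eq:1}--\eqref{dyadic_eq:3} for (3), and the sandwich of balls from \eqref{dyadic_eq:4} together with \eqref{sph_cap} for (4). You correctly flag that part (2) is the delicate one, but your argument for (2) has a genuine gap.

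The problem is your choice of $\zeta$ with $\rho(\xi,\zeta)=\gamma b^{-n}/2=:\delta/2$. The resulting $\eta_2\in\cX_{n+1}$ with $\rho(\eta_2,\zeta)<\delta/b$ can satisfy $\rho(\eta_2,\xi)\ge\delta/2$, in which case rule (a) does \emph{not} yield $\eta_2\prec\xi$. You then invoke rule (b), but rule (b) does not automatically assign $\eta_2$ to $\xi$: first, its precondition $\eta_2\notin\cup_{\xi'\in\cX_n}B(\xi',\delta/2)$ can fail — there can be a second node $\xi'\in\cX_n$ at distance roughly $\delta$ from $\xi$ with $\rho(\eta_2,\xi')<\delta/2$, in which case rule (a) forces $\eta_2\prec\xi'$, not $\eta_2\prec\xi$; and second, even when rule (b) does apply, the paper explicitly notes the choice of parent there ``may not be unique,'' so $\eta_2$ may be assigned to a competing node with $\rho(\eta_2,\xi')<\delta$. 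Either way, $\eta_2\prec\xi$ is not guaranteed. The paper sidesteps this entirely by staying strictly inside $B(\xi,\delta/2)$: it observes that if $B(\eta,\delta/b)$ meets $\overline{B(\xi,\delta/4)}$ then $\rho(\eta,\xi)<\delta/4+\delta/b\le\delta/2$ (here $b\ge 4$ is used), so any such $\eta$ is a child of $\xi$ by rule (a) alone, and then notes that $\overline{B(\xi,\delta/4)}$ cannot be covered by a single $B(y,\delta/b)$, so at least two such $\eta$'s must exist. You need a replacement of this kind for your part (2); the route through a point at distance exactly $\delta/2$ from $\xi$ cannot be made to work because it exits the region where rule (a) is decisive.
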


\begin{proof}
Inequality \eqref{number_of_children} follows from \eqref{dyadic_eq:2}, \eqref{dyadic_eq:4}, and \eqref{sph_cap2} because
\begin{equation*}
\cN_m \le \Big|B\Big(\xi,\frac{b}{b-1} \gamma b^{-n}\Big)\Big|\Big/|B(\eta,\betaw \gamma b^{-n-m})|.
\end{equation*}

For the proof of part (2) we fix $\xi\in\cX_n$ and set $\delta:=\gamma b^{-n}$.
Then $\cX_{n}$ is a maximal $\delta$-net and $\cX_{n+1}$ is a maximal $\delta/b$-net.
In view of Step (1) from the proof of \thmref{nested_sets} every $\eta\in\cX_{n+1}$ satisfying $\eta\in B(\xi,\delta/2)$ is by definition a child of $\xi$.
If $B(\eta,\delta/b)\cap\overline{B(\xi,\delta/4)}\ne\emptyset$ for some $\eta\in\SS$, then $\eta\in B(\xi,\delta/2)$.
Finally, observing that the set $\overline{B(\xi,\delta/4)}$ cannot be covered by a single ball of the type $B(y,\delta/b)$
and $\cup_{\eta\in\cX_{n+1}}B(\eta,\delta/b)=\SS$
we conclude that $\xi$ has at least two children.

Identity \eqref{dyadic_eq:5} follows from \eqref{dyadic_eq:1}--\eqref{dyadic_eq:3},
while \eqref{dyadic_eq:6} follows from \eqref{dyadic_eq:4} and \eqref{sph_cap}.
\end{proof}

For future references we state the following simple lemma with the notation from above.
It is proved in \cite[Lemma 9.2]{IP2} for $b=2$.
The proof for a general $b>1$ is the same.
\begin{lem}\label{lem:Besov_1}
Let $j,m\ge 0$, $0<\kappa\le 1$. Then
\begin{equation}\label{eq:besov_0}
\sum_{\eta\in\cX_{j+m}} \frac{1}{(1+b^j \rho(x,\eta))^{d-1+\kappa}}\le c b^{m(d-1)},
\quad x\in\SS,
\end{equation}
with $c=c(d,b)\kappa^{-1}$.
\end{lem}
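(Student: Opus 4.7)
The plan is to prove the bound by splitting the sum over $\cX_{j+m}$ into annular shells around $x$ of geodesic radius proportional to $b^{-j}$, and counting the number of net points in each shell using the separation property of a maximal $\delta_{j+m}$-net.

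First, I would set $r := b^{-j}$ and define dyadic shells (with ratio $b$ for notational cleanliness):
\begin{equation*}
A_0 := \{\eta\in\SS : \rho(x,\eta)\le r\},\qquad
A_\ell := \{\eta\in\SS : b^{\ell-1} r < \rho(x,\eta)\le b^\ell r\},\ \ \ell\ge 1.
\end{equation*}
Since $\rho(x,\cdot)\le\pi$, only the shells with $\ell\le L$ are nonempty, where $L$ is the smallest integer with $b^L r\ge \pi$. On $A_\ell$ (for $\ell\ge 1$) we have $1+b^j\rho(x,\eta)\sim b^\ell$, and on $A_0$ the factor $(1+b^j\rho(x,\eta))^{-(d-1+\kappa)}$ is just bounded by $1$.

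Next I would bound $\#(\cX_{j+m}\cap A_\ell)$. Because $\cX_{j+m}$ is a maximal $\delta_{j+m}$-net with $\delta_{j+m}=\bar\gamma 2^{-1}b^{-(j+m)}$, the balls $\{B(\eta,\delta_{j+m}/2):\eta\in\cX_{j+m}\}$ are pairwise disjoint. For $\eta\in A_\ell$ each such ball lies in $B(x,b^\ell r+\delta_{j+m})\subset B(x,2 b^\ell r)$, so by \eqref{sph_cap} (and \eqref{sph_cap2})
\begin{equation*}
\#(\cX_{j+m}\cap A_\ell)\ \le\ \frac{|B(x,2 b^\ell r)|}{|B(\eta,\delta_{j+m}/2)|}\ \le\ c\,\frac{(b^\ell b^{-j})^{d-1}}{b^{-(j+m)(d-1)}}\ =\ c\, b^{(\ell+m)(d-1)},
\end{equation*}
and similarly $\#(\cX_{j+m}\cap A_0)\le c\,b^{m(d-1)}$.

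Combining, the sum to be estimated is
\begin{equation*}
\sum_{\ell=0}^{L}\sum_{\eta\in\cX_{j+m}\cap A_\ell}\frac{1}{(1+b^j\rho(x,\eta))^{d-1+\kappa}}
\ \le\ c\,b^{m(d-1)}+c\sum_{\ell=1}^{L} b^{(\ell+m)(d-1)}\,b^{-\ell(d-1+\kappa)},
\end{equation*}
and the right-hand side equals
\begin{equation*}
c\,b^{m(d-1)}\Big(1+\sum_{\ell=1}^{L}b^{-\ell\kappa}\Big)\ \le\ c\,b^{m(d-1)}\cdot\frac{1}{1-b^{-\kappa}}\ \le\ \frac{c(d,b)}{\kappa}\,b^{m(d-1)},
\end{equation*}
where in the last step I use $1-b^{-\kappa}\ge \kappa\ln b/(1+\kappa\ln b)\gtrsim \kappa$ uniformly in $\kappa\in(0,1]$. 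This yields the claimed bound with the explicit $\kappa^{-1}$ dependence.

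This proof is routine; the only minor subtlety is keeping the $\kappa^{-1}$ constant correct, which comes entirely from the geometric series $\sum b^{-\ell\kappa}$. No new ideas beyond standard net-counting are required, which is why the earlier version for $b=2$ (\cite[Lemma 9.2]{IP2}) transfers verbatim.
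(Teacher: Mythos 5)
Your proof is correct, and it uses the standard shell-counting argument (decompose into annuli of geodesic radius $\sim b^{\ell-j}$, count net points by volume comparison via the disjointness of $\delta/2$-balls, then sum a geometric series) that the paper is implicitly invoking when it refers to \cite[Lemma 9.2]{IP2} and remarks that the proof for general $b>1$ is the same as for $b=2$. The one slightly delicate point, the explicit $\kappa^{-1}$ dependence via $1-b^{-\kappa}\gtrsim\kappa$, is handled correctly.
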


\section{Spaces of functions and distributions on the sphere}\label{s2}

Besov and Triebel-Lizorkin spaces on the sphere are developed in \cite{NPW2}
using the approach in the classical case on $\RR^d$ from \cite{Peetre, Triebel-1, FJ1, FJ2, FJW}.
Here we recall the basics of this theory.
In particular, we recall the frame characterization of these spaces and
the construction of frames in terms of shifts of Newtonian kernels from \cite{IP2}.

\subsection{Besov and Triebel-Lizorkin spaces on $\SS$}\label{s2_2}

The Besov and Triebel-Lizorkin spaces on $\SS$ in general are spaces of distributions.
The class of test functions on $\SS$ is defined by $\cS:= C^\infty(\SS)$.
To be more specific recall the definition of $C^\infty(\SS)$.
Given a function $\phi$ on $\SS$ we denote by $E\phi$ its extension to $\RR^d\setminus\{0\}$ 
defined by $E\phi(x):=\phi(x/|x|)$. 
Then $\partial^\beta \phi := \partial^\beta(E\phi)|_{\SS}$
and $C^\infty(\SS)$ is the set of all functions $\phi$ on $\SS$ such that
$\|\partial^\beta \phi\|_{L^\infty(\SS)}<\infty$ for all $\beta\in\NN_0^d$.  
The topology on $\cS$ is standardly defined by the semi-norms
\begin{equation*}
\tilde{P}_m(\phi) := \sum_{|\beta|=m} \|\partial^\beta \phi\|_{L^\infty(\SS)}, \quad m=0,1, \dots.
\end{equation*}
As is well known the topology on $\cS$ can be equivalently defined by the semi-norms
\begin{equation*}
\tilde{\tilde{P}}_m(\phi) := \|\Delta_0^m \phi\|_{L^\infty}, \quad m=0,1, \dots,
\end{equation*}
where $\Delta_0$ is the Laplace-Beltrami operator on $\SS$.
In turn this readily implies that a function $\phi\in\cS$ if and only if
\begin{equation*}
\|\PP_\kk*\phi\|_2 \le c(\phi, m)(1+\kk)^{-m}, \quad \forall \kk, m\ge 0.
\end{equation*}
Recall that the convolution $\PP_\kk*\phi$ is defined in \eqref{def-conv}.
Therefore, the topology on $\cS$ is also equivalently defined by the norms
\begin{equation}\label{test_norms}
P_m(\phi):= \sum_{\kk=0}^\infty (\kk+1)^m \|\PP_\kk*\phi\|_2
= \sum_{\kk=0}^\infty (\kk+1)^m \Big(\sum_{\nu=1}^{N(\kk, d)} |\langle \phi, Y_{\kk \nu}\rangle|^2\Big)^{1/2}.
\end{equation}
Note that $\cS$ is complete in this topology.

Observe also that all $Y_{\kk \nu}\in\cS$ and hence by (\ref{Pk}) $\PP_\kk(x\cdot y)\in \cS$
as a function of $x$ for every fixed $y$ and as function of $y$ for every fixed $x$.

The space $\cS':=\cS'(\SS)$ of distributions on $\SS$ is defined as the space of
all continuous linear functionals on $\cS$.
The pairing of $f\in \cS'$ and $\phi\in\cS$ will be denoted by
$\langle f, \phi\rangle := f(\overline{\phi})$, which is consistent with the inner product
on $L^2(\SS)$.
More precisely, $\cS'$ consists of all linear functionals $f$ on $\cS$ for which
there exist constants $c>0$ and $m\in\NN_0$ such that
\begin{equation}\label{def-distr}
|\langle f, \phi\rangle| \le c P_m(\phi),\quad \forall \phi\in\cS.
\end{equation}
For any $f\in\cS'$ we define $\PP_\kk*f$ by
\begin{equation}\label{def-P-f}
\PP_\kk*f(x) :=\langle f, \overline{\PP_\kk(x\cdot\bullet)}\rangle
=\langle f, \PP_\kk(x\cdot\bullet)\rangle,
\end{equation}
where on the right $f$ is acting on $\overline{\PP_\kk(x\cdot y)}=\PP_\kk(x\cdot y)$ as a function of $y$
($\PP_k$ is real-valued).

Observe that the representation
\begin{equation}\label{represent-f}
f=\sum_{\kk=0}^\infty \PP_\kk*f, \quad \forall f\in\cS'
\end{equation}
holds with convergence in distributional sense.

\begin{defn}\label{def:B-F-spaces}
Let $s\in\RR$, $0<q\le \infty$, $b>1$, and
$\varphi$ satisfy the conditions:
\begin{equation}\label{adm}
\varphi \in C^\infty(\RR_+),~~
\supp \varphi \subset [b^{-1},b],~~
|\varphi(u)|\ge c>0~\mbox{for}~u\in [b^{-2/3},b^{2/3}].
\end{equation}
For a distribution $f\in \cS'$ set
\begin{equation}\label{rep-Phi-j-f}
\Phi_0*f := \PP_0*f,\quad
\Phi_j*f := \sum_{\kk=0}^\infty \varphi\Big(\frac{\kk}{b^{j-1}}\Big)\PP_\kk*f, ~ j\ge 1,
\end{equation}
where $\PP_k*f$ is defined in \eqref{def-P-f}.

$(a)$
The Besov space $\cB^{sq}_p:=\cB^{sq}_p(\SS)$, $0<p\le \infty$, is defined as the set of all distributions
$f\in \cS'$ such that
\begin{equation}\label{B-norm}
\|f\|_{\cB^{s q}_p} :=
\Big(\sum_{j=0}^\infty \Big(b^{sj}\|\Phi_j*f\|_{L^p(\SS)}\Big)^q\Big)^{1/q}
< \infty,
\end{equation}
where the $\ell^q$-norm is replaced by the sup-norm if $q=\infty$.

$(b)$
The Triebel-Lizorkin space $\cF^{sq}_p:=\cF^{sq}_p(\SS)$, $0<p<\infty$, is defined as the set of all distributions
$f\in \cS'$ such that
\begin{equation}\label{F-norm}
\|f\|_{\cF^{s q}_p} :=
\Big\|\Big(\sum_{j=0}^\infty \big(b^{sj}|\Phi_j*f(\cdot)|\big)^q\Big)^{1/q}\Big\|_{L^p(\SS)}
< \infty,
\end{equation}
where the $\ell^q$-norm is replaced by the sup-norm if $q=\infty$.
\end{defn}

Note that the definitions of the Besov and Triebel-Lizorkin spaces above are independent of the choice of $b>1$
and of the particular selection of the function $\varphi$ with the required properties, that is,
different $b$'s and $\varphi$'s produce equivalent quasi-norms (see \propref{prop:independ2}).

\subsection{Needlet frame decomposition of spaces of distributions on $\SS$}\label{subsec:frame-SS}

We next recall the construction of the frame (needlets) on $\SS$
from \cite{NPW2}.
Note that in dimension $d=2$ the Meyer's periodic wavelets (see \cite{Meyer})
form a basis with the desired properties.

Let $b>1$ be fixed.
The first step in the construction of needlets on $\SS$, $d\ge 2$, is the selection of
a real-valued function $\aa\in C^\infty(\RR_+)$ with the properties:
$\supp \aa\subset [b^{-1},b]$, $0\le \aa \le 1$, $\aa(u)\ge c>0$ for $u\in[b^{-2/3},b^{2/3}]$,
$\aa^2(u)+\aa^2(u/b)=1$ for $u\in [1, b]$,
and hence
$\sum_{\nu=0}^\infty \aa^2(b^{-\nu}u) =1$ for $u\in [1, \infty)$.
Set
\begin{equation}\label{def-Psi-j}
\LL_0:=Z_0
\quad\hbox{and}\quad
\LL_j:=\sum_{\kk=0}^\infty \aa\Big(\frac{\kk}{b^{j-1}}\Big)Z_\kk, \quad j\ge 1.
\end{equation}
It is easy to see that
$f=\sum_{j=0}^\infty \LL_j*\LL_j*f$ for every $f\in\cS'$ (convergence in $\cS'$).

The next step is to discretize $\LL_j*\LL_j$ for $j\ge 1$ by using the cubature formula on $\SS$
from \eqref{cubature} which is exact for all spherical harmonics of degree $\le 2b^{j}$,
 where $\cZ=\cX_j$ is a maximal $\delta_j$-net with $\delta_j:=\bar{\gamma}b^{-j}/2$ (see \eqref{index_set}).

Since the cubature formula \eqref{cubature} is exact for spherical harmonics of degree $\le 2b^{j}$, $j\in\NN$, (or of degree $0$ for $j=0$)
we have
$$
\LL_j*\LL_j(x\cdot y) =\int_\SS \LL_j(x\cdot\eta)\LL_j(\eta\cdot y)d\sigma(\eta)
= \sum_{\xi\in\cX_j} \ww_\xi \LL_j(x\cdot \xi)\LL_j(\xi\cdot y),
$$
which allows to discretize $f=\sum_{j=0}^\infty \LL_j*\LL_j*f$ and obtain
\begin{equation}\label{discretize}
f=\sum_{j=0}^\infty \sum_{\xi\in\cX_j}\langle f,\psi_\xi \rangle \psi_\xi,
\quad \forall f\in\cS' \quad\hbox{(convergence in $\cS'$)},
\end{equation}
\begin{equation}\label{def-psi-xi}
\psi_\xi(x):=\ww_\xi^{1/2} \LL_j(\xi\cdot x), \quad \xi\in\cX_j, \;j\ge 0.
\end{equation}

This completes the construction of the \emph{needlet system}
$\Psi=\{\psi_\xi\}_{\xi\in\cX}$.

Observe that the functions $\{\psi_\xi\}$ are not only band limited,
but also have an excellent space localization on $\SS$.
From the properties of $\aa$, Theorem~\ref{thm:localization}, and \eqref{cubature_w}
it follows that (see also \cite{NPW1, NPW2}) for any $M>0$
\begin{equation}\label{local-needlet-0}
|\psi_\xi(x)| \le c b^{j(d-1)/2}(1+b^j\rho(x, \xi))^{-M},
\quad x\in\SS,\; \xi\in\cX_j, \; \;j\ge 0,
\end{equation}
where $c>0$ is a constant depending only on $d$, $M$, $b$, and $\aa$.
Moreover, the localization of $\psi_\xi$ can be improved to sub-exponential
as shown in \cite[Theorem~5.1]{IPX}.
The normalization factor $\ww_\xi^{1/2}$ in \eqref{def-psi-xi} makes all $\psi_\xi$ essentially normalized in $L^2(\SS)$,
i.e. $\|\psi_\xi\|_{L^2(\SS)}\sim 1$.

\smallskip

We next define the Besov and Triebel-Lizorkin sequence spaces $\fb_p^{sq}$ and $\ff_p^{sq}$ associated to the $\delta_j$-nets  $\cX_j$ from $\cX$.

\begin{defn}\label{def:B}
Let $b>1$, $s\in \RR$, $0<p,q\le\infty$. Then $\fb_p^{sq}:=\fb_p^{sq}(\cX)$
is defined as the space of all sequences of complex numbers
$h:=\{h_{\xi}\}_{\xi\in \cX}$ such that
\begin{equation}\label{def-b-space}
\|h\|_{\fb_p^{sq}} :=
\Big(\sum_{j=0}^\infty
\Big[b^{j[s+(d-1)(1/2-1/p)]}
\Big(\sum_{\xi \in \cX_j}
|h_\xi|^p\Big)^{1/p}\Big]^q\Big)^{1/q}<\infty
\end{equation}
with the usual modification when $p=\infty$ or  $q=\infty$.
\end{defn}

\begin{defn}\label{def:TL}
Let $b>1$, $s\in \RR$, $0<p<\infty$, and $0<q\le\infty$. Then $\ff_p^{sq}:=\ff_p^{sq}(\cX)$
is defined as the space of all sequences of complex numbers
$h:=\{h_{\xi}\}_{\xi\in \cX}$ such that
\begin{equation}\label{def-f-space}
\|h\|_{\ff_p^{sq}} :=\Big\|\Big(\sum_{\xi\in\cX}
\big[|B_\xi|^{-s/(d-1)-1/2}
|h_{\xi}|\ONE_{B_\xi}(\cdot)\big]^q\Big)^{1/q}\Big\|_{L^p} <\infty
\end{equation}
with the usual modification for $q=\infty$.
Here $B_\xi:=B(\xi,\bar{\gamma} b^{-j+1})$, $\xi\in \cX_j$, where $\bar{\gamma}$ is used in the selection of $\cX_j$,
$|B_\xi|$ is the measure of $B_\xi$
and
$\ONE_{B_\xi}$ is the characteristic function of $B_\xi$.
\end{defn}

The main result in this subsection asserts that $\{\psi_\xi\}_{\xi\in\cX}$ is a self-dual real-valued frame for
the Besov and Triebel-Lizorkin spaces on the sphere
with parameters $(s, p, q)$ in the range
\begin{equation}\label{indices-1}
\QQ=\QQ(A):=\big\{(s, p, q): |s| \le A,\; A^{-1}\le p \le A, \;\hbox{and}\; A^{-1}\le q<\infty\big\},
\end{equation}
for a given constant $A>1$.
To state this result we introduce the following
{\em analysis} and {\em synthesis} operators:
\begin{equation}\label{def-oper-S-T}
S_{\psi}: f\mapsto \{\langle f, \psi_\xi\rangle\}_{\xi\in\cX},
\quad
T_\psi: \{h_\xi\}_{\xi\in\cX} \mapsto \sum_{\xi\in\cX}h_\xi\psi_\xi.
\end{equation}

\begin{thm}\label{thm:F-Bnorm-equiv}
Let $b>1$ and $A>1$.
\smallskip

\noindent
$(a)$
The operators
$S_{\psi}: \cB_p^{s q} \to \fb_p^{s q}$ and
$T_\psi: \fb_p^{s q} \to \cB_p^{s q}$ are uniformly bounded for $(s, p, q)\in\QQ(A)$, and
$T_\psi\circ S_{\psi}= I$ on $\cB_p^{s q}$.
Hence,
if $f\in \cS'$, then $f\in \cB_p^{sq}$ if and only if
$\{\langle f,\psi_\xi\rangle\}_{\xi \in \cX}\in \fb_p^{sq}$,
and
\begin{equation}\label{B-disc-calderon}
f =\sum_{\xi\in \cX}\langle f, \psi_\xi\rangle \psi_\xi
\quad\mbox{and}\quad
\|f\|_{\cB_p^{sq}}
\sim  \|\{\langle f,\psi_\xi\rangle\}\|_{\fb_p^{sq}}.
\end{equation}

\smallskip

\noindent
$(b)$
The operators
$S_{\psi}: \cF_p^{s q} \to \ff_p^{s q}$ and
$T_\psi: \ff_p^{s q} \to \cF_p^{s q}$ are  uniformly bounded for $(s, p, q)\in\QQ(A)$, and
$T_\psi\circ S_{\psi}= I$ on $\cF_p^{s q}$.
Hence,
if $f\in \cS'$, then $f\in \cF_p^{sq}$ if and only if
$\{\langle f,\psi_\xi\rangle\}_{\xi \in \cX}\in \ff_p^{sq}$, and
\begin{equation}\label{F-disc-calderon}
f =\sum_{\xi\in \cX}\langle f, \psi_\xi\rangle \psi_\xi
\quad \mbox{and}\quad
\|f\|_{\cF_p^{sq}}
\sim  \|\{\langle f,\psi_\xi\rangle\}\|_{\ff_p^{sq}}.
\end{equation}
The convergence in $(\ref{B-disc-calderon})$ and $(\ref{F-disc-calderon})$ is unconditional
in $\cB_p^{sq}$ and $\cF_p^{sq}$, respectively.
The constants of equivalence in $(\ref{B-disc-calderon})$ and $(\ref{F-disc-calderon})$ depend only on $A$, $b$, $d$, and $\varphi$.
\end{thm}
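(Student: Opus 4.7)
The plan is to deduce the full theorem from two boundedness statements together with the discrete Calder\'on identity \eqref{discretize}. The latter already gives $T_\psi\circ S_\psi=I$ on $\cS'$, so if I can show that $S_\psi$ and $T_\psi$ are uniformly bounded between the corresponding spaces for $(s,p,q)\in\QQ(A)$, the norm equivalences in \eqref{B-disc-calderon} and \eqref{F-disc-calderon} follow immediately, and unconditional convergence of the needlet series in the quasi-norm drops out by density of finite sums together with the $T_\psi$ bound.

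For the analysis operator, I would exploit the band-limited nature of $\psi_\xi$. Choose an auxiliary $\tilde\varphi\in C^\infty(\RR_+)$ with compact support and $\tilde\varphi\equiv 1$ on $\supp\varphi\cup\{0\}$, and define $\tilde\Phi_j$ as in \eqref{rep-Phi-j-f}. Since $\psi_\xi$ for $\xi\in\cX_j$ has spherical-harmonic spectrum contained in $[b^{j-1},b^{j+1}]$ (or $\{0\}$ for $j=0$), the reproducing property gives
\[
\langle f,\psi_\xi\rangle=\sum_{|k-j|\le 1}\langle \tilde\Phi_k*f,\psi_\xi\rangle.
\]
Combining the localization \eqref{local-needlet-0} of $\psi_\xi$ with a Peetre--Nikol'ski\u\i\ sub-mean-value estimate for band-limited functions yields, for any $r>0$ and $M$ sufficiently large,
\[
|\langle f,\psi_\xi\rangle|\le c\,b^{-j(d-1)/2}\sum_{|k-j|\le 1}\inf_{y\in B_\xi}\mathcal{M}_r(\tilde\Phi_k*f)(y),
\]
where $\mathcal{M}_r g:=(\mathcal{M}|g|^r)^{1/r}$ is the $r$-th power Hardy--Littlewood maximal function. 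Substituting this into Definitions~\ref{def:B}--\ref{def:TL} and choosing $r<\min(p,q)$, the $\fb_p^{sq}$ bound follows from the scalar maximal inequality, while the $\ff_p^{sq}$ bound follows from the Fefferman--Stein vector-valued maximal inequality.

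For the synthesis operator, band-limitedness again forces $\Phi_k*\psi_\xi\equiv 0$ whenever $\xi\in\cX_j$ and $|k-j|>1$. For $|k-j|\le 1$, combining \eqref{local-needlet-0} with \thmref{thm:localization} applied to $\Phi_k$ yields
\[
|\Phi_k*T_\psi h(x)|\le c\,b^{k(d-1)/2}\sum_{|k-j|\le 1}\sum_{\xi\in\cX_j}\frac{|h_\xi|}{(1+b^k\rho(x,\xi))^M}.
\]
Invoking \lemref{lem:Besov_1} and the maximal-function domination of such localized sums, taking $L^p$ norms produces the required bound for $\cB_p^{sq}$ via Minkowski (when $p\ge 1$) or a standard $p$-triangle argument (for $p<1$). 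For $\cF_p^{sq}$ one keeps the estimate pointwise, rewrites the inner sum as a vector-valued average, and applies Fefferman--Stein once more to handle the mixed $L^p(\ell^q)$ norm.

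The main obstacle is the $\cF$-space case when $\min(p,q)$ is small. There the interchange of $L^p$ and $\ell^q$ norms is delicate and the argument genuinely requires the Peetre maximal function to majorize the pointwise values of $\tilde\Phi_k*f$ by radially decreasing quantities, followed by Fefferman--Stein with exponent $r<\min(p,q)$. A secondary technical point is ensuring that all constants remain uniform over the compact parameter region $\QQ(A)$; this amounts to tracking the dependence of $M$, $r$, and the maximal-inequality constants on $(s,p,q)\in\QQ(A)$, which is routine but must be done carefully.
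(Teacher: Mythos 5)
The paper does not reproduce a proof of this theorem---it cites \cite[Theorems~4.5 and 5.5]{NPW2} and \cite[Remark~3.13]{IP2} and declares the extension from $b=2$ to general $b>1$ routine. Your plan (spectral localization of $\psi_\xi$ together with auxiliary reproducing blocks $\tilde\Phi_k$, a Peetre-type sub-mean-value estimate to majorize $|\langle f,\psi_\xi\rangle|$ by an $r$-power Hardy--Littlewood maximal function with $r<\min(p,q)$, the Fefferman--Stein vector-valued maximal inequality for the $\cF$-case, and the needlet localization \eqref{local-needlet-0} combined with \lemref{lem:Besov_1} for the synthesis bound) is precisely the route taken in the cited NPW2 proof, so the proposal is correct and follows essentially the same approach.
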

For details and proofs in the case $b=2$, see \cite[Theorems 4.5 and 5.5]{NPW2} and \cite[Remark 3.13]{IP2}.
The proof for general $b>1$ is similar.

Some embeddings between Besov or Triebel-Lizorkin spaces are given in \cite[Proposition 3.15]{IP2}.
Here we state only those that will be needed later on.

\begin{prop}\label{embed}
Assume $s,s_0,s_1\in\RR$ and let $0<p,p_0,p_1,q,q_0,q_1\le\infty$ in the case of Besov spaces
and $0<p,p_0,p_1<\infty$, $0<q,q_1,q_2\le\infty$ in the case of Triebel-Lizorkin spaces.
The following continuous embeddings are valid:
\begin{equation}\label{eq:2}
\cB^{s_0q_0}_p\subset \cB^{s_1q_1}_p,~~\cF^{s_0q_0}_p\subset \cF^{s_1q_1}_p,
\quad \hbox{if}\;\;
s_0=s_1,~q_0\le q_1~~\mbox{or}~~s_0>s_1,~\forall q_0, q_1;
\end{equation}
\begin{equation}\label{eq:8b}
\cB^{sq}_{p}\subset \cF^{sq}_{p}\subset \cF^{sp}_{p}=\cB^{sp}_{p},
\quad  \hbox{if}\;\;
q<p;
\end{equation}
\begin{equation}\label{eq:8c}
\cB^{sp}_{p}=\cF^{sp}_{p}\subset \cF^{sq}_{p}\subset \cB^{sq}_{p},
\quad  \hbox{if}\;\;
p<q.
\end{equation}
\end{prop}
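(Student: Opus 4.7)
The plan is to transfer every embedding to the level of the sequence spaces $\fb_p^{sq}(\cX)$ and $\ff_p^{sq}(\cX)$ via the frame isomorphism of \thmref{thm:F-Bnorm-equiv} and then verify each sequence-level containment by a direct calculation. Since the analysis and synthesis operators $S_\psi, T_\psi$ are bounded on each of the spaces in question and satisfy $T_\psi \circ S_\psi = I$, any continuous embedding $\fb_{p_0}^{s_0 q_0}(\cX) \hookrightarrow \fb_{p_1}^{s_1 q_1}(\cX)$ pulls back to $\cB_{p_0}^{s_0 q_0} \hookrightarrow \cB_{p_1}^{s_1 q_1}$, and likewise for the $\ff$ versus $\cF$ pair. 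All the work is then moved to sequence-level inequalities.

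For \eqref{eq:2}, the Besov sequence norm \eqref{def-b-space} is a weighted $\ell^q$-norm in the level index $j$ of $\tilde a_j := (\sum_{\xi \in \cX_j} |h_\xi|^p)^{1/p}$. If $s_0 = s_1$ and $q_0 \le q_1$, the embedding is the classical $\ell^{q_0} \hookrightarrow \ell^{q_1}$ monotonicity applied to this weighted sequence. If $s_0 > s_1$, for any $q_0, q_1 \in (0, \infty]$ I would bound the weighted level-$j$ term by $b^{(s_1 - s_0)j}$ times the $s_0$-weighted counterpart, absorb the geometric factor via the convergent sum $\sum_j b^{-(s_0 - s_1) q_1 j}$, and dominate the resulting supremum by $\|h\|_{\fb_p^{s_0 q_0}}$. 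The Triebel-Lizorkin analogue runs the identical arithmetic pointwise on $\SS$ before taking the outer $L^p$-norm, using that $\{\ONE_{B_\xi}\}_{\xi \in \cX_j}$ has uniformly bounded overlap.

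For \eqref{eq:8b} and \eqref{eq:8c}, set $g_j(x) := b^{sj}|\Phi_j * f(x)|$, so that the $\cB$- and $\cF$-norms become $\|(\|g_j\|_p)_j\|_{\ell^q}$ and $\|(\sum_j g_j^q)^{1/q}\|_{L^p}$, respectively. When $q \le p$, ordinary Minkowski applied in $L^{p/q}$ yields $\|(\sum_j g_j^q)^{1/q}\|_{L^p} \le (\sum_j \|g_j\|_p^q)^{1/q}$, i.e.\ $\cB_p^{sq} \hookrightarrow \cF_p^{sq}$; when $p \le q$, the reverse Minkowski for $L^r$ with $r = p/q < 1$ gives the opposite inequality, so $\cF_p^{sq} \hookrightarrow \cB_p^{sq}$. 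The equality $\cF_p^{sp} = \cB_p^{sp}$ is Fubini, both norms collapsing to $\int_\SS \sum_j g_j^p$. The middle embeddings $\cF_p^{sq_0} \hookrightarrow \cF_p^{sq_1}$ with $q_0 \le q_1$ reduce to the pointwise $\ell^{q_0} \hookrightarrow \ell^{q_1}$ estimate applied inside the $L^p$-norm.

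I do not expect a genuine obstacle here: the delicate harmonic analysis is already packaged into \thmref{thm:F-Bnorm-equiv}, and what remains is classical. The only arithmetic point that needs care is verifying that the Besov weight $s + (d-1)(1/2 - 1/p)$ at level $j$ matches the Triebel-Lizorkin normalization $|B_\xi|^{-s/(d-1)-1/2}$ for $\xi \in \cX_j$; the identity $|B_\xi|^{1 - p/2 - sp/(d-1)} \sim b^{-jp[s + (d-1)(1/2 - 1/p)]}$ supplies the identification $\fb_p^{sp} = \ff_p^{sp}$ that underpins $\cB_p^{sp} = \cF_p^{sp}$.
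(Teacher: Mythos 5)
The paper does not actually prove this proposition; it is quoted with a citation to \cite[Proposition 3.15]{IP2}, so there is no in-paper argument to compare against. Your proof is correct, and the tools you use are the standard ones (weighted $\ell^q$ monotonicity, geometric summation, Minkowski and reverse Minkowski in $L^{p/q}$, Fubini). One remark about exposition: the announced strategy (transfer everything to the sequence spaces $\fb_p^{sq}$, $\ff_p^{sq}$ via \thmref{thm:F-Bnorm-equiv}) is not what you actually do for \eqref{eq:8b} and \eqref{eq:8c}, where you work directly with $g_j(x)=b^{sj}|\Phi_j*f(x)|$ from Definition~\ref{def:B-F-spaces}. That direct route is cleaner and is in fact available for \eqref{eq:2} as well: the same $\ell^{q_0}\hookrightarrow\ell^{q_1}$ monotonicity and the convergent factor $\sum_{j\ge 0}b^{-(s_0-s_1)q_1 j}$ apply verbatim to $\{b^{sj}\|\Phi_j*f\|_{L^p}\}_j$ in the Besov case and pointwise to $\{b^{sj}|\Phi_j*f(x)|\}_j$ before the outer $L^p$-norm in the Triebel--Lizorkin case, avoiding both the bounded-overlap technicality for $\{\ONE_{B_\xi}\}_{\xi\in\cX_j}$ and the weight bookkeeping in your final paragraph. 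That bookkeeping (matching $|B_\xi|^{-s/(d-1)-1/2}$ against $b^{j[s+(d-1)(1/2-1/p)]}$) is itself correct, but it becomes superfluous once $\cF_p^{sp}=\cB_p^{sp}$ is read off Fubini directly in \eqref{B-norm}--\eqref{F-norm}, as you already do in the body of your argument.
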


\subsection{Frames in terms of shifts of the Newtonian kernel on $\SS$}

In \cite{IP2} we constructed a system $\Theta=\{\theta_\xi\}_{\xi\in\cX}$
where each element $\theta_\xi$ is \emph{a linear combination of a~fixed number of shifts of the Newtonian kernel};
$\Theta$ and its dual system $\tilde{\Theta}$ are frames for
the Besov spaces $\cB_p^{sq}$ and the Triebel-Lizorkin spaces $\cF_p^{sq}$ with $p<\infty$.
With $\QQ(A)$ defined in \eqref{indices-1} we have

\begin{thm}\label{thm:prop-frame}
Assume $d\ge 2$, $b>1$, $A>1$, and
let $\Theta=\{\theta_\xi\}_{\xi\in\cX}$ be the real-valued system constructed in \cite[(6.36)--(6.37)]{IP2} with parameters
$K \ge \left\lceil Ad\right\rceil$, $K\in 2\NN$, and $M = K+d$.
If the constant $\gamma_0$ in the construction of $\{\theta_\xi\}_{\xi\in\cX}$ is sufficiently small, then:

$(a)$ The synthesis operator $T_\theta$ defined by $T_\theta h:= \sum_{\xi\in \cX}h_\xi\theta_\xi$
on sequences of complex numbers $h=\{h_\xi\}_{\xi\in\cX}$ is bounded as a map
$T_\theta: \fb_p^{sq} \mapsto \cB_p^{sq}$, uniformly with respect to to $(s, p, q)\in\QQ(A)$.

$(b)$ The operator
\begin{equation*}
Tf:=\sum_{\xi\in\cX} \langle f, \psi_\xi\rangle \theta_\xi=T_{\theta}S_{\psi}f,
\end{equation*}
is invertible on $\cB_p^{sq}$ and $T$, $T^{-1}$ are bounded on $\cB_p^{sq}$,
uniformly with respect to $(s, p, q)\in\QQ(A)$.

$(c)$ For $(s, p, q)\in \QQ(A)$ the dual system $\tilde\Theta=\{\tilde\theta_\xi\}_{\xi\in\cX}$ consists of bounded linear functionals
on $\cB_p^{sq}$ defined by
\begin{equation*}
\tilde\theta_\xi(f)=
\langle f, \tilde\theta_\xi\rangle
:= \sum_{\eta\in\cX}
\langle T^{-1}\psi_\eta, \psi_\xi\rangle \langle f, \psi_\eta\rangle
\quad \hbox{for}\;\; f\in \cB_p^{sq},
\end{equation*}
with the series converging absolutely.
Also, the analysis operator
$$
S_{\tilde\theta}: \cB_p^{sq} \mapsto \fb_p^{sq},\;
S_{\tilde\theta} = S_{\psi}T^{-1}T_{\psi}S_{\psi}=S_{\psi}T^{-1},
$$
is uniformly bounded with respect to $(s, p, q)\in\QQ(A)$ and
$T_\theta\circ S_{\tilde{\theta}}= I$ on $\cB_p^{s q}$.
Moreover, $\{\theta_\xi\}_{\xi\in\cX}$, $\{\tilde\theta_\xi\}_{\xi\in\cX}$
form a pair of dual frames for $\cB_p^{sq}$ in the following sense:
For any $f\in \cB_p^{sq}$
\begin{equation*}
f=\sum_{\xi\in\cX} \langle f, \tilde\theta_\xi\rangle \theta_\xi
\quad\hbox{and}\quad
\|f\|_{\cB_p^{sq}} \sim \|\{\langle f, \tilde\theta_\xi\rangle\}\|_{\fb_p^{sq}},
\end{equation*}
where the convergence is unconditional in $\cB_p^{sq}$.

Furthermore, $(a)$, $(b)$, and $(c)$ hold true when
$\cB_p^{sq}$, $\fb_p^{sq}$ are replaced by $\cF_p^{sq}$, $\ff_p^{sq}$, respectively.
\end{thm}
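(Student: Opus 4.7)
My plan follows the frame-perturbation strategy from \cite{IP2}, which for general $b>1$ requires no essential modification. The idea is to read every assertion off the already-established needlet frame $\Psi=\{\psi_\xi\}$ (\thmref{thm:F-Bnorm-equiv}) by treating $\Theta$ as a controllable perturbation of $\Psi$. The analytic engine is a Frazier--Jawerth almost-diagonal estimate on the Gram-type matrix $A_{\eta\xi}:=\langle \theta_\xi,\psi_\eta\rangle$,
\begin{equation*}
|A_{\eta\xi}|\le c\,\omega_{\eta\xi},
\end{equation*}
where $\omega_{\eta\xi}$ is the standard Peetre-type weight bounded on $\fb_p^{sq}$ and $\ff_p^{sq}$ uniformly over $(s,p,q)\in\QQ(A)$, together with the sharper ``perturbation'' bound
\begin{equation*}
|\langle \theta_\xi-\psi_\xi,\psi_\eta\rangle|\le \varepsilon(\gamma_0)\,\omega_{\eta\xi},\qquad \varepsilon(\gamma_0)\downarrow 0\;\text{as}\;\gamma_0\downarrow 0.
\end{equation*}
Both estimates follow from the explicit form of $\theta_\xi$ as a fixed-length linear combination of Newtonian kernel shifts inside the cap of radius $\sim\gamma_0 b^{-j}$ around $\xi\in\cX_j$, combined with the sub-exponential localization of $\psi_\eta$ and repeated use of \thmref{thm:localization}.

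For part (a), analysis by $\Psi$ gives
\begin{equation*}
\|T_\theta h\|_{\cB_p^{sq}}\sim \|\{\textstyle\sum_\xi A_{\eta\xi}h_\xi\}_{\eta\in\cX}\|_{\fb_p^{sq}},
\end{equation*}
and the almost-diagonal bound on $A_{\eta\xi}$ forces the matrix to be bounded on $\fb_p^{sq}$ uniformly on $\QQ(A)$; the Triebel--Lizorkin case is identical with $\ff_p^{sq}$. For part (b) I write $T=T_\theta S_\psi = I + R$ with $R:=(T_\theta-T_\psi)S_\psi$, using that $T_\psi S_\psi=I$ on $\cB_p^{sq}$ and $\cF_p^{sq}$. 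The $\Psi$-matrix of $R$ is precisely the perturbation matrix above, whose almost-diagonal operator norm is at most $c\,\varepsilon(\gamma_0)$. Choosing $\gamma_0$ small enough so that $c\,\varepsilon(\gamma_0)\le 1/2$ uniformly over $\QQ(A)$, the Neumann series $T^{-1}=\sum_{k\ge 0}(-R)^k$ converges in operator norm on every $\cB_p^{sq}$ and $\cF_p^{sq}$ in the allowed range, with uniform bounds.

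For part (c), since $T^{-1}$ is a convergent series of products of almost-diagonal matrices, its matrix entries $\langle T^{-1}\psi_\eta,\psi_\xi\rangle$ are themselves almost diagonal, which secures absolute convergence of the series defining $\langle f,\tilde\theta_\xi\rangle$ whenever $\{\langle f,\psi_\eta\rangle\}\in\fb_p^{sq}$ (or $\ff_p^{sq}$). By definition $S_{\tilde\theta}=S_\psi T^{-1}$, so $S_{\tilde\theta}$ is uniformly bounded into the sequence space. The reproducing identity is a one-line manipulation,
\begin{equation*}
T_\theta S_{\tilde\theta}f = T_\theta S_\psi T^{-1}f = T T^{-1}f = f,
\end{equation*}
and the two-sided norm equivalence then follows by combining this with part (a); unconditional convergence is inherited from the needlet expansion.

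The main obstacle is the quantitative almost-diagonal estimate with small gain $\varepsilon(\gamma_0)$. The delicate point is that the parameters $K$ and $M$ in the construction of $\theta_\xi$ must be chosen once and for all depending only on $A$ and $d$, yet the resulting decay must dominate every Frazier--Jawerth weight appearing in $\QQ(A)$; this is precisely what the prescriptions $K\ge \lceil Ad\rceil$ and $M=K+d$ are engineered to deliver. Granting these estimates, the remaining manipulations reduce to the routine frame/Neumann-series bookkeeping already carried out in \cite{IP2}.
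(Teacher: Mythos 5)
Your proposal is correct and reconstructs precisely the frame-perturbation machinery that the paper invokes: the paper's own proof of this theorem is a one-line citation to \cite[Theorem~6.9]{IP2} (for $b=2$, with the general $b>1$ case ``similar''), and that theorem is in turn an application of the abstract perturbation result \cite[Theorem~4.4]{IP2}, whose content you have faithfully unpacked (almost-diagonal Gram matrix $\langle\theta_\xi,\psi_\eta\rangle$, small-gain perturbation bound controlled by $\gamma_0$, Neumann-series inversion of $T=I+R$, and the identity $T_\theta S_{\tilde\theta}=I$). Your sketch matches the paper's (cited) approach in both strategy and level of detail.
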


\thmref{thm:prop-frame} is proved in the case $b=2$ in \cite[Theorem 6.9]{IP2}.
The proof for general $b>1$ is similar.

\section{The Triebel-Lizorkin spaces $\cF^{sq}_\infty$ on $\SS$}\label{sec:F-spaces}

As is well known (see Triebel's remark in \cite[p. 46]{Triebel-1}) 
if one replaces the $L^p$-norm by the $L^\infty$-norm in the definition of 
the Triebel-Lizorkin spaces $F^s_{pq}$ in the classical case on $\RR^d$ one does not obtain a satisfactory
definition of $F^s_{\infty q}$. 
A good definition of $F^s_{\infty q}$ in that case was made by Frazier and Jawerth in \cite[see (5.1)]{FJ2}.
In this section we are concerned with the Triebel-Lizorkin spaces $\cF^{sq}_\infty$ on $\SS$
and their separable subspaces $\sepTL{\cF}{s}{q}$.
In particular, we study their properties and construct two types of frames for them.

\subsection{Definition and basic properties of the spaces $\cF^{sq}_\infty$ on $\SS$}

\begin{defn}\label{def:F-infty}
Let $b>1$, $s\in\RR$ and $0<q\le\infty$.
Let the functions $\Phi_n$, $n\ge 0$, be as in Definition~\ref{def:B-F-spaces}.
The Triebel-Lizorkin space $\cF^{sq}_\infty:=\cF^{sq}_\infty(\SS)$ is defined as the set of all distributions
$f\in \cS'$ such that
\begin{equation}\label{F-norm-infty0}
\|f\|_{\cF^{s q}_\infty} := \sup_{\substack{y\in\SS\\0<r\le\pi}}
\bigg(\frac{1}{|B(y,r)|}\int_{B(y,r)}\sum_{n\ge j} \big(b^{sn}|\Phi_n*f(x)|\big)^q d\sigma(x)\bigg)^{1/q}
< \infty,
\end{equation}
if $q<\infty$, where $j=\left\lfloor \log_{b}(\pi/r)\right\rfloor$,
and
\begin{equation}\label{F-infty-infty}
\|f\|_{\cF^{s\infty}_\infty} := \sup_{n\ge 0} b^{sn}\|\Phi_n*f\|_{L^\infty(\SS)} < \infty.
\end{equation}
\end{defn}

\begin{defn}\label{def:VF-infty}
Let $b>1$, $s\in\RR$ and $0<q\le\infty$.
The \emph{Separable Triebel-Lizorkin space} $\sepTL{\cF}{s}{q}:=\sepTL{\cF}{s}{q}(\SS)$
is defined as the set of all distributions
$f\in \cF^{sq}_\infty(\SS)$ satisfying for $q<\infty$
\begin{equation}\label{VF-norm-infty0}
\lim_{\delta\to 0} \sup_{\substack{y\in\SS\\0<r\le\delta}}
\bigg(\frac{1}{|B(y,r)|}\int_{B(y,r)}\sum_{n\ge \left\lfloor \log_{b}(\pi/r)\right\rfloor} \big(b^{sn}|\Phi_n*f(x)|\big)^q d\sigma(x)\bigg)^{1/q}=0
\end{equation}
and for $q=\infty$
\begin{equation}\label{VF-infty-infty}
\lim_{n\to\infty} b^{sn}\|\Phi_n*f\|_{L^\infty(\SS)} =0.
\end{equation}
Also, $\sepTL{\cF}{s}{q}(\SS)$ inherits the norm from $\cF^{sq}_\infty(\SS)$.
\end{defn}

From the above definitions we immediately get the following properties.

\begin{prop}\label{prop:TL-infty}
Let $s\in\RR$, $0<q\le\infty$. Then

$(a)$
$\cF^{sq}_\infty(\SS)$ and $\sepTL{\cF}{s}{q}(\SS)$ are (quasi-)Banach spaces.

$(b)$
$\sepTL{\cF}{s}{q}(\SS)$ is the closure in the $\cF^{sq}_\infty(\SS)$ norm of $C^\infty(\SS)$.

$(c)$
$\sepTL{\cF}{s}{q}(\SS)$ is a \emph{separable space}, while $\cF^{sq}_\infty(\SS)$ is \emph{non-separable}.

\end{prop}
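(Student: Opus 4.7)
The plan is to handle the three parts in turn, with the density claim in (b) being the principal technical step.

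For (a), completeness of $\cF^{sq}_\infty(\SS)$ follows a standard Cauchy-sequence plus Fatou scheme. Specializing $y=e_1$, $r=\pi$ in \eqref{F-norm-infty0} gives $j=0$ and shows that the $\cF^{sq}_\infty$-norm dominates $b^{sn}\|\Phi_n*f\|_{L^q(\SS)}$ for every $n$. Combined with \eqref{represent-f}, this lets me extract a distributional limit $f\in\cS'$ from any Cauchy sequence $\{f_k\}\subset\cF^{sq}_\infty$. Fatou's lemma applied to each cap integral in \eqref{F-norm-infty0} then promotes this to convergence in the $\cF^{sq}_\infty$-norm, and an analogous but simpler argument handles $q=\infty$ via \eqref{F-infty-infty}. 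Completeness of $\sepTL{\cF}{s}{q}$ reduces to showing that $\sepTL{\cF}{s}{q}$ is closed in $\cF^{sq}_\infty$, which is a three-$\eps$ argument applied to the defining limits \eqref{VF-norm-infty0} and \eqref{VF-infty-infty}.

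For (b) there are two inclusions. The inclusion $C^\infty(\SS)\subset\sepTL{\cF}{s}{q}$ is easy: for $\phi\in C^\infty(\SS)$ the characterization \eqref{test_norms} together with the support condition \eqref{adm} yields $\|\Phi_n*\phi\|_\infty\le C_M(\phi)b^{-Mn}$ for every $M\ge 0$, giving both \eqref{VF-norm-infty0} and \eqref{VF-infty-infty} at once. For density, let $f\in\sepTL{\cF}{s}{q}$ and set $V_Nf:=\Lambda_N*f$ with $\Lambda_N$ built as in Theorem~\ref{thm:localization} from a $\lambda\in C^\infty(\RR_+)$ satisfying $\lambda\equiv 1$ on $[0,1]$ and $\supp\lambda\subset[0,b']$. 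Each $V_Nf$ is then a spherical polynomial, hence $C^\infty$. The spectral structure of $\Phi_n$ and $\Lambda_N$ forces $\Phi_n*(f-V_Nf)=0$ for $n$ below a window around $\log_b N$, bounded by a fixed multiple of $|\Phi_n*f|$ inside the window, and equal to $\Phi_n*f$ above it. To control $\|f-V_Nf\|_{\cF^{sq}_\infty}$ I would split the sup in \eqref{F-norm-infty0}: on caps of small radius the hypothesis \eqref{VF-norm-infty0} gives smallness for $f$ alone, hence for $f-V_Nf$; on caps of radius bounded below, rewrite the effective tail $\sum_{n\gtrsim\log_b N}(b^{sn}|\Phi_n*f|)^q$ as an average of the same tail over subcaps of radius $\sim b^{-N}$, and again apply \eqref{VF-norm-infty0}.

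For (c), separability of $\sepTL{\cF}{s}{q}$ is immediate from (b): spherical polynomials with rational coefficients form a countable set, dense in $C^\infty(\SS)$ in every seminorm $P_m$, and hence dense in $\sepTL{\cF}{s}{q}$ by the easy half of (b). For non-separability of $\cF^{sq}_\infty$, I would construct an uncountable family with pairwise distances bounded below. Fix $\eta_0\in\SS$, pick points $\eta_n\in\SS$ with $\rho(\eta_n,\eta_0)\sim b^{-n}$, and take needlet-type bumps $\psi_n$ of scale $b^{-n}$ centred at $\eta_n$ (with appropriate $L^\infty$ normalization and the localization from Theorem~\ref{thm:localization}); the supports are then essentially disjoint. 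For each $S\subset\NN$ set $f_S:=\sum_{n\in S}b^{-sn}\psi_n$. Testing \eqref{F-norm-infty0} on the cap of radius $\sim b^{-n}$ around $\eta_n$ isolates $\psi_n$ and yields $\|f_S-f_T\|_{\cF^{sq}_\infty}\ge c>0$ whenever $S\ne T$, giving continuum-many points at uniform pairwise distance. The main obstacle is the density step in (b): the asymmetry is that \eqref{VF-norm-infty0} only encodes vanishing on small caps, while $\|f-V_Nf\|_{\cF^{sq}_\infty}$ is a sup over caps of all radii, and the required reduction — expressing a cap average of a Littlewood-Paley tail as an average of the same tail over small subcaps — is the technical core.
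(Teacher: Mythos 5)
The paper supplies no proof of this proposition (it is stated as immediate from the definitions), so there is no printed argument to compare against; your outline is the natural one and would close the gap. The key observation in (b) that makes the cap-decomposition work — $\Phi_n*(f-V_Nf)\equiv 0$ for every $n\le\lfloor\log_b N\rfloor$, so that even on a large cap only the tail $n>\log_b N$ survives and can be reassembled from averages over a bounded-overlap cover by caps of radius $\sim b^{-\lfloor\log_b N\rfloor}$ — is exactly the right way to handle the asymmetry you flag between the sup over all radii in $\|\cdot\|_{\cF^{sq}_\infty}$ and the small-cap content of \eqref{VF-norm-infty0}.

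One assertion does need repair: the claim that $\Phi_n*(f-V_Nf)$ is ``bounded by a fixed multiple of $|\Phi_n*f|$'' in the transition window is not a pointwise bound and cannot be, since $\Phi_n*(I-\Lambda_N)*f$ is a single convolution (with a kernel localized at scale $b^{-n}$, by Theorem~\ref{thm:localization} applied to the $C^\infty$ symbol $\varphi(\cdot/b^{n-1})(1-\lambda(\cdot/N))$) rather than a pointwise multiple of $\Phi_n*f(x)$. The correct comparison is the cap-average argument carried out in the proof of \propref{prop:independ}: one controls $\frac{1}{|B|}\int_B(b^{sn}|\Phi_n*(I-\Lambda_N)*f|)^q$ by the analogous averages of $\Phi_m*f$ for finitely many $m$ near $n$ over slightly dilated caps, using the localization and Lemma~\ref{lem:Besov_1}. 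You already identify this reduction as the technical core; the machinery of \propref{prop:independ} supplies it essentially verbatim. For (c), the $\eps$-separated construction is sound; for $q<\infty$ just make sure the uniform bound $\|f_S\|_{\cF^{sq}_\infty}\lesssim 1$ is verified over caps of \emph{all} radii — your choice $\rho(\eta_n,\eta_0)\sim b^{-n}$ is designed for this, since a cap of radius $\sim b^{-j}$ about $\eta_0$ picks up precisely the bumps with $n\ge j$ and their contributions form a geometric series after normalization by $|B|^{-1}$.
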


\begin{prop}\label{prop:independ}
The definition of the Triebel-Lizorkin spaces $\cF^{sq}_\infty$, $s\in\RR$, $0<q\le\infty$, $($Definition~\ref{def:F-infty}$)$
is independent of the particular selection of $b>1$ and $\varphi$ satisfying \eqref{adm}.
\end{prop}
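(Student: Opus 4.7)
The plan is to show that any two admissible pairs $(b_1,\varphi_1)$ and $(b_2,\varphi_2)$ with associated kernels $\Phi_j^{(1)}$ and $\Phi_n^{(2)}$ produce equivalent norms in \eqref{F-norm-infty0} and \eqref{F-infty-infty}. By symmetry it suffices to bound one norm by the other. First I would construct a companion cutoff $\tilde\varphi_2\in C^\infty(\RR_+)$ with $\supp\tilde\varphi_2\subset[b_2^{-1},b_2]$ satisfying $\sum_{n\ge 0}\varphi_2(u/b_2^{n-1})\tilde\varphi_2(u/b_2^{n-1})\equiv 1$ on $[1,\infty)$. Denoting by $\tilde\Phi_n^{(2)}$ the kernel built from $\tilde\varphi_2$ via \eqref{rep-Phi-j-f}, this yields a continuous reproducing identity $f=\sum_{n\ge 0}\tilde\Phi_n^{(2)}*\Phi_n^{(2)}*f$ in $\cS'$. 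Convolving with $\Phi_j^{(1)}$ and using that only indices with $b_2^n\sim b_1^j$ have overlapping spectral support, we obtain
\[
\Phi_j^{(1)}*f(x)=\sum_{|n-aj|\le C}\bigl(\Phi_j^{(1)}*\tilde\Phi_n^{(2)}\bigr)*\Phi_n^{(2)}*f(x),\qquad a:=\frac{\log b_1}{\log b_2},
\]
with $C$ depending only on $b_1,b_2$.

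Next I would introduce, for a sufficiently large $L$, the Peetre-type maximal function $\Phi_n^{(2),*}f(x):=\sup_{y\in\SS}|\Phi_n^{(2)}*f(y)|(1+b_2^n\rho(x,y))^{-L}$. By \thmref{thm:localization} the convolution kernels $\Phi_j^{(1)}*\tilde\Phi_n^{(2)}$ admit the localization bound $\le c_Lb_1^{j(d-1)}(1+b_1^j\rho)^{-L}$, uniformly for $j$ and $n$ in the admissible range; distributing this weight and summing via \lemref{lem:Besov_1} (or \eqref{eq:conv_1}) yields the pointwise estimate
\[
|\Phi_j^{(1)}*f(x)|\le c\sum_{|n-aj|\le C}\Phi_n^{(2),*}f(x).
\]
A companion Peetre-type estimate---the analogue on $\SS$ of the classical bound on $\RR^d$---expresses $\Phi_n^{(2),*}f(x)$ as the sum of a local average of $|\Phi_n^{(2)}*f|$ over $B(x,cb_2^{-n})$ and a rapidly decaying tail controlled by powers of $(1+b_2^n\rho)^{-1}$.

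Inserting these bounds into \eqref{F-norm-infty0} for $(b_1,\varphi_1)$, raising to the $q$-th power, weighting by $b_1^{sjq}$, and reindexing via $b_1^j\sim b_2^n$, the cutoff $j\ge j_1(r):=\lfloor\log_{b_1}(\pi/r)\rfloor$ translates into $n\ge j_2(r)+O(1)$ with $j_2(r):=\lfloor\log_{b_2}(\pi/r)\rfloor$. The $O(1)$ shift produces only boundedly many extra terms, each absorbed into the $(b_2,\varphi_2)$-norm on a slightly enlarged ball $B(y,cr)$. The case $q=\infty$ is handled identically but more simply using \eqref{F-infty-infty}. The main obstacle is the final step: unlike the $p<\infty$ case of \cite{NPW2, IP2}, where the Fefferman--Stein vector-valued maximal inequality globally absorbs the Peetre estimate, the outer supremum over balls in \eqref{F-norm-infty0} forces purely local reasoning. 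One must therefore show that the Peetre tails from radii beyond $r$ and scales $n<j_2(r)$ are summable with sufficient geometric decay, and that each annular contribution is dominated by the $(b_2,\varphi_2)$-norm evaluated on a bounded family of balls of radius comparable to $r$. A careful bookkeeping of these tails, distributing each annulus into balls of the appropriate scale supplied by the outer supremum, closes the argument.
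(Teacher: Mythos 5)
Your opening moves match the paper's proof: both start from the Calder\'on reproducing identity
$f=\sum_{n}\tilde\Phi_n^{(2)}*\Phi_n^{(2)}*f$ built from a companion cutoff (the paper denotes this $\tilde\ups$, giving $f=\sum_\nu \widetilde\Ups_\nu*\Ups_\nu*f$), and both exploit that $\Phi_j^{(1)}*\tilde\Phi_n^{(2)}\neq 0$ only for $n$ in a window $|n-aj|\le C$, together with the localization of \thmref{thm:localization} for the cross-convolutions. After that the two arguments diverge in method: you introduce the continuous Peetre-type maximal function $\Phi_n^{(2),*}f$, whereas the paper avoids it and instead \emph{discretizes} immediately via maximal $\delta$-nets $\cZ_\nu$ and the measurable partitions $A_\eta$ from \eqref{disjoint}, replacing the Peetre bound by the discrete comparison $H_\eta\le c\,h_\eta$ of suprema to maxima-of-infima (invoking \cite[Lemma 4.7]{NPW2}, i.e. the analogue of \lemref{lem:sup_inf}); your ``companion Peetre-type estimate'' is precisely this lemma in disguise, so you are implicitly assuming a result that would itself require proof on $\SS$. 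The discrete route is better adapted here because it converts the outer supremum over balls $B(y,r)$ into a supremum over a bounded family of cells $A_\omega$, $\omega\in\cZ_k$, which is exactly what makes the $p=\infty$ bookkeeping tractable.

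There is, however, a genuine gap: you correctly identify the final step as ``the main obstacle'' --- without Fefferman--Stein one must control the Peetre tails locally ball-by-ball --- but you then leave that step as ``a careful bookkeeping of these tails \dots closes the argument.'' That bookkeeping is where essentially all the work of the paper's proof lives: it is the chain of inequalities \eqref{est-conv21}--\eqref{est-conv5}, culminating in the key localization claim \eqref{est-conv31}
\[
\sum_{\substack{\xi\in\cZ_\ell\\ A_\xi\cap A_\omega\ne\emptyset}}\frac{1}{\big(1+a^\nu\rho(\xi,\eta)\big)^{Mq_*}}
\le \frac{c}{\big(1+a^k\rho(\omega,\zeta)\big)^{Mq_*}},
\]
which transfers decay from the fine scale $\nu$ to the coarse scale $k\approx\beta(j-2)$ in a way that survives averaging over $B(y,r)$ and absorbs the $O(1)$ shift in the starting index $\left\lfloor\log_a(\pi/r)\right\rfloor$. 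Nothing in your sketch indicates how to carry this out, nor how the tails from scales $n<j_2(r)$ are to be absorbed; one must in fact verify that those scales drop out entirely because of the spectral-support constraint (only $\beta(n-2)\le\nu\le\beta n+2$ survive), which your reindexing passes over. So the plan is structurally correct and compatible with the paper's, but the crux is asserted rather than proved; as submitted, the argument is a sketch that stops exactly where the difficulty begins.
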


\begin{proof}
Let $a>1$ and $\ups\in C^\infty(\RR_+)$ be such that $\supp \ups \subset [a^{-1}, a]$
and $|\ups(u)|\ge c>0$ for $u\in [a^{-2/3}, a^{2/3}]$.
For any $f\in \cS'$ we set
\begin{equation*}
\Ups_0*f:=\cZ_0*f,
\quad
\Ups_j*f:= \sum_{k\ge 0}\ups\Big(\frac{k}{a^{j-1}}\Big) \cZ_k*f,\quad j\ge 1.
\end{equation*}
Denote
\begin{multline}\label{F-norm-a}
\|f\|_{\cF^{s q}_\infty(a,\Ups)}\\
:= 
\sup_{\substack{y\in\SS\\0<r\le\pi}}
\bigg(\frac{1}{|B(y,r)|}\int_{B(y,r)}\sum_{\nu\ge \left\lfloor \log_{a}\pi/r\right\rfloor} \big(a^{s\nu}|\Ups_\nu*f(x)|\big)^q d\sigma(x)\bigg)^{1/q}.
\end{multline}
It is easy to see (e.g. \cite[Lemma 6.9]{FJW}) that there exists a function $\tilde\ups$
with the properties of $\ups$
such that
\begin{equation*}
\sum_{\nu\ge 1}\tilde\ups\Big(\frac{u}{a^{\nu-1}}\Big)\ups\Big(\frac{u}{a^{\nu-1}}\Big) = 1,
\quad u\ge 1.
\end{equation*}
Therefore,
\begin{equation}\label{rep-f-Ups}
f=\sum_{\nu\ge 0} \widetilde\Ups_\nu*\Ups_\nu *f,
\end{equation}
where
\begin{equation*}
\widetilde\Ups_0*f:=\cZ_0*f,
\quad
\widetilde\Ups_\nu*f:= \sum_{k\ge 0}\tilde\ups\Big(\frac{k}{a^{\nu-1}}\Big) \cZ_k*f,
\quad \nu\ge 1.
\end{equation*}

Let $\bb>0$ be such that $b=a^\bb$.
Let $\Phi_n$ be from the definition of Besov and Triebel-Lizorkin space (Definition~\ref{def:B-F-spaces}).
Then from \eqref{rep-f-Ups}
\begin{equation*}
\Phi_n*f=\sum_{\beta n-2\beta \le \nu\le \beta n+2}\Phi_n*\widetilde\Ups_\nu*\Ups_\nu*f
\end{equation*}
because $\Phi_n*\widetilde\Ups_\nu\ne 0$ only for $\beta (n-2)\le \nu\le \beta n+2$.
From Theorem~\ref{thm:localization} it follows that
\begin{equation*}
\Phi_n(x\cdot y) \le cb^{n(d-1)}\big(1+b^n\rho(x, y)\big)^{-M},
\quad
\widetilde\Ups_\nu(x\cdot y) \le ca^{\nu(d-1)}\big(1+a^\nu\rho(x, y)\big)^{-M}.
\end{equation*}
Just as in \cite[Proposition 2.5]{IP2}
these two inequalities imply
\begin{equation*}
\Phi_n*\widetilde\Ups_\nu(x\cdot y)
\le ca^{\nu(d-1)}\big(1+a^\nu\rho(x, y)\big)^{-M},
\quad \beta (n-2)\le \nu\le \beta n+2.
\end{equation*}
Here $M>0$ can be arbitrarily large. We choose $M=d/q_*$ with $q_*=\min\{q,1\}$.
Therefore,
\begin{equation}\label{est-conv}
|\Phi_n*f(x)| \le c\sum_{\beta (n-2)\le \nu\le \beta n+2}
\int_\SS\frac{a^{\nu(d-1)}|\Ups_\nu*f(y)|}{\big(1+a^\nu\rho(x, y)\big)^M} d\sigma(y).
\end{equation}

Fix $z\in\SS$ and $0<r\le \pi$, and denote
\begin{equation*}
\cN_{z,r}:=\bigg(\frac{1}{|B(z,r)|}\int_{B(z,r)}\sum_{n\ge j} \big(b^{sn}|\Phi_n*f(x)|\big)^q d\sigma(x)\bigg)^{1/q},
\end{equation*}
where $j=\left\lfloor \log_{b}\pi/r\right\rfloor$ and $q<\infty$.

Let $\cZ_\nu$, $\nu\in\ZZ$, be a maximal $\delta$-net  on $\SS$, $\delta=a^{-\nu}$,
and let $\{A_\zeta\}_{\zeta\in \cZ_\nu}$ be the sets from \eqref{disjoint} with $\delta=a^{-\nu}$.
For each $\eta\in\cZ_\nu$, we set $H_\eta:=\max_{y\in B(\eta,a^{-\nu})}|\Ups_\nu*f(y)|$
and
$$
h_\eta:=\max\{\min_{z\in B(\zeta,a^{-\nu-\kappa})}|\Ups_\nu*f(z)|
: \zeta\in\cZ_{\nu+\kappa}, B(\zeta,a^{-\nu-\kappa})\cap B(\eta,a^{-\nu})\ne\emptyset\},
$$
where $\kappa \in \NN$ is chosen as follows.
We invoke Lemma~4.7 from \cite{NPW2} to conclude that
for a sufficiently large $\kappa \in \NN$ we have for any $\xi\in\cX_\nu$
\begin{equation}\label{H-h}
\sum_{\eta\in\cZ_\nu}
\frac{H_\eta}{\big(1+a^\nu\rho(\xi, \eta)\big)^M}
\le c\sum_{\eta\in\cZ_\nu}
\frac{h_\eta}{\big(1+a^\nu\rho(\xi, \eta)\big)^M},
\end{equation}
where the constant $c$ is independent of $f$, $\xi$, and $\nu$.
In \eqref{H-h} we can choose $\kappa$ to depend only on $d$, $a$, and $M$.
Note that all $B(\zeta,a^{-\nu-\kappa})$ from the definition of $h_\eta$
satisfy $B(\zeta,a^{-\nu-\kappa})\subset B(\eta,2a^{-\nu})$.

We shall evaluate the integrals in the right-hand side of \eqref{est-conv}
for $x\in A_\xi$, $\xi\in\cZ_\ell$, $\ell=\left\lceil \beta n\right\rceil$,
and $\beta (n-2)\le \nu\le \beta n+2$, $n\ge j$.
Note that \eqref{disjoint} and \eqref{sph_cap} imply $|A_\eta|\sim a^{-\nu(d-1)}$ for $\eta\in\cZ_\nu$.
Using \eqref{H-h} we obtain
\begin{multline}\label{est-conv2}
\int_\SS\frac{a^{\nu(d-1)}|\Ups_\nu*f(y)|}{\big(1+a^\nu\rho(x, y)\big)^M} d\sigma(y)
=\sum_{\eta\in\cZ_\nu}
\int_{A_\eta}\frac{a^{\nu(d-1)}|\Ups_\nu*f(y)|}{\big(1+a^\nu\rho(x, y)\big)^M} d\sigma(y)\\
\le c\sum_{\eta\in\cZ_\nu}
\frac{H_\eta}{\big(1+a^\nu\rho(\xi, \eta)\big)^M}
\le c\sum_{\eta\in\cZ_\nu}
\frac{h_\eta}{\big(1+a^\nu\rho(\xi, \eta)\big)^M}.
\end{multline}
We claim that
\begin{equation}\label{est-conv21}
\bigg(\int_\SS\frac{a^{\nu(d-1)}|\Ups_\nu*f(y)|}{\big(1+a^\nu\rho(x, y)\big)^M} d\sigma(y)\bigg)^q
\le c\sum_{\eta\in\cZ_\nu}
\frac{h_\eta^q}{\big(1+a^\nu\rho(\xi, \eta)\big)^{Mq_*}}.
\end{equation}
Indeed, for $0<q\le 1$  \eqref{est-conv21} follows from \eqref{est-conv2} and the $q$-inequality.
To prove \eqref{est-conv21} for $1<q<\infty$ we apply  H\"{o}lder's inequality to obtain
\begin{equation*}
\bigg(\sum_{\eta\in\cZ_\nu} \frac{h_\eta}{\big(1+a^\nu\rho(\xi, \eta)\big)^{M}}\bigg)^q
\le \bigg(\sum_{\eta\in\cZ_\nu} \frac{1}{\big(1+a^\nu\rho(\xi, \eta)\big)^{M}}\bigg)^{q-1}
\sum_{\eta\in\cZ_\nu} \frac{h_\eta^q}{\big(1+a^\nu\rho(\xi, \eta)\big)^{M}}.
\end{equation*}
Now, \eqref{est-conv21} follows from this, \eqref{est-conv2}, and \lemref{lem:Besov_1} with $m=0$.

Using \eqref{est-conv} and \eqref{est-conv21} we get for $\xi\in\cZ_\ell$ and $x\in A_\xi$
\begin{multline*}
|\Phi_n*f(x)|^q
\le c\sum_{\beta (n-2)\le \nu\le \beta n+2} \sum_{\eta\in\cZ_\nu}\frac{h_\eta^q}{\big(1+a^\nu\rho(\xi, \eta)\big)^{Mq_*}} \\
\le c\sum_{\beta (n-2)\le \nu\le \beta n+2} \sum_{\eta\in\cZ_\nu}\frac{|B(\eta,2a^{-\nu})|^{-1}}{\big(1+a^\nu\rho(\xi, \eta)\big)^{Mq_*}}
\int_{B(\eta,2a^{-\nu})}|\Ups_\nu*f(y)|^q d\sigma(y).
\end{multline*}
Above we also used that if $h_\eta$ is attained for $\zeta=\zeta_0$, then
\begin{align*}
h_\eta^q &\le \frac{1}{|B(\zeta_0,a^{-\nu-\kappa})|}\int_{B(\zeta_0,a^{-\nu-\kappa})}|\Ups_\nu*f(y)|^q d\sigma(y)
\\
&\le \frac{(2a^\kappa)^{d-1}}{|B(\eta,2a^{-\nu})|}\int_{B(\eta,2a^{-\nu})}|\Ups_\nu*f(y)|^q d\sigma(y).
\end{align*}
Hence, for any $\xi\in\cZ_\ell$
\begin{multline}\label{est-conv3}
\int_{A_\xi} \big(b^{sn}|\Phi_n*f(x)|\big)^q d\sigma(x)\\
\le c\sum_{\beta (n-2)\le \nu\le \beta n+2} \sum_{\eta\in\cZ_\nu}\frac{1}{\big(1+a^\nu\rho(\xi, \eta)\big)^{Mq_*}}
\int_{B(\eta,2a^{-\nu})}(a^{s\nu}|\Ups_\nu*f(y)|)^q d\sigma(y).
\end{multline}

%Now we aggregate the terms in \eqref{est-conv3} from levels $\ell$ and $\nu$ to level
Set $k:=\left\lceil \beta(j-2)\right\rceil$ and
note that $\ell,\nu\ge k$ for $n\ge j$.
Fix $\omega, \zeta\in\cZ_k$.
We claim that for any $\eta\in\cZ_\nu$ such that  $B(\eta,2a^{-\nu})\cap A_\zeta\ne\emptyset$ we have
\begin{equation}\label{est-conv31}
\sum_{\substack{\xi\in\cZ_\ell\\ A_\xi\cap A_\omega\ne\emptyset}}\frac{1}{\big(1+a^\nu\rho(\xi, \eta)\big)^{Mq_*}}
\le \frac{c}{\big(1+a^k\rho(\omega, \zeta)\big)^{Mq_*}}.
\end{equation}
Indeed, if $\rho(\omega,\zeta)\le 6a^{-k}$ then \lemref{lem:Besov_1} with $m=0$ and $a^\nu\sim a^\ell$ give
\begin{equation*}
\sum_{\substack{\xi\in\cZ_\ell\\ A_\xi\cap A_\omega\ne\emptyset}}\frac{1}{\big(1+a^\nu\rho(\xi, \eta)\big)^{Mq_*}} \\
\le\sum_{\xi\in\cZ_\ell}\frac{c}{\big(1+a^\ell\rho(\xi, \eta)\big)^{Mq_*}}
\le c,
\end{equation*}
which implies \eqref{est-conv31}.
On the other hand, if $\rho(\omega,\zeta)> 6a^{-k}$ then $a^{k}\rho(\xi, \eta)> 1$
and $\rho(\xi, \eta)> \rho(\omega,\zeta)/6$ for every $\xi\in\cZ_\ell$ such that $A_\xi\cap A_\omega\ne\emptyset$.
Note that \eqref{disjoint} and \eqref{sph_cap2} imply
$\#\{\xi\in\cZ_\ell : A_\xi\cap A_\omega\ne\emptyset\}\le c a^{(\ell-k)(d-1)}\le c a^{(\nu-k)(d-1)}$. Then
\begin{align*}
\sum_{\substack{\xi\in\cZ_\ell\\ A_\xi\cap A_\omega\ne\emptyset}} &\frac{1}{\big(1+a^\nu\rho(\xi, \eta)\big)^{Mq_*}}
\\
&\le\sum_{\substack{\xi\in\cZ_\ell\\ A_\xi\cap A_\omega\ne\emptyset}}
\frac{1}{\big(a^\nu\rho(\xi, \eta)\big)^{d-1}}\frac{1}{\big(1+a^\nu\rho(\xi, \eta)\big)^{Mq_*-d+1}}
\\
&\le\frac{c a^{(\nu-k)(d-1)}}{\big(a^\nu\rho(\omega, \zeta)\big)^{d-1}}\frac{1}{\big(1+a^k\rho(\omega, \zeta)\big)^{Mq_*-d+1}}
\le \frac{c}{\big(1+a^k\rho(\omega, \zeta)\big)^{Mq_*}},
\end{align*}
which completes the proof of \eqref{est-conv31}.

Using \eqref{est-conv3} and \eqref{est-conv31} we get, for any $\omega\in\cZ_k$,
\begin{align*}
\int_{A_\omega} &\big(b^{sn}|\Phi_n*f(x)|\big)^q d\sigma(x)
\le \sum_{\substack{\xi\in\cZ_\ell\\ A_\xi\cap A_\omega\ne\emptyset}}\int_{A_\xi} \big(b^{sn}|\Phi_n*f(x)|\big)^q d\sigma(x)
\\
&\le c\sum_{\beta (n-2)\le \nu\le \beta n+2} \sum_{\eta\in\cZ_\nu}\sum_{\substack{\xi\in\cZ_\ell\\ A_\xi\cap A_\omega\ne\emptyset}}
\frac{\int_{B(\eta,2a^{-\nu})}(a^{s\nu}|\Ups_\nu*f(y)|)^q d\sigma(y)}{\big(1+a^\nu\rho(\xi, \eta)\big)^{Mq_*}}
\\
&\le c\sum_{\beta (n-2)\le \nu\le \beta n+2} \sum_{\zeta\in\cZ_k}\sum_{\substack{\eta\in\cZ_\nu\\ B(\eta,2a^{-\nu})\cap A_\zeta\ne\emptyset}}
\frac{\int_{B(\eta,2a^{-\nu})}(a^{s\nu}|\Ups_\nu*f(y)|)^q d\sigma(y)}{\big(1+a^k\rho(\omega, \zeta)\big)^{Mq_*}}
\\
&\le c\sum_{\beta (n-2)\le \nu\le \beta n+2} \sum_{\zeta\in\cZ_k}\frac{1}{\big(1+a^k\rho(\omega, \zeta)\big)^{Mq_*}}
\int_{B(\zeta,5a^{-k})}(a^{s\nu}|\Ups_\nu*f(y)|)^q d\sigma(y).
\end{align*}
In the last inequality we use that for every $\zeta\in\cZ_k$ and $\eta\in\cZ_\nu$ such that $B(\eta,2a^{-\nu})\cap A_\zeta\ne\emptyset$
we have $B(\eta,2a^{-\nu})\subset B(\zeta,5a^{-k})$ and $\|\sum_{\eta\in\cZ_\nu}\ONE_{B(\eta,2a^{-\nu})}\|_{L^\infty(\SS)}\le c$.
Hence
\begin{multline}\label{est-conv33}
\sum_{n\ge j}\int_{A_\omega} \big(b^{sn}|\Phi_n*f(x)|\big)^q d\sigma(x)\\
\le c\sum_{\zeta\in\cZ_k}\frac{1}{\big(1+a^k\rho(\omega, \zeta)\big)^{Mq_*}}
\sum_{\nu\ge k} \int_{B(\zeta,5a^{-k})}(a^{s\nu}|\Ups_\nu*f(y)|)^q d\sigma(y).
\end{multline}

Using that $B(z,r)$ is covered by at most a fixed number of sets $A_\omega$, $\omega\in\cZ_k$,
and $|B(z,r)|\sim |B(\zeta,5a^{-k})|$, $\zeta\in\cZ_k$,
we obtain from \eqref{est-conv33}, \eqref{F-norm-a},
and \lemref{lem:Besov_1} with $m=0$ that
\begin{align}\label{est-conv5}
\cN_{z,r}^q&=\frac{1}{|B(z,r)|}\int_{B(z,r)} \sum_{n\ge j}\big(b^{sn}|\Phi_n*f(x)|\big)^q d\sigma(x) 
\\
&\le c\sum_{\substack{\omega\in\cZ_k\\ B(z,r)\cap A_\omega\ne\emptyset}}
\sum_{\zeta\in\cZ_k}\frac{|B(\zeta,5a^{-k})|^{-1}}{\big(1+a^k\rho(\omega, \zeta)\big)^{Mq_*}}
\int_{B(\zeta,5a^{-k})}\sum_{\nu\ge k} (a^{s\nu}|\Ups_\nu*f(y)|)^q d\sigma(y) \nonumber
\\
&\le c \|f\|_{\cF^{s q}_\infty(a,\Ups)}^q. \nonumber
\end{align}
In the last inequality of \eqref{est-conv5} we also use that the summation in \eqref{F-norm-a}
for the spherical cap $B(\zeta,5a^{-k})$ starts with $\left\lfloor \log_{a}\pi/(5a^{-k})\right\rfloor=k-1$.
Inequality \eqref{est-conv5} and definition \eqref{F-norm-infty0} lead to
$\|f\|_{\cF^{sq}_\infty}\le c \|f\|_{\cF^{sq}_\infty(a,\Ups)}$ for $q<\infty$.
For $q=\infty$ the same inequality follows directly from \eqref{est-conv} using \eqref{F-infty-infty} and \eqref{eq:conv_1}.
Interchanging the roles of $\Ups$, $\Phi$ and $a$, $b$, respectively, we get from above
$\|f\|_{\cF^{sq}_\infty(a,\Ups)} \le c\|f\|_{\cF^{sq}_\infty}$, which completes the proof.
\end{proof}

\propref{prop:independ} and \propref{prop:TL-infty}~b) immediately imply the following

\begin{cor}\label{cor:independ3}
The definition of the Separable Triebel-Lizorkin spaces $\sepTL{\cF}{s}{q}(\SS)$,
$s\in\RR$, $0< q\le\infty$, $($Definition~\ref{def:VF-infty}$)$ is independent of the choice of $b>1$ and $\varphi$.
\end{cor}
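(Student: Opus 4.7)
The plan is a short reduction to the two preceding results, which together do essentially all the work.

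First, \propref{prop:independ} asserts that for any two admissible pairs $(b_1,\varphi_1)$ and $(b_2,\varphi_2)$ satisfying \eqref{adm}, the corresponding quasi-norms on $\cF^{sq}_\infty(\SS)$ are equivalent. Consequently the ambient quasi-Banach space $\cF^{sq}_\infty(\SS)$ carries a single well-defined topology, and in particular the closure in $\cF^{sq}_\infty(\SS)$ of any fixed subset does not depend on which admissible pair is used to compute the quasi-norm.

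Second, \propref{prop:TL-infty}~(b) identifies $\sepTL{\cF}{s}{q}(\SS)$, defined via the vanishing condition \eqref{VF-norm-infty0} for $q<\infty$ (respectively \eqref{VF-infty-infty} for $q=\infty$) with respect to a fixed $(b,\varphi)$, as the closure of $C^\infty(\SS)$ in the associated $\cF^{sq}_\infty$ quasi-norm. Applying this characterization to each admissible pair $(b_1,\varphi_1)$ and $(b_2,\varphi_2)$, the resulting separable spaces are both equal to the closure of the fixed set $C^\infty(\SS)$ in equivalent quasi-norms, and therefore they coincide as subsets of $\cF^{sq}_\infty(\SS)$, with equivalent inherited quasi-norms.

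I do not anticipate any real obstacle: the content has been absorbed into \propref{prop:independ} and \propref{prop:TL-infty}~(b). The only item to check is that \propref{prop:TL-infty}~(b) is genuinely stated in a way that is valid for \emph{every} admissible choice of $(b,\varphi)$ entering Definition~\ref{def:VF-infty}, so that the closure identification can be invoked twice in parallel; this is the case as the proposition is formulated for a generic admissible pair.
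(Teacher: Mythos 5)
Your reduction is exactly the paper's: the authors deduce Corollary~\ref{cor:independ3} in one line from \propref{prop:independ} (equivalence of the $\cF^{sq}_\infty$ quasi-norms for different admissible $(b,\varphi)$) together with \propref{prop:TL-infty}~(b) (identification of $\sepTL{\cF}{s}{q}$ as the closure of $C^\infty(\SS)$), and you supply precisely the intermediate observation --- that a closed subspace is determined by the topology, which is independent of the equivalent quasi-norm used to define it --- that makes the implication ``immediate.'' Correct, and same route.
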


In the case $p<\infty$ we have
\begin{prop}\label{prop:independ2}
The definition of the Besov spaces $\cB^{sq}_p$ $(s\in\RR$, $0<p, q\le\infty)$ and Triebel-Lizorkin spaces $\cF^{sq}_p$
$(s\in\RR$, $0<p<\infty$, $0<q\le\infty)$ $($see Definition~\ref{def:B-F-spaces}$)$ is independent of the choice of $b>1$ and $\varphi$.
\end{prop}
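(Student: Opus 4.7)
The plan is to mirror the structure of the proof of Proposition~\ref{prop:independ} but truncate it earlier, since when $p<\infty$ a vector-valued maximal inequality replaces the delicate averaging-over-balls argument. Fix two admissible pairs $(b,\aa)$ and $(a,\ups)$ producing the respective Littlewood--Paley pieces $\Phi_n*f$ and $\Ups_\nu*f$. Using \cite[Lemma 6.9]{FJW} pick $\tilde\ups$ with the same support/positivity properties as $\ups$ so that $f=\sum_{\nu\ge 0}\widetilde\Ups_\nu*\Ups_\nu*f$ (convergence in $\cS'$), and let $\beta>0$ satisfy $b=a^\beta$. Then the spectral supports force
\[
\Phi_n*f=\sum_{\beta(n-2)\le\nu\le\beta n+2}\Phi_n*\widetilde\Ups_\nu*\Ups_\nu*f.
\]
Exactly as in the derivation of \eqref{est-conv}, \thmref{thm:localization} together with the convolution-localization lemma \cite[Proposition 2.5]{IP2} yields, for any $M>0$ and any admissible $\nu$,
\[
|\Phi_n*f(x)|\le c\sum_{\beta(n-2)\le\nu\le\beta n+2}\int_\SS \frac{a^{\nu(d-1)}|\Ups_\nu*f(y)|}{(1+a^\nu\rho(x,y))^M}\,d\sigma(y).
\]

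The key pointwise step is a Peetre-type submaximalization: since $\Ups_\nu*f$ is (essentially) band-limited at scale $a^\nu$, for any $0<r<1$ and $M$ large enough depending on $r$, one has
\[
\int_\SS \frac{a^{\nu(d-1)}|\Ups_\nu*f(y)|}{(1+a^\nu\rho(x,y))^M}\,d\sigma(y)\le c\bigl(\cM(|\Ups_\nu*f|^r)(x)\bigr)^{1/r},
\]
where $\cM$ is the Hardy--Littlewood maximal operator on $\SS$; this is the standard argument on spaces of homogeneous type and is already a working tool in \cite{NPW2,IP2}. Choose $r<\min(1,p)$ in the Besov case and $r<\min(1,p,q)$ in the Triebel--Lizorkin case.

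For the Besov norm, take $L^p$-norms, invoke boundedness of $\cM$ on $L^{p/r}$ (so $\|\cM(|g|^r)^{1/r}\|_p\le c\|g\|_p$), and use that $b^{sn}\sim a^{s\nu}$ uniformly on the range $\beta(n-2)\le\nu\le\beta n+2$ to obtain
\[
b^{sn}\|\Phi_n*f\|_p\le c\sum_{\beta(n-2)\le\nu\le\beta n+2} a^{s\nu}\|\Ups_\nu*f\|_p.
\]
Taking the $\ell^q$-norm in $n$ and summing the finitely many overlaps gives $\|f\|_{\cB_p^{sq}(b,\aa)}\le c\|f\|_{\cB_p^{sq}(a,\ups)}$. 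For the Triebel--Lizorkin norm, apply the Fefferman--Stein vector-valued maximal inequality in $L^{p/r}(\ell^{q/r})$, valid because $p/r,q/r>1$, to the pointwise bound above; after undoing the $r$-th power one arrives at
\[
\Bigl\|\bigl(\textstyle\sum_n(b^{sn}|\Phi_n*f|)^q\bigr)^{1/q}\Bigr\|_p\le c\Bigl\|\bigl(\textstyle\sum_\nu(a^{s\nu}|\Ups_\nu*f|)^q\bigr)^{1/q}\Bigr\|_p,
\]
i.e.\ $\|f\|_{\cF_p^{sq}(b,\aa)}\le c\|f\|_{\cF_p^{sq}(a,\ups)}$. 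Interchanging the roles of the two pairs finishes the proof.

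The main obstacle is the Peetre pointwise domination in the sub-unit regime $p<1$ or $q<1$, where Young's inequality is not available; one must choose $r$ small enough that $\cM$ is bounded on $L^{p/r}$ (and the Fefferman--Stein inequality applies at exponents $p/r,q/r>1$), and $M$ correspondingly large so that $(1+a^\nu\rho(x,\cdot))^{-M}$ dominates the maximal-function majorant uniformly in $\nu$. Once this submaximalization is in place, the rest of the argument is essentially bookkeeping and is considerably shorter than in the $p=\infty$ case treated in \propref{prop:independ}.
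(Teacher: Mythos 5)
Your proof is correct, and the overall shape (start with the Calder\'on reproducing identity for the second pair, restrict $\nu$ by spectral supports, pass to a localized kernel bound for $\Phi_n*\widetilde\Ups_\nu$, then control the resulting integral) coincides with the opening of the paper's proof of \propref{prop:independ}. Where you diverge is in how you pass from the pointwise convolution bound to the norm estimate: you invoke the Peetre submaximalization $\int_\SS a^{\nu(d-1)}|\Ups_\nu*f(y)|(1+a^\nu\rho(x,y))^{-M}\,d\sigma(y)\le c(\cM(|\Ups_\nu*f|^r)(x))^{1/r}$ (valid because $\Ups_\nu*f$ is a spherical polynomial of degree $\lesssim a^\nu$, with $M>(d-1)(1+1/r)$), and then the scalar resp.\ Fefferman--Stein vector-valued maximal inequality at exponents $p/r>1$, $q/r>1$. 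The paper, by contrast, builds \propref{prop:independ} on a \emph{discrete} $\delta$-net submaximalization (the $H_\eta$ vs.\ $h_\eta$ comparison \eqref{H-h}, borrowed from \cite[Lemma~4.7]{NPW2}) followed by careful ball-averaging --- a route designed precisely because Fefferman--Stein is unavailable at $p=\infty$ --- and then declares the $p<\infty$ case ``similar but simpler.'' So the paper's intended $p<\infty$ argument is the $\delta$-net version of yours, with the final integration against $|B(z,r)|^{-1}\ONE_{B(z,r)}$ replaced by a global $L^p$ norm. Both are valid for $p<\infty$. Your approach is the classical Frazier--Jawerth mechanism and is more transparent; the paper's $\delta$-net route has the advantage of being uniform with the $p=\infty$ treatment, so that one proof template handles \propref{prop:independ} and \propref{prop:independ2} at once. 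One small clean-up worth stating explicitly: at the sub-unit indices the exponent $M$ in the localization estimate must be taken $\ge (d-1)(1+1/r)$ with $r<\min(1,p)$ (Besov) or $r<\min(1,p,q)$ (Triebel--Lizorkin), exactly as you note, and the case $q=\infty$ in the $\cF$-scale reduces to the scalar maximal inequality applied to $\sup_\nu a^{s\nu}|\Ups_\nu*f|$, so no extra argument is needed there.
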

\begin{proof}
The proof is similar to the proof of \propref{prop:independ} but simpler; we omit it.
\end{proof}

\medskip

The embeddings of the Triebel-Lizorkin spaces $\cF^{sq}_\infty$ and the Besov spaces $\cB^{sq}_\infty$
(see \eqref{B-norm}) are as follows (cf. \eqref{eq:2}, \eqref{eq:8b})
\begin{equation}\label{eq:2-infty}
\cF^{sq_0}_\infty\subset \cF^{sq_1}_\infty,\quad \hbox{if}\;\;q_0\le q_1;
\end{equation}

\begin{equation}\label{eq:8b-infty}
\cB^{sq}_{\infty}\subset \cF^{sq}_{\infty}\subset \cF^{s\infty}_{\infty}=\cB^{s\infty}_{\infty},\quad  \hbox{if}\;\;q<\infty;
\end{equation}

The definition of the Triebel-Lizorkin spaces
$\cF^{sq}_\infty$, $s\in\RR$, $0<q\le\infty$, (Definition~\ref{def:F-infty}) on the sphere $\SS$ with $b=2$
is similar to the definition of the corresponding Triebel-Lizorkin spaces on $\RR^d$, see \cite[\S5 and \S12]{FJ2}.
In order to obtain a decomposition of $\cF^{sq}_\infty(\SS)$ similar to the decomposition of $\cF^{sq}_\infty(\RR^d)$ from \cite{FJ2}
we need a nested structure of open sets on $\SS$ with the main properties of the dyadic cubs on $\RR^d$.
Such a structure is provided by \thmref{nested_sets} under the requirement $b>3$.
For this reason we define the Triebel-Lizorkin spaces in \eqref{F-norm-infty0}
with a general base $b$ instead of using the standard base $2$.

We consider the Triebel-Lizorkin spaces $\cF^{sq}_\infty$
with parameters $(s, q)$  in the range
\begin{equation}\label{indices-infty}
\QQ_\infty=\QQ_\infty(A):=\big\{(s, q): |s| \le A,\; \;\hbox{and}\;A^{-1}\le q\le\infty\big\},
\end{equation}
where $A >1$ is a fixed constant.
Condition \eqref{indices-infty} is the analog for $p=\infty$ of condition \eqref{indices-1}.

\begin{prop}\label{def:equiv_F-infty}
Let $(s,q)\in \QQ_\infty(A)$, $A>1$.
Under the assumptions on $\varphi$ of Definition~\ref{def:B-F-spaces} with $b>3$ we have
\begin{equation}\label{F-norm-infty}
\|f\|_{\cF^{s q}_\infty} \sim
\sup_{j\ge 0}\sup_{\xi\in\cX_j}\bigg(\frac{1}{|Q_\xi|}\int_{Q_\xi}\sum_{n\ge j} \big(b^{sn}|\Phi_n*f(x)|\big)^q d\sigma(x)\bigg)^{1/q},
\end{equation}
with constants of equivalence depending on $d, b, \betaw, A$.
Here $\cX_j$ is from \eqref{index_set}
and $Q_\xi$, $\xi\in\cX_j$, are defined in \thmref{nested_sets} and Remark~\ref{rem:3}.
\end{prop}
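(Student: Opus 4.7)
The plan is to prove the two-sided bound $R(f)\sim\|f\|_{\cF^{sq}_\infty}$, where $R(f)$ denotes the right-hand side of \eqref{F-norm-infty}, by a clean matching of the two scale parameters: to each level-$j$ node $\xi\in\cX_j$ one assigns a spherical cap $B(\xi,\pi b^{-j})$ whose radius produces the same threshold $j_r=j$, and conversely every cap $B(y,r)$ is covered by $O(1)$ tiles $Q_\xi$ at the matched level $j=j_r$. The case $q=\infty$ is immediate: both sides reduce to $\sup_n b^{sn}\|\Phi_n*f\|_{L^\infty(\SS)}$ because $\{Q_\xi\}_{\xi\in\cX_n}$ tiles $\SS$ modulo a null set by \eqref{dyadic_eq:1}--\eqref{dyadic_eq:2}. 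So assume $0<q<\infty$.

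For the direction $R(f)\le c\|f\|_{\cF^{sq}_\infty}$, I would fix $\xi\in\cX_j$ and set $y:=\xi$, $r:=\pi b^{-j}\le\pi$. Since $\tfrac{b}{b-1}\gamma<\pi$ (using $b>3$ and $\gamma=\bar{\gamma}/2<1/2$), \eqref{dyadic_eq:4} gives $Q_\xi\subset B(y,r)$, and \eqref{sph_cap} together with \eqref{dyadic_eq:6} gives $|B(y,r)|\sim|Q_\xi|\sim b^{-j(d-1)}$; moreover $j_r=\lfloor\log_b(\pi/r)\rfloor=j$, so the summation thresholds coincide. Thus
\begin{equation*}
\frac{1}{|Q_\xi|}\int_{Q_\xi}\sum_{n\ge j}\big(b^{sn}|\Phi_n*f|\big)^q d\sigma\le \frac{c}{|B(y,r)|}\int_{B(y,r)}\sum_{n\ge j_r}\big(b^{sn}|\Phi_n*f|\big)^q d\sigma\le c\|f\|_{\cF^{sq}_\infty}^q,
\end{equation*}
and taking $\sup_{j,\xi}$ yields $R(f)\le c\|f\|_{\cF^{sq}_\infty}$.

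For the reverse direction $\|f\|_{\cF^{sq}_\infty}\le cR(f)$, given $B(y,r)$ with $0<r\le\pi$, I would set $j:=j_r$ so that $b^{-j}\sim r$. By \eqref{dyadic_eq:1}--\eqref{dyadic_eq:2} the family $\{Q_\xi\}_{\xi\in\cX_j}$ tiles $\SS$ modulo a null set, by \eqref{dyadic_eq:4} each $Q_\xi$ has diameter $\lesssim b^{-j}$, and by \eqref{dyadic_eq:6} each $|Q_\xi|\sim|B(y,r)|$. Therefore the tiles meeting $B(y,r)$ form a set of bounded cardinality $K=K(d,b,\gamma,\betaw)$ whose union contains $B(y,r)$; since the summation thresholds agree ($n\ge j_r=j$), I obtain
\begin{equation*}
\frac{1}{|B(y,r)|}\int_{B(y,r)}\sum_{n\ge j_r}\big(b^{sn}|\Phi_n*f|\big)^q d\sigma\le c\sum_{i=1}^K\frac{1}{|Q_{\xi_i}|}\int_{Q_{\xi_i}}\sum_{n\ge j}\big(b^{sn}|\Phi_n*f|\big)^q d\sigma\le cKR(f)^q,
\end{equation*}
and taking the supremum over $(y,r)$ completes the argument.

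The only real obstacle is aligning the summation indices $j$ and $j_r$; this forces the specific choice $r=\pi b^{-j}$ in one direction and $j=j_r$ in the other. Everything else---bounded-overlap covering by tiles and comparability of measures---is a routine consequence of the nested structure of \thmref{nested_sets} and \propref{prop:nested_sets} combined with the spherical volume estimates \eqref{sph_cap}--\eqref{sph_cap2}.
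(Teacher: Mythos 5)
Your proof is correct and takes essentially the same approach as the paper's: in the easy direction you replace $Q_\xi$ by the matched cap $B(\xi,\pi b^{-j})$ using the inclusion \eqref{dyadic_eq:4} and the comparability of measures, and in the reverse direction you cover $B(y,r)$ by the boundedly many tiles $Q_\xi$ at the level $j=\lfloor\log_b(\pi/r)\rfloor$, observing that the summation thresholds align by construction. This is exactly the paper's argument (with the same handling of $q=\infty$ as an identity and the same treatment of the level-$0$ node $Q_{e_1}=\SS$).
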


\begin{proof}
For $q=\infty$ equivalence \eqref{F-norm-infty} is fulfilled as an equality. So, assume $q<\infty$.
Let $\xi\in\cX_j$, $j\in\NN$. Set $r=\pi b^{-j}$. Using \eqref{dyadic_eq:4} and \eqref{sph_cap2} we get
\begin{equation*}
B\Big(\xi,\frac{\bar{\beta} \bar{\gamma}}{2} b^{-j}\Big)\subset Q_\xi\subset
B\Big(\xi,\frac{b \bar{\gamma}}{2(b-1)} b^{-j}\Big)\subset B(\xi,r),\quad |Q_\xi|\sim |B(\xi,r)|.
\end{equation*}
Hence
\begin{multline*}
\frac{1}{|Q_\xi|}\int_{Q_\xi}\sum_{n\ge j} \big(b^{sn}|\Phi_n*f(x)|\big)^q d\sigma(x)\\
\le c \frac{1}{|B(\xi,r)|}\int_{B(\xi,r)}\sum_{n\ge j} \big(b^{sn}|\Phi_n*f(x)|\big)^q d\sigma(x)
\le c \|f\|_{\cF^{s q}_\infty}^q.
\end{multline*}
If $\xi\in\cX_0$, then we have $Q_\xi=B(\xi,r)$, $r=\pi$, and we can repeat the same arguments.
Hence, $\|f\|_\star\le c \|f\|_{\cF^{s q}_\infty}$, where $\|f\|_\star$ stands for the right-hand side norm in \eqref{F-norm-infty}.

For the proof of the inequality $\|f\|_{\cF^{s q}_\infty}\le c\|f\|_\star$ we take an arbitrary spherical cap $B(y,r)$, $y\in\SS$,
and choose $j\in\NN_0$ such that $\pi b^{-j-1}<r\le \pi b^{-j}$.
From \thmref{nested_sets}, \eqref{sph_cap}, and \eqref{dyadic_eq:6} we get
$\#\{\xi\in\cX_j : Q_\xi\cap B(y,r)\ne\emptyset\}\le c$ and $|Q_\xi|\sim |B(y,r)|$ for $\xi\in\cX_j$. Hence
\begin{multline*}
\frac{1}{|B(\xi,R)|}\int_{B(\xi,R)}\sum_{n\ge j} \big(b^{sn}|\Phi_n*f(x)|\big)^q d\sigma(x)\\
\le \sum_{\substack{\xi\in\cX_j\\ Q_\xi\cap B(y,r)\ne\emptyset}} \frac{|Q_\xi|}{|B(\xi,R)|}\frac{1}{|Q_\xi|}\int_{Q_\xi}\sum_{n\ge j} \big(b^{sn}|\Phi_n*f(x)|\big)^q d\sigma(x)
\le c\|f\|_\star^q,
\end{multline*}
which completes the proof.
\end{proof}

We are now ready to define the respective Triebel-Lizorkin \emph{sequence} spaces.

\begin{defn}\label{def:TL_infty}
Let $b>3$, $s\in \RR$, and $0<q\le\infty$. Then  the Triebel-Lizorkin sequence space $\ff_\infty^{sq}:=\ff_\infty^{sq}(\cX)$
is defined as the space of all sequences of complex numbers
$h:=\{h_{\xi}\}_{\xi\in \cX}$ such that
\begin{equation}\label{def-f-space-infty}
\|h\|_{\ff_\infty^{sq}} :=\sup_{\xi\in\cX}\bigg(\frac{1}{|Q_\xi|}\int_{Q_\xi}\sum_{Q_\eta\subset Q_\xi}
\big[|Q_\eta|^{-s/(d-1)-1/2}
|h_{\eta}|\ONE_{Q_\eta}(x)\big]^q d\sigma(x)\bigg)^{1/q} <\infty
\end{equation}
if $q<\infty$ and
\begin{equation}\label{def-f-space-infty1}
\|h\|_{\ff_\infty^{s\infty}} :=\sup_{\xi\in\cX}
|Q_\xi|^{-s/(d-1)-1/2} |h_{\xi}| <\infty.
\end{equation}
Here $\cX$ is from \eqref{index_set} and $Q_\xi$, $\xi\in\cX$, are defined in \thmref{nested_sets} $($see also Remark~\ref{rem:2}$)$.
\end{defn}

For $0<q<\infty$ one can carry out the integration in \eqref{def-f-space-infty} to obtain
\begin{equation}\label{def-f-space-infty2}
\|h\|_{\ff_\infty^{sq}}=\sup_{\xi\in\cX}\bigg(\frac{1}{|Q_\xi|}\sum_{Q_\eta\subset Q_\xi}
\big[|Q_\eta|^{-s/(d-1)-1/2}
|h_{\eta}|\big]^q |Q_\eta|\bigg)^{1/q},
\end{equation}

From \eqref{def-f-space-infty2} it follows that
\begin{equation}\label{def-f-space-infty3}
\|h\|_{\ff_\infty^{sq}}
=\|\{|Q_\eta|^{-s/(d-1)}h_\eta\}_{\eta\in\cX}\|_{\ff_\infty^{0q}},\quad s\in\RR,
\end{equation}
\begin{equation}\label{def-f-space-infty4}
\|h\|_{\ff_\infty^{sq}}
=\|\{|Q_\eta|^{-(s-\bar{s})/(d-1)}h_\eta\}_{\eta\in\cX}\|_{\ff_\infty^{\bar{s}q}}, \quad s,\bar{s}\in\RR.
\end{equation}

Using \eqref{dyadic_eq:6}
in \eqref{def-f-space-infty2} we obtain for every $(s, q)\in\QQ_\infty(A)$
\begin{equation}\label{def-f-space-infty5}
c^{-1}\|h\|_{\ff_\infty^{sq}}
\le \sup_{j\in\NN_0}\sup_{\xi\in\cX_j}\bigg(\sum_{k=0}^\infty\sum_{\substack{\eta\prec\xi\\ \eta\in\cX_{j+k}}}
\big[b^{(j+k)(s+(d-1)/2)}|h_{\eta}|\big]^q b^{-k(d-1)}\bigg)^{1/q}
\le c\|h\|_{\ff_\infty^{sq}}
\end{equation}
with constants $c$ depending only on $b, \gamma, \beta$ from \thmref{nested_sets}, on $A$ from \eqref{indices-infty} and on $d$.
The middle quantity of \eqref{def-f-space-infty5} does not depend of the particular choice of the sets $\{Q_\xi\}$ in \thmref{nested_sets}
but still requires the partial ordering of the elements of $\cX$.

\begin{defn}\label{def:VTL_infty}
The the separable Triebel-Lizorkin sequence space $\sepTL{\ff}{s}{q}:=\sepTL{\ff}{s}{q}(\cX)$
is defined as the space of all sequences of complex numbers
$h:=\{h_{\xi}\}_{\xi\in \cX}$ such that $h\in\ff_\infty^{sq}$ and
\begin{equation*}
\lim_{j\to\infty}\sup_{\xi\in\cX_j}\bigg(\frac{1}{|Q_\xi|}\sum_{Q_\eta\subset Q_\xi}
\big[|Q_\eta|^{-s/(d-1)-1/2}|h_{\eta}|\big]^q |Q_\eta|\bigg)^{1/q}=0,\quad q<\infty,
\end{equation*}
\begin{equation*}
\lim_{j\to\infty}\sup_{\xi\in\cX_j} 
|Q_\xi|^{-s/(d-1)-1/2} |h_{\xi}| = 0,
\quad q=\infty.
\end{equation*}
Also, $\sepTL{\ff}{s}{q}$ inherits the norm from $\ff^{sq}_\infty$.
\end{defn}

From Definitions~\ref{def:TL_infty} and \ref{def:VTL_infty} we immediately get the following properties.

\begin{prop}\label{prop:sep-TL-infty}
Let $s\in\RR$, $0<q\le\infty$. Then:

$(a)$
$\ff_\infty^{sq}$ and $\sepTL{\ff}{s}{q}$ are $($quasi-$)$Banach spaces.

$(b)$
$\sepTL{\ff}{s}{q}$ is the closure of the compactly supported sequences in the $\ff_\infty^{sq}$ norm.

$(c)$
$\sepTL{\ff}{s}{q}$ is a \emph{separable space}, while the space $\ff_\infty^{sq}$ is \emph{non-separable}.

\end{prop}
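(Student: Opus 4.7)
The plan is to handle the three parts in order, exploiting the nested structure of $\{Q_\xi\}$ from \thmref{nested_sets} and \propref{prop:nested_sets}.

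For part (a), the starting observation is that \eqref{def-f-space-infty2} gives the pointwise bound
\begin{equation*}
|Q_\xi|^{-s/(d-1)-1/2}|h_\xi|\le \|h\|_{\ff_\infty^{sq}}, \quad \xi\in\cX,
\end{equation*}
by retaining only the $\eta=\xi$ summand. Hence a Cauchy sequence $\{h^{(n)}\}\subset \ff_\infty^{sq}$ converges coordinatewise to some $h=\{h_\xi\}_{\xi\in\cX}$, and Fatou's lemma applied inside the sum in \eqref{def-f-space-infty2} (respectively the supremum in \eqref{def-f-space-infty1}) gives $h\in\ff_\infty^{sq}$ and $\|h^{(n)}-h\|_{\ff_\infty^{sq}}\to 0$. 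That $\sepTL{\ff}{s}{q}$ is closed inside $\ff_\infty^{sq}$ is routine: an $\ff_\infty^{sq}$-limit of sequences satisfying the vanishing condition still satisfies it by the quasi-triangle inequality.

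For part (b), the truncation $h^{(N)}_\eta := h_\eta\ONE_{\text{level}(\eta)\le N}$ is finitely supported, and it suffices to prove $\|h-h^{(N)}\|_{\ff_\infty^{sq}}\to 0$ for $h\in\sepTL{\ff}{s}{q}$. For $\xi\in\cX_j$ with $j>N$ the $Q_\xi$-block of $h-h^{(N)}$ coincides with the $Q_\xi$-block of $h$, and this contribution tends to $0$ by the defining vanishing condition. For $\xi\in\cX_j$ with $j\le N$, I decompose $Q_\xi$ via \eqref{dyadic_eq:5} into its descendants at level $N+1$, writing
\begin{equation*}
\sum_{\substack{Q_\eta\subset Q_\xi\\\text{level}(\eta)\ge N+1}}\big[|Q_\eta|^{-s/(d-1)-1/2}|h_\eta|\big]^q|Q_\eta|
=\sum_{\substack{\eta'\in\cX_{N+1}\\Q_{\eta'}\subset Q_\xi}}|Q_{\eta'}|\cdot\frac{1}{|Q_{\eta'}|}\sum_{Q_\eta\subseteq Q_{\eta'}}\big[\cdots\big]^q|Q_\eta|,
\end{equation*}
bound each inner factor by $\varepsilon_N^q$ where $\varepsilon_N$ is the vanishing quantity at level $N+1$, and sum $|Q_{\eta'}|$ over the descendants of $\xi$ at level $N+1$ to recover $|Q_\xi|$. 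Dividing by $|Q_\xi|$ yields $\le\varepsilon_N^q$ uniformly in $\xi$. The $q=\infty$ case is analogous and easier. The bookkeeping here is the main technical point of the proposition.

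For part (c), separability of $\sepTL{\ff}{s}{q}$ is immediate from (b) by taking the countable collection of finitely supported sequences with coefficients in $\mathbb{Q}+i\mathbb{Q}$. For non-separability of $\ff_\infty^{sq}$ I will exhibit an uncountable $1$-separated family. Using part (2) of \propref{prop:nested_sets} ($b\ge 4$ from \remref{rem:1}), I inductively build a sequence $\{\xi^*_j\}_{j\ge 1}$ with $\xi^*_j\in\cX_j$ lying in pairwise disjoint branches, i.e.\ $\xi^*_j\not\prec\xi^*_k$ for $j\ne k$; this uses that each $v_j\in\cX_j$ admits at least two children, so one can split off $\xi^*_{j+1}$ while reserving a sibling to continue the chain. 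For $\alpha=(\alpha_j)\in\{0,1\}^\NN$ define
\begin{equation*}
h^\alpha_\eta:=\alpha_j|Q_{\xi^*_j}|^{s/(d-1)+1/2}\ \text{if}\ \eta=\xi^*_j,\quad h^\alpha_\eta:=0\ \text{otherwise}.
\end{equation*}
By the disjoint-branches property, any given $Q_\xi$ contains at most one $\xi^*_j$, so $\|h^\alpha\|_{\ff_\infty^{sq}}\le 1$. If $\alpha_{j_0}\ne\beta_{j_0}$, then choosing $\xi=\xi^*_{j_0}$ gives $\|h^\alpha-h^\beta\|_{\ff_\infty^{sq}}\ge 1$ directly from \eqref{def-f-space-infty2}, completing the proof of non-separability. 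The main obstacle is the construction of the skewed chain together with the computation that a single $\xi^*_j$ falls inside any $Q_\xi$; once this combinatorial fact is in hand, the rest follows by direct substitution into the sequence norm.
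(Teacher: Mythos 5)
The paper states this proposition with no written proof, declaring it ``immediate'' from Definitions~\ref{def:TL_infty} and \ref{def:VTL_infty}, so there is no proof in the paper to compare against; you are supplying the details the authors omitted. Your argument follows the natural path — coordinatewise limits plus Fatou for completeness, level truncation combined with the telescoping identity \eqref{dyadic_eq:5} for density of the finitely supported sequences in part~(b), and an uncountable $1$-separated family built from a skew chain of siblings for non-separability in part~(c) — and it is correct in substance.

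One local flaw in part~(c): the assertion that ``any given $Q_\xi$ contains at most one $\xi^*_j$'' is false. For instance $Q_{e_1}=\SS$ contains every $\xi^*_j$, and more generally each $Q_{v_n}$ contains $\xi^*_j$ for all $j>n$. What your construction actually delivers, and what the estimate needs, is that the sets $\{Q_{\xi^*_j}\}_{j\ge 1}$ are pairwise disjoint (this follows from $\xi^*_j\not\prec\xi^*_k$ for $j\ne k$ together with \eqref{dyadic_eq:2}). With that in hand, for any $\xi\in\cX$ and $q<\infty$,
\begin{equation*}
\frac{1}{|Q_\xi|}\sum_{Q_\eta\subset Q_\xi}\big[|Q_\eta|^{-s/(d-1)-1/2}|h^\alpha_\eta|\big]^q|Q_\eta|
=\frac{1}{|Q_\xi|}\sum_{j:\ \xi^*_j\prec\xi}\alpha_j^q\,|Q_{\xi^*_j}|
\le\frac{1}{|Q_\xi|}\sum_{j:\ \xi^*_j\prec\xi}|Q_{\xi^*_j}|\le 1,
\end{equation*}
because the disjoint sets $Q_{\xi^*_j}$ appearing in the sum are all contained in $Q_\xi$. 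The separation estimate at $\xi=\xi^*_{j_0}$ and the $q=\infty$ case go through exactly as you wrote them. With this replacement the proof of~(c) is complete.
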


\subsection{Almost diagonal operators}

The almost diagonal matrices we shall use are

\begin{defn}\label{def:star-star-seq}
Let $M,K>0$. Define $N_\xi=b^j$ for $\xi\in\cX_j$, $j\in\NN_0$.
Set $\Omega^{(K,M)}=\{\omega_{\xi,\eta}^{(K,M)} : \xi,\eta\in\cX\}$, where
\begin{equation}\label{eq:omega_xi_eta}
\omega_{\xi,\eta}^{(K,M)}=\omega_{\xi,\eta}
:=\left(\frac{\min\{N_\xi,N_\eta\}}{\max\{N_\xi,N_\eta\}}\right)^{K+(d-1)/2}
\frac{1}{\left(1+\min\{N_\xi,N_\eta\}\rho(\xi,\eta)\right)^M}.
\end{equation}
\begin{equation}\label{omega_infty_xi_eta}
\omega_{\xi,\eta}^{(\infty,M)}
:=\left\{\begin{array}{l l}
 \left(1+N_\xi\rho(\xi,\eta)\right)^{-M},& N_\eta=N_\xi;\\
0,&N_\eta\ne N_\xi.
\end{array}\right.
\end{equation}
\end{defn}

Note that for any $K, M, \xi, \eta$ we have $\omega_{\xi,\eta}^{(\infty,M)}\le\omega_{\xi,\eta}^{(K,M)}$.

From Definition~\ref{def:star-star-seq} and \lemref{lem:Besov_1} we obtain
\begin{lem}\label{lem:ad}
Let $K>(d-1)/2$ and $M>d-1$. Set $\bar{\omega}_{\xi}=\sum_{\zeta\in\cX}\omega_{\xi,\zeta}^{(K,M)}$ for $\xi\in\cX$.
Then $\bar{\omega}_{\xi}\le c$, where $c$ is a constant depending on $d$, $\lambda$, $b$, $K$, and $M$.
\end{lem}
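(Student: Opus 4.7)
The plan is to fix $\xi\in\cX_j$, set $N_\xi=b^j$, and split
\[
\bar\omega_\xi=\sum_{m=0}^\infty\sum_{\zeta\in\cX_m}\omega_{\xi,\zeta}^{(K,M)}
=\sum_{m\ge j}(\cdots)+\sum_{m<j}(\cdots),
\]
so that on each dyadic level the ratio factor in \eqref{eq:omega_xi_eta} reduces to $b^{-|j-m|(K+(d-1)/2)}$ and the localization factor to $(1+b^{\min\{j,m\}}\rho(\xi,\zeta))^{-M}$. The whole estimate will then come from combining these two factors with the inner-sum bound provided by \lemref{lem:Besov_1}.

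For the high-frequency tail $m=j+k$, $k\ge 0$, we have $\min\{N_\xi,N_\zeta\}=b^j$, so the inner sum is
\[
\sum_{\zeta\in\cX_{j+k}}\frac{1}{(1+b^j\rho(\xi,\zeta))^{M}}.
\]
Choosing $\kappa:=\min\{M-(d-1),1\}>0$ (which is positive by the hypothesis $M>d-1$) and using $(1+b^j\rho(\xi,\zeta))^{-M}\le(1+b^j\rho(\xi,\zeta))^{-(d-1+\kappa)}$, \lemref{lem:Besov_1} bounds this sum by $cb^{k(d-1)}$. Multiplying by $b^{-k(K+(d-1)/2)}$ and summing in $k$ gives the geometric series $c\sum_{k\ge 0}b^{-k(K-(d-1)/2)}$, which converges precisely because $K>(d-1)/2$.

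For the low-frequency head $m=j-k$, $1\le k\le j$, we have $\min\{N_\xi,N_\zeta\}=b^m$, so the inner sum becomes
\[
\sum_{\zeta\in\cX_m}\frac{1}{(1+b^m\rho(\xi,\zeta))^{M}},
\]
which by \lemref{lem:Besov_1} (applied at scale $b^m$ with the ``$m$'' of that lemma equal to $0$) is bounded by a constant $c$ uniformly in $m$ and $\xi$. The geometric decay factor $b^{-k(K+(d-1)/2)}$ then yields $\sum_{k\ge 1}b^{-k(K+(d-1)/2)}\le c$, which converges for any $K>-(d-1)/2$ and in particular under our hypothesis. Adding the two pieces gives a bound on $\bar\omega_\xi$ independent of $\xi$, as claimed.

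There is essentially no obstacle beyond being careful at the crossover $m=j$ and verifying that the exponent $K-(d-1)/2$ in the high-frequency geometric series is strictly positive; this is exactly what the hypothesis $K>(d-1)/2$ secures, and the condition $M>d-1$ is what allows us to invoke \lemref{lem:Besov_1} with an exponent of the form $d-1+\kappa$, $\kappa\in(0,1]$. The case of $\omega_{\xi,\zeta}^{(\infty,M)}$ is not needed here since $\omega_{\xi,\zeta}^{(\infty,M)}\le\omega_{\xi,\zeta}^{(K,M)}$, but if desired it follows directly from \lemref{lem:Besov_1} at level $j$ alone.
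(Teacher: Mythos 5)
Your proof is correct and follows the same approach as the paper: split the sum over $\zeta\in\cX_m$ into the levels $m\ge j$ and $m<j$, apply \lemref{lem:Besov_1} to each inner sum, and observe that the resulting geometric series converge under the hypotheses $K>(d-1)/2$ and $M>d-1$. Your explicit choice of $\kappa=\min\{M-(d-1),1\}$ makes the invocation of \lemref{lem:Besov_1} slightly more careful than the paper's, but the substance is identical.
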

\begin{proof}
Let $\xi\in\cX_j$, $j\in\NN_0$. Using \eqref{eq:omega_xi_eta} and \lemref{lem:Besov_1} we obtain
\begin{align*}
\bar{\omega}_{\xi}
&=\sum_{n=0}^{j-1}\sum_{\zeta\in\cX_n}\frac{b^{(n-j)(K+(d-1)/2)}}{(1+b^n\rho(\xi,\zeta))^M}
+\sum_{n=j}^\infty\sum_{\zeta\in\cX_n}\frac{b^{(j-n)(K+(d-1)/2)}}{(1+b^j\rho(\xi,\zeta))^M}\\
&\le c\sum_{n=0}^{j-1}b^{(n-j)(K+(d-1)/2)}
+c\sum_{n=j}^\infty b^{(j-n)(K+(d-1)/2)}b^{(n-j)(d-1)}
\le c,
\end{align*}
which completes the proof.
\end{proof}

In the following two statements we establish the fact that
the almost diagonal operators bounded element-wise by $\Omega^{(K,M)}$ are bounded on $\ff_\infty^{sq}$.
The proof of the boundedness for $q=\infty$ is simpler.

\begin{prop}\label{almost_diag_infty}
Let  $s\in\RR$, $K>|s|$ and $M>d-1$. If $h\in\ff_\infty^{s\infty}$ then
\begin{equation*}
\|\Omega^{(K,M)}h\|_{\ff_\infty^{s\infty}}\le c\|h\|_{\ff_\infty^{s\infty}}.
\end{equation*}
Moreover, if $h\in\sepTLb{\ff}{s}{\infty}$ then $\Omega^{(K,M)}h\in\sepTLb{\ff}{s}{\infty}$.
\end{prop}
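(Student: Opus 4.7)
The plan is to use the identity $|Q_\xi|\sim b^{-j(d-1)}$ for $\xi\in\cX_j$ (from \eqref{dyadic_eq:6}) to restate the norm as
\[
\|h\|_{\ff_\infty^{s\infty}} \sim \sup_{j\ge 0}\sup_{\xi\in\cX_j} b^{j(s+(d-1)/2)}|h_\xi|,
\]
and then estimate $|(\Omega^{(K,M)}h)_\xi|$ scale-by-scale. Writing $|(\Omega^{(K,M)}h)_\xi|\le\sum_{n\ge 0}\sum_{\eta\in\cX_n}\omega_{\xi,\eta}^{(K,M)}|h_\eta|$ and bounding $|h_\eta|\le\|h\|_{\ff_\infty^{s\infty}}b^{-n(s+(d-1)/2)}$, I would split the inner sum into the coarser regime $n\le j$ and the finer regime $n>j$.

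In the coarser regime the weight \eqref{eq:omega_xi_eta} reduces to $b^{(n-j)(K+(d-1)/2)}(1+b^n\rho(\xi,\eta))^{-M}$, and the $\eta$-sum is $O(1)$ by \lemref{lem:Besov_1} with $m=0$ (possible since $M>d-1$); combined with the $b^{(j-n)(s+(d-1)/2)}$ factor this leaves a geometric series $\sum_{n\le j} b^{(j-n)(s-K)}$, which is summable because $K>s$. In the finer regime the weight is $b^{(j-n)(K+(d-1)/2)}(1+b^j\rho(\xi,\eta))^{-M}$ and the $\eta$-sum is $O(b^{(n-j)(d-1)})$ by \lemref{lem:Besov_1} with $m=n-j$, yielding $\sum_{n>j} b^{-(n-j)(s+K)}$, summable because $K>-s$. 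Both bounds together give $b^{j(s+(d-1)/2)}|(\Omega^{(K,M)}h)_\xi|\le c\|h\|_{\ff_\infty^{s\infty}}$, uniformly in $\xi$.

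For the separable statement, given $h\in\sepTLb{\ff}{s}{\infty}$ and $\eps>0$, split $h=h'+h''$, where $h'$ is the truncation of $h$ to $\cup_{n<J_0}\cX_n$ and $h''$ vanishes there; by the separable condition, $J_0$ can be chosen so that $\|h''\|_{\ff_\infty^{s\infty}}<\eps$, hence the already-proved boundedness gives $b^{j(s+(d-1)/2)}|(\Omega^{(K,M)}h'')_\xi|\le c\eps$ uniformly in $j,\xi$. For the finitely supported $h'$ and $\xi\in\cX_j$ with $j\ge J_0$, only the coarser regime $n<J_0$ contributes, and a direct estimate yields $b^{j(s+(d-1)/2)}|(\Omega^{(K,M)}h')_\xi|\le C(h',J_0)b^{-j(K-s)}$, which tends to $0$ as $j\to\infty$ since $K>s$. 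Summing the two pieces establishes the limit required by Definition~\ref{def:VTL_infty} with $q=\infty$.

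I expect the only delicate point is bookkeeping the two regimes of $\omega_{\xi,\eta}^{(K,M)}$ and verifying that the hypotheses $K>|s|$ and $M>d-1$ are used exactly where needed: $M>d-1$ for the spatial sums via \lemref{lem:Besov_1}, and the two-sided bound $K>|s|$ to close both geometric series in scale. The argument for the separable tail is routine once boundedness is in hand, the only subtlety being that the coarse finitely-supported piece actually decays in $j$ (this is where $K>s$, rather than merely $K\ge s$, is essential).
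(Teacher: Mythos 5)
Your proof is correct and takes essentially the same route as the paper's: fix $\xi\in\cX_j$, split the $n$-sum at level $j$, use \lemref{lem:Besov_1} with $m=0$ on the coarse side and $m=n-j$ on the fine side, and close the geometric series in $j-n$ using $K>s$ and $K>-s$ respectively. The only cosmetic difference is in the separable part: you split $h=h'+h''$ into a finitely supported head and a small tail and invoke boundedness for the tail, whereas the paper keeps $h$ whole and separates the $n$-sum at a fixed cutoff $n_0$, using the vanishing condition for $n\ge n_0$ and the geometric decay (via a second threshold $n_1$) for $n<n_0$; these are the same estimate reorganized.
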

\begin{proof}
Recall that $\|h\|_{\ff_\infty^{s\infty}}=\sup_{\xi\in\cX}|Q_\xi|^{-s/(d-1)-1/2}|h_\xi|$. Fix $\xi\in\cX_j$, $j\in\NN_0$, and write
%\begin{align*}
%\|\Omega^{(K,M)}h\|_{\ff_\infty^{s\infty}}&=\sup_{\xi\in\cX}|Q_\xi|^{-s/(d-1)-1/2}\sum_{\eta\in\cX}\omega_{\xi,\eta}|h_\eta|
%\\
%&\le\|h\|_{\ff_\infty^{s\infty}}\sup_{\xi\in\cX}\sum_{\eta\in\cX}\omega_{\xi,\eta}\frac{|Q_\xi|^{-s/(d-1)-1/2}}{|Q_\eta|^{-s/(d-1)-1/2}}.
%\end{align*}
\begin{equation}\label{almost_diag_infty_1}
|Q_\xi|^{-s/(d-1)-1/2}\sum_{\eta\in\cX}\omega_{\xi,\eta}|h_\eta|
\le\|h\|_{\ff_\infty^{s\infty}}\sum_{n=0}^\infty\sum_{\eta\in\cX_n}\omega_{\xi,\eta}\frac{|Q_\xi|^{-s/(d-1)-1/2}}{|Q_\eta|^{-s/(d-1)-1/2}}.
\end{equation}
Then using \eqref{eq:omega_xi_eta} and \lemref{lem:Besov_1} we get
\begin{align}
\sum_{n=0}^\infty\sum_{\eta\in\cX_n}\omega_{\xi,\eta}&\frac{|Q_\xi|^{-s/(d-1)-1/2}}{|Q_\eta|^{-s/(d-1)-1/2}}\label{almost_diag_infty_2}\\
&\le c\sum_{n=0}^{j-1}\sum_{\eta\in\cX_n}\frac{b^{(n-j)(K+(d-1)/2)}}{(1+b^n\rho(\xi,\eta))^M}b^{(j-n)(s+(d-1)/2)}\nonumber\\
&+c\sum_{n=j}^\infty \sum_{\eta\in\cX_n}\frac{b^{(j-n)(K+(d-1)/2)}}{(1+b^j\rho(\xi,\eta))^M}b^{(j-n)(s+(d-1)/2)}\nonumber\\
&\le c\sum_{n=0}^{j-1}b^{(n-j)(K-s)}+c\sum_{n=j}^\infty b^{(j-n)(K+s+d-1)} b^{(n-j)(d-1)}\le c.\nonumber
\end{align}
Now \eqref{almost_diag_infty_1} and \eqref{almost_diag_infty_2} prove the proposition for every $h\in\ff_\infty^{s\infty}$.

In order to prove this proposition in the separable case pick $\eps>0$ and for $h\in\sepTLb{\ff}{s}{\infty}$ choose:

\noindent
(a) $n_0$ such that $|Q_\eta|^{-s/(d-1)-1/2}|h_\eta|\le \eps \|h\|_{\ff_\infty^{s\infty}}$ for every $\eta\in\cX_n$, $n\ge n_0$;

\noindent
(b) $n_1$ such that $b^{-n_1(K-s)}\le\eps(1-b^{-(K-s)})$.

Now for $\xi\in\cX_j$, $j\ge n_0+n_1$ we repeat the same proof with the modifications:
(i) We separate the sum on $n$ in \eqref{almost_diag_infty_1} to $n<n_0$ and $n\ge n_0$ and use (a) in the second sum.
(ii) We estimate the sum on $n<n_0$ in \eqref{almost_diag_infty_2} by $c\eps$ by applying (b) because $j-n_0\ge n_1$.
(iii) We estimate the sum on $n\ge n_0$ in \eqref{almost_diag_infty_2} by $c$ as in the non-separable case.
Thus, we show that for all $\xi\in\cX_j$, $j\ge n_0+n_1$,
\begin{equation*}
|Q_\xi|^{-s/(d-1)-1/2}\sum_{\eta\in\cX}\omega_{\xi,\eta}|h_\eta|
\le c\eps \|h\|_{\ff_\infty^{s\infty}},
\end{equation*}
which completes the proof.
\end{proof}

The proof of the boundedness for $q<\infty$ is more involved.

\begin{thm}\label{thm:almost_diag}
Let $s\in\RR$, $0<q<\infty$, $q_*=\min\{q,1\}$ and $K$, $M$ satisfy
\begin{equation}\label{almost_diag:2}
K>\max\Big\{sq-(d-1)\Big(\frac{q_*}{2}-\frac{q}{2}\Big),-sq-(d-1)\Big(\frac{q_*}{2}+\frac{q}{2}-1\Big),(d-1)\frac{q_*}{2}\Big\}\frac{1}{q_*},
\end{equation}
\begin{equation}\label{almost_diag:3}
M>\frac{d-1}{q_*}.
\end{equation}
Then
\begin{equation}\label{almost_diag:1}
\|\Omega^{(K,M)}h\|_{\ff_\infty^{sq}}\le c\|h\|_{\ff_\infty^{sq}}.
\end{equation}
Moreover, if $h\in\sepTL{\ff}{s}{q}$, then $\Omega^{(K,M)}h\in\sepTL{\ff}{s}{q}$.
\end{thm}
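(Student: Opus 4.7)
The plan is to adapt the Frazier-Jawerth argument for almost-diagonal matrices on $\dot f_\infty^{sq}(\RR^d)$ (cf.\ \cite[Theorem 5.2]{FJ2}) to the spherical nested structure $\{Q_\xi\}_{\xi\in\cX}$ from \thmref{nested_sets}. Fix $\xi_0\in\cX_{j_0}$; it suffices to bound
$$\mathcal{A}_{\xi_0}^q := \frac{1}{|Q_{\xi_0}|}\sum_{Q_\eta\subset Q_{\xi_0}}\Bigl[|Q_\eta|^{-s/(d-1)-1/2}\sum_{\zeta\in\cX}\omega_{\eta,\zeta}^{(K,M)}|h_\zeta|\Bigr]^q|Q_\eta|$$
by a constant multiple of $\|h\|_{\ff_\infty^{sq}}^q$, uniformly in $\xi_0$.

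Decompose $h=h^N+h^U+h^F$ relative to $\xi_0$: $h^N_\zeta:=h_\zeta$ when $N_\zeta\ge N_{\xi_0}$ and $Q_\zeta\subset Q_{\xi_0^\ast}$, where $\xi_0^\ast$ is a fixed ancestor of $\xi_0$ chosen so $Q_{\xi_0^\ast}$ is a controlled enlargement of $Q_{\xi_0}$; $h^U_\zeta:=h_\zeta$ when $N_\zeta<N_{\xi_0}$; and $h^F_\zeta:=h_\zeta$ otherwise. For $h^N$, applying the $\ff_\infty^{sq}$-norm at $\xi_0^\ast$ together with \eqref{dyadic_eq:6} yields
$$\sum_\zeta\bigl[|Q_\zeta|^{-s/(d-1)-1/2}|h^N_\zeta|\bigr]^q|Q_\zeta|\le|Q_{\xi_0^\ast}|\,\|h\|_{\ff_\infty^{sq}}^q\sim|Q_{\xi_0}|\,\|h\|_{\ff_\infty^{sq}}^q,$$
so $h^N\in\ff_q^{sq}$ with norm $\lesssim|Q_{\xi_0}|^{1/q}\|h\|_{\ff_\infty^{sq}}$. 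The standard almost-diagonal boundedness on $\ff_q^{sq}$ (a variant of the corresponding estimate for $\ff_p^{sq}$ with finite $p$, easily obtained under \eqref{almost_diag:2}-\eqref{almost_diag:3}, which are stronger than what is needed for finite $p$) then gives $\frac{1}{|Q_{\xi_0}|}\|\Omega^{(K,M)}h^N\|_{\ff_q^{sq}}^q\lesssim\|h\|_{\ff_\infty^{sq}}^q$, controlling the $h^N$-contribution to $\mathcal{A}_{\xi_0}^q$.

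For $h^U$ and $h^F$, I will use the pointwise bound $|h_\zeta|\le|Q_\zeta|^{s/(d-1)+1/2}\|h\|_{\ff_\infty^{sq}}$, obtained by isolating the $\eta=\zeta$ term in \eqref{def-f-space-infty2}, and absorb the factor $|Q_\zeta|^{s/(d-1)+1/2}|Q_\eta|^{-s/(d-1)-1/2}\sim(N_\eta/N_\zeta)^{s+(d-1)/2}$ into $\omega_{\eta,\zeta}^{(K,M)}$. The effective weight becomes $(N_\zeta/N_\eta)^{K-s}(1+N_\zeta\rho(\eta,\zeta))^{-M}$ in the $h^U$ regime ($N_\eta>N_\zeta$), where the scale factor alone supplies summability, and $(N_\eta/N_\zeta)^{K+s+d-1}(1+N_\eta\rho(\eta,\zeta))^{-M}$ in the $h^F$ regime ($N_\eta\le N_\zeta$), where the condition $Q_\zeta\not\subset Q_{\xi_0^\ast}$ forces $\rho(\eta,\zeta)\gtrsim b^{-j_0}$ so that the $\rho$-decay is active. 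The $q_\ast$-inequality (for $q\le1$) or H\"older's inequality (for $q>1$) moves the $q$-th power inside these sums; spatial sums over $\zeta$ are handled by \lemref{lem:Besov_1}, and geometric convergence in scale produces precisely the thresholds in \eqref{almost_diag:2}-\eqref{almost_diag:3}.

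The main obstacle I foresee is the careful bookkeeping in the near-piece step—transferring the $\ff_q^{sq}$-bound on $\Omega^{(K,M)}h^N$ to a bound on the single-cube average $\mathcal{A}_{\xi_0}$ without double-counting across cubes—together with pinning down the exponents in \eqref{almost_diag:2} from the case analysis of $h^U$ and $h^F$ under both $q\le1$ and $q>1$. For the separable assertion, given $h\in\sepTL{\ff}{s}{q}$ and $\varepsilon>0$, split $h=h_{[0,J]}+h_{(J,\infty)}$ by scale: Definition~\ref{def:VTL_infty} allows $J$ with $\|h_{(J,\infty)}\|_{\ff_\infty^{sq}}<\varepsilon$, so $\|\Omega^{(K,M)}h_{(J,\infty)}\|_{\ff_\infty^{sq}}\lesssim\varepsilon$ by the just-proved boundedness; for the finitely-supported head $h_{[0,J]}$, an argument analogous to \propref{almost_diag_infty} uses the off-diagonal decay of $\omega_{\eta,\zeta}^{(K,M)}$ to place $\Omega^{(K,M)}h_{[0,J]}$ into $\sepTL{\ff}{s}{q}$. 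Hence $\Omega^{(K,M)}h\in\sepTL{\ff}{s}{q}$.
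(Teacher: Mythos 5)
Your proposal takes a genuinely different route from the paper's. You follow the Frazier--Jawerth template for $\dot f^{sq}_\infty(\RR^d)$, fixing a top cube $Q_{\xi_0}$ and decomposing $h$ into a ``near'' piece (fine scale, inside an enlargement $Q_{\xi_0^*}$), a ``coarse'' piece ($N_\zeta<N_{\xi_0}$), and a ``far'' piece (fine scale, outside $Q_{\xi_0^*}$), handling the near piece by localizing to $\ff_q^{sq}$ and invoking finite-$p$ almost-diagonal boundedness, and the other two by the coarse pointwise estimate $|h_\zeta|\le |Q_\zeta|^{s/(d-1)+1/2}\|h\|_{\ff_\infty^{sq}}$. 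The paper does \emph{not} decompose $h$: it applies H\"older or the $q_*$-inequality once, pushes the $q$-th power onto the matrix entries, and then bounds the resulting kernel quantity $G(\xi,\zeta)$ directly, splitting only on whether $\zeta$ lives at a level finer or coarser than $\xi$; the spatial far/near dichotomy is absorbed into the single decay bound $G(\xi,\zeta)\lesssim (1+b^j\rho(y,\xi))^{-Mq_*}$ (with $y$ the unique level-$j$ ancestor of $\zeta$), which is then fed into \lemref{lem:Besov_1}. The paper's route is self-contained and produces the exponent thresholds in \eqref{almost_diag:2}--\eqref{almost_diag:3} as an unavoidable by-product of a single chain of estimates; yours is modular and more recognizable to a reader who knows \cite[Theorem 5.2]{FJ2}, but at the cost of outsourcing the near piece to a finite-$p$ almost-diagonal theorem (not stated in this paper and not an exact quote from \cite{IP2}) and of deferring the exponent bookkeeping that would confirm the precise constant in \eqref{almost_diag:2}. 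Two small cautions: your description of the far regime covers only $N_\eta\le N_\zeta$, while $N_\eta>N_\zeta$ can also occur there (handled identically since $\rho(\eta,\zeta)\gtrsim b^{-j_0}$ persists); and for the separable assertion your split $h=h_{[0,J]}+h_{(J,\infty)}$ by scale is a clean alternative to the paper's in-place modification of the main estimate, provided you verify that $\Omega^{(K,M)}h_{[0,J]}$ has the required tail decay (the factor $(N_\zeta/N_\eta)^{K+(d-1)/2}$ with $N_\zeta\le b^J$ and $K>(d-1)/2$ does give this).
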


\begin{proof}
We shall use the abbreviated notation $\Omega:=\Omega^{(K,M)}$, $\omega_{\xi,\eta}:=\omega_{\xi,\eta}^{(K,M)}$.
For $1\le q<\infty$ using H\"{o}lder's inequality and \lemref{lem:ad} we obtain
\begin{equation}\label{ad:eq01}
|(\Omega h)_{\eta}|^q\le\big[\sum_{\zeta\in\cX}\omega_{\eta,\zeta}|h_{\zeta}|\big]^q
\le \bar{\omega}_\eta^{q-1}\sum_{\zeta\in\cX}\omega_{\eta,\zeta}|h_{\zeta}|^q
\le c^{q-1}\sum_{\zeta\in\cX}\omega_{\eta,\zeta}|h_{\zeta}|^q.
\end{equation}
For $0< q<1$ using the $q$-inequality we have
\begin{equation}\label{ad:eq02}
|(\Omega h)_{\eta}|^q\le\big[\sum_{\zeta\in\cX}\omega_{\eta,\zeta}|h_{\zeta}|\big]^q
\le \sum_{\zeta\in\cX}\omega_{\eta,\zeta}^q|h_{\zeta}|^q
\end{equation}
In order to evaluate for $\Omega h$ the expression in the right-hand side of \eqref{def-f-space-infty5} we fix $\xi\in\cX$. Assume $\xi\in\cX_j$.
We shall establish the estimate
\begin{align}\label{ad:eq13}
S(\xi)&:=b^{j(d-1)}\sum_{k=0}^\infty\sum_{\substack{\eta\prec\xi\\ \eta\in\cX_{j+k}}}
\big[b^{(j+k)(s+(d-1)/2)}|(\Omega h)_{\eta}|\big]^q b^{-(j+k)(d-1)}\\
&\le c\sum_{y\in\cX_j}\frac{b^{j(d-1)}}{(1+b^j\rho(\xi,y))^{Mq_*}} \sum_{n=j}^\infty\sum_{\substack{\zeta\in\cX_n\\ \zeta\prec y}}
b^{n(sq+(d-1)(q/2-1))}|h_{\zeta}|^q\nonumber \\
&+c\sum_{n=0}^{j-1}b^{(n-j)L}\sum_{\zeta\in\cX_n} \frac{b^{j(d-1)}}{(1+b^n\rho(\xi,\zeta))^{Mq_*}}
b^{n(sq+(d-1)(q/2-1))}|h_\zeta|^q\nonumber
\end{align}
with $L=Kq_*-sq+(d-1)(q_*/2-q/2+1)>d-1$ (cf. \eqref{almost_diag:2}).
If $j=0$ we assume that the second summand in the right-hand side of \eqref{ad:eq13} is zero.

Using \eqref{ad:eq01}--\eqref{ad:eq02} we get
\begin{align}\label{ad:eq03}
S(\xi)&=b^{j(d-1)}\sum_{k=0}^\infty\sum_{\substack{\eta\prec\xi\\ \eta\in\cX_{j+k}}}
\big[b^{(j+k)(s+(d-1)/2)}|(\Omega h)_{\eta}|\big]^q b^{-(j+k)(d-1)}\\
&\le c b^{j(d-1)}\sum_{k=0}^\infty\sum_{\substack{\eta\prec\xi\\ \eta\in\cX_{j+k}}}
\big[b^{(j+k)(s+(d-1)/2)q}\sum_{\zeta\in\cX}\omega_{\eta,\zeta}^{q_*}|h_{\zeta}|^q\big] b^{-(j+k)(d-1)} \nonumber \\
&=cb^{j(d-1)}\sum_{n=0}^\infty\sum_{\zeta\in\cX_n} G(\xi,\zeta) b^{n(sq+(d-1)(q/2-1))}|h_{\zeta}|^q\nonumber
\end{align}
with
\begin{equation}\label{ad:eq05}
G(\xi,\zeta):=\sum_{k=0}^\infty\sum_{\substack{\eta\prec\xi\\ \eta\in\cX_{j+k}}}\omega_{\eta,\zeta}^{q_*}b^{(j+k-n)(sq+(d-1)(q/2-1))}.
\end{equation}
In evaluating $G(\xi,\zeta)$ we consider two cases depending on whether the index $\zeta$ belongs to higher or lower level than the level of $\xi$.

Case 1: $n\ge j$, $\zeta\in\cX_n$. According to \thmref{nested_sets} there exists a unique $y\in \cX_j$ such that $\zeta\prec y$.
In this case we shall establish the estimate
\begin{equation}\label{ad:eq06}
G(\xi,\zeta)\le c (1+b^j\rho(y,\xi))^{-Mq_*},\quad \zeta\prec y,~y\in \cX_j.
\end{equation}

Set $\delta:=\gamma b^{-j}$. If  $\rho(\xi,y)< 3\delta$ using \lemref{lem:Besov_1} and \eqref{almost_diag:2}--\eqref{almost_diag:3}  we obtain
\begin{align*}
G(\xi,\zeta)
&=\sum_{k=0}^{n-j-1}\sum_{\substack{\eta\prec\xi\\ \eta\in\cX_{j+k}}}\frac{b^{(k+j-n)(Kq_*+sq+(d-1)(q_*/2+q/2-1))}}{(1+b^{k+j}\rho(\eta,\zeta))^{Mq_*}}\\
&+\sum_{k=n-j}^\infty\sum_{\substack{\eta\prec\xi\\ \eta\in\cX_{j+k}}}\frac{b^{(n-j-k)(Kq_*-sq+(d-1)(q_*/2-q/2+1))}}{(1+b^n\rho(\eta,\zeta))^{Mq_*}}\\
&\le \sum_{k=0}^{n-j-1} \sum_{\eta\in\cX_{j+k}}\frac{b^{(k+j-n)(Kq_*+sq+(d-1)(q_*/2+q/2-1))}}{(1+b^{k+j}\rho(\eta,\zeta))^{Mq_*}}\\
&+ \sum_{k=n-j}^\infty \sum_{\eta\in\cX_{j+k}}\frac{b^{(n-j-k)(Kq_*-sq+(d-1)(q_*/2-q/2+1))}}{(1+b^n\rho(\eta,\zeta))^{Mq_*}}\\
&\le c\sum_{k=0}^{n-j-1} b^{(k+j-n)(Kq_*+sq+(d-1)(q_*/2+q/2-1))}\\
&+c\sum_{k=n-j}^\infty b^{(n-j-k)(Kq_*-sq+(d-1)(q_*/2-q/2+1))} b^{(j+k-n)(d-1)}\\
&\le c\le \frac{c}{(1+3\gamma)^{Mq_*}}\frac{1}{(1+b^j\rho(y,\xi))^{Mq_*}}.
\end{align*}
This proves \eqref{ad:eq06} whenever $\rho(\xi,y)< 3\delta$.

Let $\rho(\xi,y)\ge 3\delta$.
From \thmref{nested_sets} we have $\rho(\zeta,y)<\frac{b}{b-1}\delta$ for $\zeta\prec y$ and $\rho(\eta,\xi)<\frac{b}{b-1}\delta$ for $\eta\prec\xi$. Hence
\begin{equation*}
\rho(\zeta,\eta)\ge\rho(\xi,y)-\rho(\zeta,y)-\rho(\eta,\xi)\ge\rho(\xi,y)-\frac{2b}{b-1}\delta\ge\frac{b-3}{3b-3}\rho(\xi,y).
\end{equation*}
Using this inequality, the estimate from \eqref{number_of_children}, and \eqref{almost_diag:2}--\eqref{almost_diag:3} we obtain
\begin{align*}
G(\xi,\zeta)
&=\sum_{k=0}^{n-j-1}\sum_{\substack{\eta\prec\xi\\ \eta\in\cX_{j+k}}}\frac{b^{(k+j-n)(Kq_*+sq+(d-1)(q_*/2+q/2-1))}}{(1+b^{k+j}\rho(\eta,\zeta))^{Mq_*}}\\
&+\sum_{k=n-j}^\infty\sum_{\substack{\eta\prec\xi\\ \eta\in\cX_{j+k}}}\frac{b^{(n-j-k)(Kq_*-sq+(d-1)(q_*/2-q/2+1))}}{(1+b^n\rho(\eta,\zeta))^{Mq_*}}\\
&\le c \sum_{k=0}^{n-j-1} b^{k(d-1)}\frac{b^{(k+j-n)(Kq_*+sq+(d-1)(q_*/2+q/2-1))}}{(b^{k+j}\rho(y,\xi))^{Mq_*}}\\
&+c \sum_{k=n-j}^\infty  b^{k(d-1)}\frac{b^{(n-j-k)(Kq_*-sq+(d-1)(q_*/2-q/2+1))}}{(b^n\rho(y,\xi))^{Mq_*}}\\
&\le \frac{c}{(b^j\rho(y,\xi))^{Mq_*}}\sum_{k=0}^{n-j-1}b^{(k+j-n)(Kq_*+sq+(d-1)(q_*/2+q/2-1))}\\
&+\frac{c}{(b^j\rho(y,\xi))^{Mq_*}}\sum_{k=n-j}^\infty b^{(n-j-k)(Kq_*-sq+(d-1)(q_*/2-q/2))}\\
&\le \frac{c}{(1+b^j\rho(y,\xi))^{Mq_*}}
\end{align*}
with $b^j\rho(y,\xi)\ge 3\gamma$ used in the last inequality. The above completes the proof of \eqref{ad:eq06}.

Case 2: $n<j$, $\zeta\in\cX_n$.
For every $\zeta\in\cX_n$ we use the maximality of the net $\cX_n$ and \eqref{dyadic_eq:4} to obtain
\begin{equation*}
(1+b^n\rho(\eta,\zeta))^{-1} \le c(1+b^n\rho(\xi,\zeta))^{-1}\quad \forall\eta\prec\xi.
\end{equation*}
Using the above inequality, the estimate from \eqref{number_of_children}  and \eqref{almost_diag:2}--\eqref{almost_diag:3} we obtain
\begin{align}\label{ad:eq08}
G(\xi,\zeta)&=\sum_{k=0}^\infty\sum_{\substack{\eta\prec\xi\\ \eta\in\cX_{j+k}}}\frac{b^{(n-j-k)(Kq_*-sq+(d-1)(q_*/2-q/2+1))}}{(1+b^n\rho(\eta,\zeta))^{Mq_*}}\\
&\le c\sum_{k=0}^\infty \frac{b^{(n-j-k)(Kq_*-sq+(d-1)(q_*/2-q/2+1))}}{(1+b^n\rho(\xi,\zeta))^{Mq_*}}b^{k(d-1)}\nonumber\\
&=c\frac{b^{(n-j)(Kq_*-sq+(d-1)(q_*/2-q/2+1))}}{(1+b^n\rho(\xi,\zeta))^{Mq_*}}.\nonumber
\end{align}
Inserting \eqref{ad:eq06} and \eqref{ad:eq08} in \eqref{ad:eq03}--\eqref{ad:eq05} we obtain \eqref{ad:eq13}.

Using \eqref{ad:eq13}, \lemref{lem:Besov_1}, \eqref{def-f-space-infty5}, and its trivial consequence
\begin{equation*}
b^{n(s+(d-1)/2)}|h_\zeta|\le C_5\|h\|_{\ff_\infty^{sq}},\quad \zeta\in\cX_n,
\end{equation*}
for $n<j$ we obtain
\begin{align}\label{ad:eq11}
S(\xi)&\le c\sum_{y\in\cX_j}\frac{1}{(1+b^j\rho(\xi,y))^{Mq_*}}\|h\|_{\ff_\infty^{sq}}^q\\
&+c\sum_{n=0}^{j-1}b^{(n-j)(L-d+1)}\sum_{\zeta\in\cX_n} \frac{1}{(1+b^n\rho(\xi,\zeta))^{Mq_*}}\|h\|_{\ff_\infty^{sq}}^q \nonumber \\
&\le c\left(1+\frac{b^{-L+d-1}}{1-b^{-L+d-1}}\right) \|h\|_{\ff_\infty^{sq}}^q. \nonumber
\end{align}
Now, inequality \eqref{almost_diag:1} follows from \eqref{ad:eq11} and \eqref{def-f-space-infty5}.

Finally, assume that $h\in\sepTL{\ff}{s}{q}$ and fix $\eps>0$. Let $N_1$ be such that
\begin{equation*}
\sum_{Q_\eta\subset Q_\zeta}
\big[|Q_\eta|^{-s/(d-1)-1/2}|h_{\eta}|\big]^q \frac{|Q_\eta|}{|Q_\zeta|}\le\eps^q,\quad \zeta\in\cX_n,~n\ge N_1.
\end{equation*}
Also, let $N_2$ be such that
\begin{equation*}
b^{N_2(-L+d-1)}\|h\|_{\ff_\infty^{sq}}^q\le\eps^q.
\end{equation*}
Using again \eqref{ad:eq13}, \lemref{lem:Besov_1}, and \eqref{def-f-space-infty5} we obtain for all $\xi\in\cX_j$, $j\ge N_1+N_2$,
\begin{align*}
S(\xi)&\le c\sum_{y\in\cX_j}\frac{1}{(1+b^j\rho(\xi,y))^{Mq_*}}\eps^q\\
&+c\sum_{n=N_1}^{j-1}b^{(n-j)(L-d+1)}\sum_{\zeta\in\cX_n} \frac{1}{(1+b^n\rho(\xi,\zeta))^{Mq_*}}\eps^q\\
&+c\sum_{n=0}^{N_1-1}b^{(n-j)(L-d+1)}\sum_{\zeta\in\cX_n} \frac{1}{(1+b^n\rho(\xi,\zeta))^{Mq_*}}\|h\|_{\ff_\infty^{sq}}^q\\
&\le c\left(1+2\frac{b^{-L+d-1}}{1-b^{-L+d-1}}\right) \eps^q=c\eps^q.
\end{align*}
Hence $\Omega^{(K,M)}h\in\sepTL{\ff}{s}{q}$.
This completes the proof.
\end{proof}

The next lemma follows readily from \cite[Lemma 4.7]{NPW2} and \eqref{dyadic_eq:4} of \thmref{nested_sets}.
Note that \lemref{lem:sup_inf} will coincide with \cite[Lemma 4.7]{NPW2} if we replace $Q_\xi$ with $B(\xi,\gamma b^{-j})$
and $\zeta\prec\xi$ with the nonempty intersection of the respective balls.

\begin{lem}\label{lem:sup_inf}
Let $M\ge d$.
There exists $\kappa\in\NN$ depending only on $d$ 
and a constant $c>0$ depending on $d$ and $M$ such that for all $j\in\NN_0$ and $g\in\Pi_{b^j}(\SS)$ we have
\begin{equation*}
(\Omega^{(\infty,M)}h(g))_\eta\le (\Omega^{(\infty,M)}H(g))_\eta\le c(\Omega^{(\infty,M)}h(g))_\eta,\quad \forall \eta\in\cX,
\end{equation*}
where 
$H(g)_\xi:=\max_{x\in Q_\xi}|g(x)|$
and
\begin{equation*}
h(g)_\xi :=\max\{\min_{x\in Q_\zeta}|g(x)| : \zeta\in\cX_{i+\kappa}, \zeta\prec \xi\}, \quad \xi\in\cX_i, \; i\in\NN_0.
\end{equation*}
\end{lem}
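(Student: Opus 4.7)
The first inequality is trivial and purely set-theoretic. Since all entries of $\Omega^{(\infty,M)}$ are nonnegative, it suffices to establish $h(g)_\xi\le H(g)_\xi$ pointwise. For $\xi\in\cX_i$ and any $\zeta\in\cX_{i+\kappa}$ with $\zeta\prec\xi$, property \eqref{dyadic_eq:2} gives $Q_\zeta\subset Q_\xi$, hence $\min_{x\in Q_\zeta}|g(x)|\le\max_{x\in Q_\xi}|g(x)|=H(g)_\xi$. Taking the max over admissible $\zeta$ yields $h(g)_\xi\le H(g)_\xi$, and this passes through $\Omega^{(\infty,M)}$.

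The second inequality is the substance of the lemma, and I plan to obtain it by transferring Lemma~4.7 of \cite{NPW2} from the setting of balls to the nested sets $Q_\xi$ via the sandwich relation \eqref{dyadic_eq:4}. Fix $\eta\in\cX_i$. By the indicator structure of $\omega^{(\infty,M)}$, only $\xi\in\cX_i$ contribute, and the task is to show
\[
\sum_{\xi\in\cX_i}\frac{H(g)_\xi}{(1+b^i\rho(\xi,\eta))^M}
\le c\sum_{\xi\in\cX_i}\frac{h(g)_\xi}{(1+b^i\rho(\xi,\eta))^M}.
\]
From \eqref{dyadic_eq:4} one has $Q_\xi\subset B(\xi,R_1 b^{-i})$ with $R_1:=b\bar\gamma/(2(b-1))$, and $B(\zeta,R_2 b^{-i-\kappa})\subset Q_\zeta$ with $R_2:=\betaw\bar\gamma/2$. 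Consequently
\[
H(g)_\xi\le\max_{x\in B(\xi,R_1 b^{-i})}|g(x)|,\qquad
h(g)_\xi\ge\max\bigl\{\min_{x\in B(\zeta,R_2 b^{-i-\kappa})}|g(x)|:\zeta\in\cX_{i+\kappa},~\zeta\prec\xi\bigr\}.
\]
Lemma~4.7 of \cite{NPW2}, applied to the polynomial $g\in\Pi_{b^j}$ and the maximal $\delta_i$-net $\cX_i$, furnishes an integer $\kappa$ depending only on $d$ (with the ambient radii $R_1,R_2$ fixed once and for all by $b,\bar\gamma,\betaw$) and a constant $c=c(d,M)$ such that the $\max$-sum on the left is bounded by the corresponding sum with $H(g)_\xi$ replaced by the max over intersecting sub-balls of the $\min$ of $|g|$. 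The ``intersecting sub-ball'' condition of \cite{NPW2} is then upgraded to the cleaner nesting condition $\zeta\prec\xi$ using \eqref{dyadic_eq:4}, at the cost of a finite combinatorial factor bounded by $\cN_\kappa$ from \eqref{number_of_children}, which is absorbed into $c$.

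The only real subtlety is ensuring that $\kappa$ can indeed be chosen uniformly. In Lemma~4.7 of \cite{NPW2}, $\kappa$ depends only on the dimension and the fixed ambient constants; applying it at each level $i$ separately, with the fixed radii $R_1$ and $R_2$ above, produces a universal $\kappa$ independent of $i$ and $j$. The degenerate case $i=0$, where $\cX_0=\{e_1\}$ and $Q_{e_1}=\SS$, reduces to a single-term sum and is handled by the same application of Lemma~4.7 at level $0$. The hardest bookkeeping step is the combinatorial translation between the ``intersecting ball'' neighbourhood relation used in \cite{NPW2} and the partial-order nesting $\prec$ used here --- a translation made routine by the uniform comparability of $Q_\xi$ with the concentric balls of \eqref{dyadic_eq:4}.
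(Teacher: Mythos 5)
Your overall plan coincides with the paper's (reduce to Lemma~4.7 of \cite{NPW2} via the sandwich \eqref{dyadic_eq:4}), and your treatment of the first inequality and the reduction to a same-level sum are correct. However, the displayed lower bound for $h(g)_\xi$ is reversed. You write
\[
h(g)_\xi\ \ge\ \max\{\min_{x\in B(\zeta,R_2 b^{-i-\kappa})}|g(x)| : \zeta\in\cX_{i+\kappa},\ \zeta\prec\xi\}
\]
using the inner inclusion $B(\zeta,R_2 b^{-i-\kappa})\subset Q_\zeta$, but a minimum over a \emph{smaller} set is \emph{larger}: $\min_{B(\zeta,R_2 b^{-i-\kappa})}|g|\ge\min_{Q_\zeta}|g|$, so maximizing over $\zeta$ gives an \emph{upper} bound for $h(g)_\xi$, not a lower one. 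The correct lower bound comes from the outer inclusion in \eqref{dyadic_eq:4}: $Q_\zeta\subset B(\zeta,R_1 b^{-i-\kappa})$ gives $\min_{Q_\zeta}|g|\ge\min_{B(\zeta,R_1 b^{-i-\kappa})}|g|$, hence $h(g)_\xi\ge\max\{\min_{B(\zeta,R_1 b^{-i-\kappa})}|g| : \zeta\prec\xi\}$. With that sign fixed you then need to invoke the NPW2 lemma with min-balls of radius $R_1 b^{-i-\kappa}$ rather than $R_2 b^{-i-\kappa}$, which is fine because the $\kappa$ in that lemma can be taken as large as necessary.

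A secondary point: the combinatorial factor absorbed when translating ``intersecting balls'' into ``$\zeta\prec\xi$'' is not $\cN_\kappa$ from \eqref{number_of_children}, which counts descendants of a single node at level $i+\kappa$. What you actually need is: if $B(\zeta,R_1 b^{-i-\kappa})$ meets $B(\xi,R_1 b^{-i})$, then the unique ancestor $\xi'\in\cX_i$ of $\zeta$ (from \eqref{dyadic_eq:3}) satisfies $\rho(\xi,\xi')\le c\,b^{-i}$, and $\min_{B(\zeta,R_1 b^{-i-\kappa})}|g|\le\min_{Q_\zeta}|g|\le h(g)_{\xi'}$. One then replaces the weight $(1+b^i\rho(\xi,\eta))^{-M}$ by the comparable $(1+b^i\rho(\xi',\eta))^{-M}$ using \eqref{comp_local}, and the relevant combinatorial bound is the bounded number of $\xi'\in\cX_i$ within distance $c\,b^{-i}$ of a given $\xi$, which follows from the net separation and \eqref{sph_cap2}, not from $\cN_\kappa$.
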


\subsection{Frames}

The needlet system $\Psi=\{\psi_\xi : \xi\in\cX\}$ introduced in Subsection~\ref{subsec:frame-SS}
is used as a backbone for the construction of the frame in terms of shifts of the Newtonian kernel from \cite{IP2}.
In the next theorem we show that $\Psi$ is a self-dual frame
for $\sepTL{\cF}{s}{q}$ the same way it is a self-dual frame for $\cB_p^{s q}$ and $\cF_p^{s q}$ with $p<\infty$.

Recall the {\em analysis} and {\em synthesis} operators, introduced in \eqref{def-oper-S-T}:
\begin{equation*}%\label{def-oper-S-T-2}
S_{\psi}: f\mapsto \{\langle f, \psi_\xi\rangle\}_{\xi\in\cX},
\quad
T_\psi: \{h_\xi\}_{\xi\in\cX} \mapsto \sum_{\xi\in\cX}h_\xi\psi_\xi.
\end{equation*}

\begin{thm}\label{F-norm-infty-equiv}
Let $s\in \RR$ and $0< q\le \infty$.
The operators
$S_{\psi}: \sepTL{\cF}{s}{q} \to \sepTL{\ff}{s}{q}$ and
$T_\psi: \sepTL{\ff}{s}{q} \to \sepTL{\cF}{s}{q}$
are bounded, and
$T_\psi\circ S_{\psi}= I$ on $\sepTL{\cF}{s}{q}$.
Hence,
if $f\in \cS'$, then $f\in \sepTL{\cF}{s}{q}$ if and only if
$\{\langle f,\psi_\xi\rangle\}_{\xi \in \cX}\in \sepTL{\ff}{s}{q}$, and
\begin{equation}\label{F-infty-calderon}
f =\sum_{\xi\in \cX}\langle f, \psi_\xi\rangle \psi_\xi
\quad \mbox{and}\quad
\|f\|_{\cF_\infty^{sq}}
\sim  \|\{\langle f,\psi_\xi\rangle\}\|_{\ff_\infty^{sq}}.
\end{equation}
The convergence in \eqref{F-infty-calderon} is unconditional in the $\cF_\infty^{sq}$-norm
and hence in $\cS'$.
All norm bounds above are uniform for $(s, q)\in\QQ_\infty(A)$, $A>1$,
with $\QQ_\infty(A)$ from \eqref{indices-infty}. 
\end{thm}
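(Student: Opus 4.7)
The plan is to mirror the proof of \thmref{thm:F-Bnorm-equiv}(b) from the finite-$p$ case, using the cube-based characterization in \propref{def:equiv_F-infty} in place of the $L^p$-integral and invoking \thmref{thm:almost_diag} (or \propref{almost_diag_infty} when $q=\infty$) in place of the classical almost-diagonal lemma. The base observation is that, since $\psi_\xi$ with $\xi\in\cX_j$ has spectrum in $[b^{j-2},b^j]$ and $\Phi_n$ has spectrum in $[b^{n-2},b^n]$, one has $\Phi_n*\psi_\xi\equiv 0$ whenever $|n-j|\ge 3$; for $|n-j|\le 2$, \thmref{thm:localization} applied to $\Phi_n*\Lambda_j$ together with \eqref{cubature_w} and \eqref{local-needlet-0} yields
\begin{equation*}
|\Phi_n*\psi_\xi(x)|\le c\,b^{j(d-1)/2}\bigl(1+b^j\rho(x,\xi)\bigr)^{-L},
\end{equation*}
with $L$ as large as desired, which is a rescaled entry of the matrix $\Omega^{(\infty,L)}$ from \eqref{omega_infty_xi_eta}.

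First I would prove boundedness of $T_\psi$ between the non-separable pair. For $h\in\ff_\infty^{sq}$ put $f:=T_\psi h=\sum_{\xi}h_\xi\psi_\xi$, where $\cS'$-convergence is standard from the localization. Expanding
\begin{equation*}
\Phi_n*f(x)=\sum_{|m-n|\le 2}\,\sum_{\xi\in\cX_m}h_\xi\,\Phi_n*\psi_\xi(x)
\end{equation*}
and substituting the pointwise bound, I fix $\zeta\in\cX_j$ and decompose $Q_\zeta=\bigsqcup_{\omega\prec\zeta,\,\omega\in\cX_n}Q_\omega$ for $n\ge j$. Replacing $|\Phi_n*f(x)|^q$ on $Q_\omega$ by its node-supremum and invoking \lemref{lem:sup_inf} to exchange the sup for an inf at a descendant of $\omega$ reduces the right-hand side of \propref{def:equiv_F-infty} to a scalar multiple of $\|\Omega^{(\infty,L)}|h|\|_{\ff_\infty^{sq}}$ via the cube-level identity \eqref{def-f-space-infty5}; \thmref{thm:almost_diag}/\propref{almost_diag_infty} then close the estimate. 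Boundedness of $S_\psi$ in the opposite direction is essentially dual: for $\xi\in\cX_j$ write $\langle f,\psi_\xi\rangle=\sum_{|n-j|\le 2}\langle\Phi_n*\widetilde\Phi_n*f,\psi_\xi\rangle$ with a Calder\'on partner $\widetilde\Phi_n$ of $\Phi_n$, bound each pairing by the almost-diagonal kernel integrated against $|\Phi_n*f|$, and translate pointwise samples into integral averages through \lemref{lem:sup_inf} and \propref{def:equiv_F-infty}.

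The identity $T_\psi\circ S_\psi=I$ is already available in $\cS'$ from \eqref{discretize}, so it only remains to promote it to unconditional $\cF_\infty^{sq}$-norm convergence on the separable subspace and to verify that both operators preserve separability. For the latter, the separable conclusions of \thmref{thm:almost_diag} and \propref{almost_diag_infty} imply that $S_\psi$ sends $\sepTL{\cF}{s}{q}$ into $\sepTL{\ff}{s}{q}$, and, combined with \propref{prop:sep-TL-infty}(b) and the fact that $T_\psi$ of a finitely supported sequence lies in $C^\infty(\SS)\subset\sepTL{\cF}{s}{q}$, that $T_\psi$ does the reverse. Unconditional norm convergence of $f=\sum_\xi\langle f,\psi_\xi\rangle\psi_\xi$ for $f\in\sepTL{\cF}{s}{q}$ is then standard: the tail over any cofinite $F\subset\cX$ equals $T_\psi(S_\psi f\cdot\ONE_{F^c})$, whose $\cF_\infty^{sq}$-norm is controlled by $\|S_\psi f\cdot\ONE_{F^c}\|_{\ff_\infty^{sq}}$, and this last quantity tends to $0$ as $F$ exhausts $\cX$ by density of finite sequences in $\sepTL{\ff}{s}{q}$. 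The hard part will be the bookkeeping in the $T_\psi$ estimate: transferring the pointwise bound on $\Phi_n*\psi_\xi$ into an integral-average estimate on $Q_\zeta$ that faithfully reproduces the nested-set norm of \propref{def:equiv_F-infty} and matches the inner suprema on both sides via \lemref{lem:sup_inf} and \eqref{def-f-space-infty5}, all while keeping the two-level shift from the condition $|m-n|\le 2$ harmless.
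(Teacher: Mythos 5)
Your proposal is correct and follows the same overall strategy as the paper: reduce both directions to the almost-diagonal boundedness of Theorem~\ref{thm:almost_diag} (resp.\ Proposition~\ref{almost_diag_infty} for $q=\infty$), using the cube-level norm of Proposition~\ref{def:equiv_F-infty}, then handle convergence and separability by density of finite sequences. Two points of divergence are worth noting. For $S_\psi$, you propose a Calder\'on-type decomposition $\langle f,\psi_\xi\rangle=\sum_{|n-j|\le 2}\langle\widetilde\Phi_n*\Phi_n*f,\psi_\xi\rangle$ and then convert to integral averages; the paper instead exploits the exact sampling identity $\langle f,\psi_\xi\rangle=\widetilde w_\xi^{1/2}\,\Psi_j*f(\xi)$ (taking $\Phi=\Psi$, which is legitimate by Proposition~\ref{prop:independ}), so no reproducing partner $\widetilde\Phi_n$ is needed: one goes straight from the sample $\Psi_j*f(\xi)$ to the auxiliary max/min sequence, bounds that sequence in $\ff_\infty^{sq}$ via \eqref{thm:equiv03}--\eqref{thm:equiv06}, and then applies Lemma~\ref{lem:sup_inf} plus Theorem~\ref{thm:almost_diag}. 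Your route works but is longer. For $T_\psi$, you invoke Lemma~\ref{lem:sup_inf} to pass from the pointwise bound to the cell-averaged quantity; this is unnecessary, since after substituting the localization estimate one only needs the elementary comparison $(1+N_\zeta\rho(x,\eta))^{-M}\le c\,(1+N_\zeta\rho(\zeta,\eta))^{-M}$ for $x\in Q_\zeta$, which already brings everything into the form of $\|\Omega^{(\infty,M)}|h|\|_{\ff_\infty^{sq}}$ via \eqref{def-f-space-infty5}. Lemma~\ref{lem:sup_inf} is genuinely needed only in the $S_\psi$ direction. Finally, a small slip: you write the tail as $T_\psi(S_\psi f\cdot\ONE_{F^c})$ for a cofinite $F$, but the tail is $T_\psi(S_\psi f\cdot\ONE_F)$ and its norm tends to zero as $F^c$ (finite) exhausts $\cX$.
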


\begin{proof}
First, let $q<\infty$.
To prove that $T_\psi: \sepTL{\ff}{s}{q} \to \sepTL{\cF}{s}{q}$ is bounded,
suppose $h\in \sepTL{\ff}{s}{q}$.
We shall show that
$f:=T_\psi h=\sum_{\eta\in\cX}h_\eta\psi_\eta$
converges in the $\cF_\infty^{sq}$-norm.
Assume for a moment that $h$ is finitely supported.
Using that $\Psi_n*\Psi_k(x)=0$ for $|k-n|>1$ we have
\begin{equation*}
|\Psi_n*f(x)|\le\sum_{k=n-1}^{n+1}\sum_{\eta\in\cX_k}|h_\eta||\Psi_n*\psi_\eta(x)|
\le c \sum_{k=n-1}^{n+1}\sum_{\eta\in\cX_k}\frac{b^{k(d-1)/2}|h_\eta|}{(1+b^k\rho(x,\eta))^{M}}
\end{equation*}
and
\begin{equation}\label{thm:equiv01}
(b^{sn}|\Psi_n*f(x)|)^q
\le c \sum_{k=n-1}^{n+1}b^{k(s+(d-1)/2)q}\Bigg(\sum_{\eta\in\cX_k}\frac{|h_\eta|}{(1+b^k\rho(x,\eta))^{M}}\Bigg)^q.
\end{equation}

Let $\xi\in\cX_j$. Using that
\begin{equation*}
(1+N_\zeta\rho(x,\eta))^{-M}
\le c(1+N_\zeta\rho(\zeta,\eta))^{-M},\quad \forall x\in Q_\zeta,~\eta\in\SS,
\end{equation*}
and \eqref{def-f-space-infty5} we get from \eqref{thm:equiv01}
\begin{align}\label{thm:equiv02}
\frac{1}{|Q_\xi|}&\int_{Q_\xi}\sum_{n=j}^\infty (b^{sn}|\Psi_n*f(x)|)^qd\sigma(x)
\\
&\le c \sum_{n=j}^\infty\sum_{k=n-1}^{n+1}\sum_{\substack{\zeta\prec\xi\\ \zeta\in\cX_k}}b^{k(s+(d-1)/2)q}
\Bigg(\sum_{\eta\in\cX_k}\frac{|h_\eta|}{(1+b^k\rho(\zeta,\eta))^{M}}\Bigg)^q\frac{|Q_\zeta|}{|Q_\xi|} \nonumber
\\
&\le c \|\Omega^{(\infty,M)}|h|\|_{\ff_\infty^{sq}}^q. \nonumber
\end{align}
From \eqref{F-norm-infty}, \eqref{thm:equiv02}, and \thmref{thm:almost_diag} it follows that
\begin{equation}\label{T-bound}
\|T_\psi h\|_{\cF_\infty^{s q}}\le c \|\Omega^{(\infty,M)}|h|\|_{\ff_\infty^{sq}}\le c \|h\|_{\ff_\infty^{sq}}
\end{equation}
for any finitely sequences $h$.

Consider now an arbitrary $h\in \sepTL{\ff}{s}{q}$.
Let $\{h_{\eta_j}\}_{j\ge 1}$ be an arbitrary ordering of the coordinates $\{h_\eta\}$ of $h$
and set $h_k:= (h_{\eta_1}, \dots, h_{\eta_k}, 0, \dots)$, $k\in\NN$.
From the definition of $\sepTL{\ff}{s}{q}$ (Definition~\ref{def:VTL_infty}) it redily follows that
$\|h-h_k\|_{\ff_\infty^{sq}} \to 0$.
We now apply \eqref{T-bound} to obtain
\begin{equation*}
\|T_\psi h_k - T_\psi h_m\|_{\cF_\infty^{s q}}\le c \|h_k-h_m\|_{\ff_\infty^{sq}} \to 0
\quad\hbox{as}\quad k, m\to \infty.
\end{equation*}
Therefore, $\{T_\psi h_k\}$ is a Cauchy sequence in $\cF_\infty^{s q}$
and since $\cF_\infty^{s q}$ is complete $\{T_\psi h_k\}$ is convergent.
Furthermore, using \eqref{T-bound}
\begin{equation*}
\|T_\psi h_k\|_{\cF_\infty^{s q}}\le c \|h_k\|_{\ff_\infty^{sq}}, \quad \forall k\in\NN,
\quad\hbox{implying}\quad
\|T_\psi h\|_{\cF_\infty^{s q}}\le c \|h\|_{\ff_\infty^{sq}}.
\end{equation*}
i.e. the operator $T_\psi$ is bounded.

The unconditional convergence of the series $\sum_{\eta\in\cX}h_\eta\psi_\eta$ in the $\cF_\infty^{s q}$-norm
follow from the fact that if $h\in \sepTL{\ff}{s}{q}$ is as above and
$\tilde{h}_k:=(0, \dots, 0, h_{\eta_{k+1}}, h_{\eta_{k+2}}, \dots)$, then
$
\|T_\psi \tilde{h}_k\|_{\cF_\infty^{s q}}\le c \|\tilde{h}_k\|_{\ff_\infty^{sq}} \to 0
$
as $k\to \infty$.
Therefore, the operator $T_\psi: \sepTL{\ff}{s}{q} \to \sepTL{\cF}{s}{q}$ is well defined and bounded.

\smallskip

We next show that $S_{\psi}: \cF_\infty^{s q} \to \ff_\infty^{s q}$ is bounded.
Assume that $f\in\cF_\infty^{s q}$. For $\eta\in\cX_n$ set $H_\eta:=b^{-n(d-1)/2}\max_{x\in Q_\eta}|\Psi_n*f(x)|$
and
$$h_\eta:=b^{-n(d-1)/2}\max\{\min_{x\in Q_\zeta}|\Psi_n*f(x)| : \zeta\in\cX_{n+\kappa}, \zeta\prec\eta\},$$
where $\kappa$ is from \lemref{lem:sup_inf}.
Assume that for a given $\eta\in\cX$ the maximum in the definition of $h_\eta$ is attained for $\zeta=\zeta_0$.
Then
\begin{multline}\label{thm:equiv03}
|Q_\eta|^{-q/2}|h_\eta|^q\le c\min_{x\in Q_{\zeta_0}}|\Psi_n*f(x)|^q
\le \frac{c}{|Q_{\zeta_0}|}\int_{Q_{\zeta_0}}|\Psi_n*f(x)|^q d\sigma(x)\\
\le c\frac{|Q_\eta|}{|Q_{\zeta_0}|}\frac{1}{|Q_\eta|}\int_{Q_\eta}|\Psi_n*f(x)|^q d\sigma(x)
\le \frac{c}{|Q_\eta|}\int_{Q_\eta}|\Psi_n*f(x)|^q d\sigma(x).
\end{multline}
Using \eqref{dyadic_eq:6} and \eqref{thm:equiv03} we obtain, for any $\xi\in\cX_j$ and $j\in\NN_0$,
\begin{align}\label{thm:equiv06}
\sum_{Q_\eta\subset Q_\xi}&\big[|Q_\eta|^{-s/(d-1)-1/2}|h_{\eta}|\big]^q \frac{|Q_\eta|}{|Q_\xi|}
\\
&=\sum_{n=j}^\infty \sum_{\substack{\eta\prec\xi\\ \eta\in\cX_n}}
|Q_\eta|^{-sq/(d-1)}|Q_\eta|^{-q/2}|h_{\eta}|^q \frac{|Q_\eta|}{|Q_\xi|} \nonumber
\\
&\le c\sum_{n=j}^\infty\sum_{\substack{\eta\prec\xi\\ \eta\in\cX_n}}
b^{nsq}\frac{1}{|Q_\eta|}\int_{Q_\eta}|\Psi_n*f(x)|^q d\sigma(x)\frac{|Q_\eta|}{|Q_\xi|} \nonumber
\\
&=\frac{c}{|Q_\xi|}\int_{Q_\xi}\sum_{n=j}^\infty (b^{ns}|\Psi_n*f(x)|)^q d\sigma(x). \nonumber
\end{align}
In view of \eqref{def-f-space-infty2} and \eqref{F-norm-infty} inequality \eqref{thm:equiv06} implies
\begin{equation}\label{thm:equiv04}
\|h\|_{\ff_\infty^{s q}}\le c \|f\|_{\cF_\infty^{s q}},
\end{equation}
which, in particular, gives that $h\in \ff_\infty^{s q}$.
From \eqref{def-psi-xi}, \eqref{cubature_w}, the definitions of $H(g)$ and $h(g)$ and \lemref{lem:sup_inf}
we get for $\eta\in\cX_n$, $n\in\NN_0$,
\begin{align}\label{thm:equiv05}
|\left\langle f,\psi_\eta\right\rangle|
&=\widetilde{w}_\eta^{1/2}|\Psi_n*f(\eta)|
\le c b^{-n(d-1)/2}|\Psi_n*f(\eta)|
\\
&\le c H_\eta
\le c (\Omega^{(\infty,M)}H)_\eta\le c (\Omega^{(\infty,M)}h)_\eta. \nonumber
\end{align}
Combining \eqref{thm:equiv05}, \thmref{thm:almost_diag} and \eqref{thm:equiv04} we obtain
\begin{equation*}
\|S_{\psi} f\|_{\ff_\infty^{s q}}=\|\{\left\langle f,\psi_\eta\right\rangle\}\|_{\ff_\infty^{s q}}
\le c\|\Omega^{(\infty,M)}h\|_{\ff_\infty^{s q}}
\le c\|h\|_{\ff_\infty^{s q}}\le c \|f\|_{\cF_\infty^{s q}},
\end{equation*}
which proves the boundedness of $S_{\psi}$.
Further, if $f$ satisfies \eqref{VF-norm-infty0}, then \eqref{thm:equiv06} implies that $S_{\psi}f$ satisfies Definition~\ref{def:VTL_infty},
i.e. $S_{\psi}: \sepTL{\cF}{s}{q} \to \sepTL{\ff}{s}{q}$.

Finally, the identity $T_\psi\circ S_{\psi}= I$ on $\sepTL{\cF}{s}{q}$ is immediate from \eqref{discretize}.

In the case $q=\infty$ we establish the theorem with the same arguments replacing \thmref{thm:almost_diag} by \propref{almost_diag_infty}.
\end{proof}

\begin{rem}
If we replace $\sepTL{\cF}{s}{q}$ and $\sepTL{\ff}{s}{q}$ with $\cF_\infty^{sq}$ and $\ff_\infty^{sq}$, respectively, then
\thmref{F-norm-infty-equiv} will remain valid except for the type of convergence in \eqref{F-infty-calderon}.
Note that \thmref{thm:prop-frame} implies that for any $f\in\cF_\infty^{sq}$
the series in $f=\sum_{\xi\in \cX}\langle f, \psi_\xi\rangle \psi_\xi$ converges unconditionally in the $\cF_p^{sq}$-norm for every $p<\infty$.
The proof is carried out along the same lines if we start it with $h\in\ff_\infty^{sq}$.
\end{rem}

In \cite{IP2} we constructed a system $\Theta=\{\theta_\xi : \xi\in\cX\}$
whose elements $\theta_\xi$ are linear combinations of a fixed number of shifts of the Newtonian kernel.
Also, it is shown in \cite{IP2} that $\Theta$ and its dual system $\tilde{\Theta}$
form a pair of dual frames for the spaces $\cB_p^{s q}$ and $\cF_p^{sq}$ when $p<\infty$.
In the following theorem we show that $\Theta$ and $\tilde{\Theta}$ also form
a pair of dual frames for the separable Triebel-Lizorkin spaces $\sepTL{\cF}{s}{q}$.

\begin{thm}\label{thm:frame-infty}
Assume $d\ge 2$, $A>1$, and
let $\Theta=\{\theta_\xi\}_{\xi\in\cX}$ be the real-valued system constructed in \cite[(6.36)--(6.37)]{IP2} with parameters
\begin{equation}\label{cond-K}
K \ge \left\lceil Ad\right\rceil, K\in 2\NN, \quad M = K+d.
\end{equation}
If the constant $\gamma_0$ in the construction of $\{\theta_\xi\}_{\xi\in\cX}$ is sufficiently small, then:

$(a)$ The synthesis operator $T_\theta$ defined by $T_\theta h:= \sum_{\xi\in \cX}h_\xi\theta_\xi$
on sequences of complex numbers $h=\{h_\xi\}_{\xi\in\cX}$ is bounded as a map
$T_\theta: \sepTL{\ff}{s}{q} \mapsto \sepTL{\cF}{s}{q}$
uniformly with respect to $(s, q)\in\QQ_\infty(A)$ with $\QQ_\infty(A)$ from \eqref{indices-infty}.

$(b)$ The operator
\begin{equation}\label{def:operator-T}
Tf:=\sum_{\xi\in\cX} \langle f, \psi_\xi\rangle \theta_\xi=T_{\theta}S_{\psi}f,
\end{equation}
is invertible on $\sepTL{\cF}{s}{q}$ and $T$, $T^{-1}$ are bounded on $\sepTL{\cF}{s}{q}$
uniformly with respect to $(s, q)\in\QQ_\infty(A)$.

$(c)$ For $(s, q)\in \QQ_\infty(A)$ the dual system $\tilde\Theta=\{\tilde\theta_\xi\}_{\xi\in\cX}$ consists of bounded linear functionals
on $\sepTL{\cF}{s}{q}$ defined by
\begin{equation}\label{def-f-dual-f}
\tilde\theta_\xi(f)=
\langle f, \tilde\theta_\xi\rangle
:= \sum_{\eta\in\cX}
\langle T^{-1}\psi_\eta, \psi_\xi\rangle \langle f, \psi_\eta\rangle
\quad \hbox{for}\;\; f\in \sepTL{\cF}{s}{q},
\end{equation}
with the series converging absolutely.
Also, the analysis operator
$$
S_{\tilde\theta}: \sepTL{\cF}{s}{q} \mapsto \sepTL{\ff}{s}{q},\quad
S_{\tilde\theta} = S_{\psi}T^{-1}T_{\psi}S_{\psi}=S_{\psi}T^{-1},
$$
is uniformly bounded with respect to $(s, q)\in\QQ_\infty(A)$ and
$T_\theta\circ S_{\tilde{\theta}}= I$ on $\sepTL{\cF}{s}{q}$.
Moreover, $\{\theta_\xi\}_{\xi\in\cX}$, $\{\tilde\theta_\xi\}_{\xi\in\cX}$
form a pair of dual frames for $\sepTL{\cF}{s}{q}$ in the following sense:
For any $f\in \sepTL{\cF}{s}{q}$
\begin{equation}\label{frame-B}
f=\sum_{\xi\in\cX} \langle f, \tilde\theta_\xi\rangle \theta_\xi
\quad\hbox{and}\quad
\|f\|_{\cF_\infty^{sq}} \sim \|\{\langle f, \tilde\theta_\xi\rangle\}\|_{\ff_\infty^{sq}},
\end{equation}
where the convergence is unconditional in $\sepTL{\cF}{s}{q}$.
\end{thm}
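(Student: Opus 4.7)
The plan is to mirror the proof of Theorem~\ref{thm:prop-frame} (which is \cite[Theorem 6.9]{IP2}) with the $p<\infty$ ingredients replaced by their newly established $p=\infty$ counterparts. Concretely, the role of the frame decomposition for $\cF^{sq}_p$, $p<\infty$, is taken by Theorem~\ref{F-norm-infty-equiv}, and the role of the almost-diagonal boundedness for $\ff^{sq}_p$, $p<\infty$, is taken by Theorem~\ref{thm:almost_diag} together with Proposition~\ref{almost_diag_infty}. The new feature compared to \cite[Theorem 6.9]{IP2} is that we must track separability throughout, which is exactly what the ``moreover'' clauses in Theorem~\ref{thm:almost_diag} and Proposition~\ref{almost_diag_infty} were designed to provide.

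The starting point is the almost-diagonal estimate from \cite[\S6]{IP2}: with the parameters $K, M$ as in \eqref{cond-K}, the construction of $\{\theta_\xi\}$ yields
\[
|\langle \theta_\eta,\psi_\xi\rangle| \le c\,\omega^{(K,M)}_{\xi,\eta},
\qquad
|\langle (\theta_\eta-\psi_\eta),\psi_\xi\rangle| \le c\,\gamma_0^{\kappa}\,\omega^{(K,M)}_{\xi,\eta}
\]
for some $\kappa>0$, where $\omega^{(K,M)}_{\xi,\eta}$ is from Definition~\ref{def:star-star-seq}. The condition $K\ge\lceil Ad\rceil$ and $M=K+d$ is chosen so that \eqref{almost_diag:2}-\eqref{almost_diag:3} in Theorem~\ref{thm:almost_diag} hold simultaneously for every $(s,q)\in\QQ_\infty(A)$. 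For part~(a), using Theorem~\ref{F-norm-infty-equiv} we have
\[
\|T_\theta h\|_{\cF^{sq}_\infty} \sim \|\{\langle T_\theta h,\psi_\xi\rangle\}\|_{\ff^{sq}_\infty}
= \bigl\|\bigl\{\textstyle\sum_\eta h_\eta\langle\theta_\eta,\psi_\xi\rangle\bigr\}\bigr\|_{\ff^{sq}_\infty},
\]
so the almost-diagonal bound and Theorem~\ref{thm:almost_diag}/Proposition~\ref{almost_diag_infty} yield $\|T_\theta h\|_{\cF^{sq}_\infty}\le c\|h\|_{\ff^{sq}_\infty}$ uniformly in $(s,q)\in\QQ_\infty(A)$, and the ``moreover'' clause guarantees $T_\theta h\in\sepTL{\cF}{s}{q}$ when $h\in\sepTL{\ff}{s}{q}$.

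For part~(b), $T=T_\theta S_\psi$ is bounded on $\sepTL{\cF}{s}{q}$ by combining (a) with Theorem~\ref{F-norm-infty-equiv}. To invert $T$, observe that $T-I = T_{\theta-\psi}S_\psi$; transferring through the same frame decomposition, $T-I$ acts on $\sepTL{\cF}{s}{q}$ via the almost-diagonal matrix $\langle(\theta_\eta-\psi_\eta),\psi_\xi\rangle$, which by the second estimate above is bounded by $c\gamma_0^\kappa\,\omega^{(K,M)}_{\xi,\eta}$. Hence $\|T-I\|\le c\gamma_0^\kappa<1$ when $\gamma_0$ is chosen sufficiently small (uniformly in $(s,q)\in\QQ_\infty(A)$), and a Neumann series gives $T^{-1}$ bounded on $\sepTL{\cF}{s}{q}$.

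For part~(c), the absolute convergence of \eqref{def-f-dual-f} follows from the almost-diagonal estimate for the matrix $\langle T^{-1}\psi_\eta,\psi_\xi\rangle$ (inherited from $\Omega^{(K,M)}$ via Neumann iteration and the fact that products of almost-diagonal matrices are again almost-diagonal up to a change of constants). This gives $\tilde\theta_\xi$ as a bounded functional on $\sepTL{\cF}{s}{q}$, with $S_{\tilde\theta}=S_\psi T^{-1}$ bounded into $\sepTL{\ff}{s}{q}$ by composing the bounds on $S_\psi$ and $T^{-1}$. The identity $T_\theta\circ S_{\tilde\theta}=T\circ T^{-1}=I$ on $\sepTL{\cF}{s}{q}$ gives the reconstruction \eqref{frame-B}, and the norm equivalence comes from applying Theorem~\ref{F-norm-infty-equiv} to $T^{-1}f$ combined with $\|T^{-1}f\|_{\cF^{sq}_\infty}\sim\|f\|_{\cF^{sq}_\infty}$; unconditional convergence transfers from the needlet expansion via the boundedness of $T_\theta$.

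The main obstacle is verifying that the composition of almost-diagonal matrices arising in $T^{-1}$ remains almost-diagonal of the same type $\Omega^{(K',M')}$ with $K', M'$ still satisfying the hypotheses of Theorem~\ref{thm:almost_diag} over the full parameter range $\QQ_\infty(A)$. This requires a careful bookkeeping of constants showing that $\omega^{(K,M)}\cdot \omega^{(K,M)}\le c\,\omega^{(K',M')}$ with $K'$ and $M'$ still large enough --- the same composition lemma used in the $p<\infty$ case of \cite[Theorem 6.9]{IP2} applies here, since its proof uses only Lemma~\ref{lem:Besov_1}, which holds for all $b>1$.
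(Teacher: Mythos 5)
Your proposal is correct in substance but reaches the result by a different route than the paper. The paper's actual proof is extremely short: it invokes the abstract transfer theorem \cite[Theorem 4.4]{IP2}, which is a general framework for passing a frame decomposition from an ``old'' self-dual frame ($\{\psi_\xi\}$) to a ``new'' system ($\{\theta_\xi\}$) across a family of quasi-Banach spaces. The proof simply verifies its hypotheses: conditions {\bf A1}--{\bf A2} are given by Theorem~\ref{F-norm-infty-equiv}, {\bf A3}(a)--(b) by Definition~\ref{def:TL_infty}, {\bf A3}(c) by the density of compactly supported sequences in $\sepTL{\ff}{s}{q}$ (Proposition~\ref{prop:sep-TL-infty}(b)) together with the density of $C^\infty(\SS)$ in $\sepTL{\cF}{s}{q}$ (Proposition~\ref{prop:TL-infty}(b)), and the smallness condition \cite[(4.6)]{IP2} by Theorem~\ref{thm:almost_diag} plus the estimate \cite[(6.41)]{IP2}. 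You instead unroll what that abstract theorem does internally: the almost-diagonal bound on $\langle\theta_\eta,\psi_\xi\rangle$ and on $\langle\theta_\eta-\psi_\eta,\psi_\xi\rangle$, Theorem~\ref{thm:almost_diag}/Proposition~\ref{almost_diag_infty} for boundedness, a Neumann-series inversion of $T$, and a composition lemma for almost-diagonal matrices to control $T^{-1}$ entrywise for part (c). This is the correct mechanism --- it is precisely what \cite[Theorem 4.4]{IP2} packages --- and your identification of the ``moreover'' clauses in Theorem~\ref{thm:almost_diag} and Proposition~\ref{almost_diag_infty} as the key new separability-tracking ingredient is exactly right. What the paper's route buys is not having to re-examine the composition of almost-diagonal operators and the convergence of the Neumann series in the quasi-Banach setting; that bookkeeping (which you correctly flag as the main obstacle) is done once in \cite{IP2} and simply cited here. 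What your route buys is transparency about where the parameter conditions $K\ge\lceil Ad\rceil$, $M=K+d$ and the smallness of $\gamma_0$ are actually consumed. One small caveat: your unrolled argument implicitly needs the density conditions (in particular that compactly supported sequences are dense in $\sepTL{\ff}{s}{q}$) to push the unconditional convergence and the identity $T_\psi S_\psi=I$ onto the separable subspaces; the paper makes this an explicit bullet point, and so should any written-out version of your approach.
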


\begin{proof}
We shall derive this theorem as a consequence of \cite[Theorem 4.4]{IP2}.
We define the class $\YY$ of (quasi-)Banach spaces by $\YY=\{\sepTL{\cF}{s}{q}(\SS) : (s,q)\in\QQ_\infty(A)$\}
(see \propref{prop:TL-infty}~(a))
and the class $\YY_d$ of (quasi-) Banach sequence spaces by $\YY_d=\{\sepTL{\ff}{s}{q}(\cX) : (s,q)\in\QQ_\infty(A)$\}
(see \propref{prop:sep-TL-infty}~a)).
The respective (quasi-) triangle inequalities are satisfied with a constant $2^{1/q_*-1}$, $q_*=\min\{1,q\}$.
The old frame is $\{\psi\}$ and it is a self-dual frame for $\sepTL{\cF}{s}{q}\in\YY$.
Conditions {\bf A1} and {\bf A2} from \cite{IP2} are established in \thmref{F-norm-infty-equiv}.
Conditions {\bf A3}(a) and {\bf A3}(b) from \cite{IP2} are immediate from Definition~\ref{def:TL_infty}.

For $\sepTL{\cF}{s}{q}$ and $\sepTL{\ff}{s}{q}$ we also have:
\begin{itemize}
\item The set of all test functions, i.e. $C^\infty(\SS)$, is a dense subset of each $\sepTL{\cF}{s}{q}$ in view of \propref{prop:TL-infty}~(b);
\item  Compactly supported sequences belong to $\sepTL{\ff}{s}{q}$ and are dense in it (see \propref{prop:sep-TL-infty}~(b)), i.e. condition {\bf A3}(c) is fulfilled.
\end{itemize}

Finally, condition \cite[(4.6)]{IP2} is a consequence of \thmref{thm:almost_diag} and inequality \cite[(6.41)]{IP2}.
Thus, all assumptions of \cite[Theorem 4.4]{IP2} are satisfied by $\YY, \YY_d$, which completes the proof.
\end{proof}

\section{Identification of $\BMO(\SS)$ with $\cF_\infty^{02}(\SS)$}\label{sec:BMO-F-spaces}

The intimate relation between the spaces $\BMO$ with $\cF_\infty^{02}$ on $\SS$
will play a critical role in this study.

\begin{defn}\label{def_BMO}
The \emph{bounded mean oscillation} space $\BMO(\SS)$ is defined as the set of all functions $f\in L^1(\SS)$
such that
\begin{equation*}
\sup_{\substack{B=B(y,r)\\y\in\SS,~0<r\le \pi}}\frac{1}{|B|}\int_B|f(x)-\avg_{B} f| d\sigma(x) <\infty,
\quad \avg_{B} f:=\frac{1}{|B|}\int_B f(x) d\sigma(x).
\end{equation*}
The norm in $\BMO$ is given by
\begin{equation*}
\|f\|_{\BMO}:=|\avg_{\SS} f|+\sup_{\substack{B=B(y,r)\\y\in\SS,~0<r\le \pi}}\frac{1}{|B|}\int_B|f(x)-\avg_{B} f| d\sigma(x).
\end{equation*}
\end{defn}

\begin{defn}\label{def_VMO}
The \emph{vanishing mean oscillation} space $\VMO(\SS)$ is defined as the set of all functions $f\in\BMO(\SS)$
such that
\begin{equation*}
\lim_{\delta\to 0}\sup_{\substack{B=B(y,r)\\y\in\SS,~0<r\le \delta}}
\frac{1}{|B|}\int_B|f(x)-\avg_{B} f| d\sigma(x) =0,
\end{equation*}
$\VMO$ is equipped with the $\BMO$ norm.
\end{defn}

Here we state some well known properties of the spaces $\BMO$ and $\VMO$.

\begin{prop}\label{prop:BMO}
We have

$(a)$
$\BMO(\SS)$ and $\VMO(\SS)$ are Banach spaces.

$(b)$
$\VMO(\SS)$ is the closure of $C^\infty(\SS)$ in the $\BMO(\SS)$ norm.

$(c)$
$\VMO(\SS)$ is a \emph{separable space}, while $\BMO(\SS)$ is \emph{non-separable}.

\end{prop}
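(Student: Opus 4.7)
The plan is to treat the three parts in sequence, relying on compactness of $\SS$ and on classical arguments of John--Nirenberg and Sarason adapted to the sphere.

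For part (a), observing that $\|\cdot\|_\BMO$ is a norm is routine (the separating property is supplied by the global-average term $|\avg_\SS f|$). Let $\{f_n\}$ be a Cauchy sequence in $\BMO(\SS)$. The scalars $\avg_\SS f_n$ form a Cauchy sequence in $\CC$. Because $\SS$ is a bounded space of homogeneous type, the John--Nirenberg inequality on the sphere yields $\|g-\avg_\SS g\|_{L^1(\SS)} \le c\|g\|_\BMO$, whence $\{f_n\}$ is Cauchy in $L^1(\SS)$ with some limit $f$. Extracting an a.e.-convergent subsequence and applying Fatou's lemma to $\int_B |(f_n-f_m)-\avg_B(f_n-f_m)|\,d\sigma$ as $m\to\infty$ shows $\|f_n-f\|_\BMO\to 0$. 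Completeness of $\VMO(\SS)$ is then immediate because the vanishing-mean-oscillation condition of Definition~\ref{def_VMO} passes to $\BMO$-norm limits, so $\VMO(\SS)$ is closed in $\BMO(\SS)$.

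For part (b), the inclusion $C^\infty(\SS)\subset\VMO(\SS)$ is clear: smoothness on the compact sphere gives uniform continuity, hence the oscillation on $B(y,r)$ tends to $0$ with $r$. For density, fix a smooth radial mollifier $K_\epsilon$ on $\SS$, for instance a rescaling of the kernel in \thmref{thm:localization} with $N=1/\epsilon$ and $\lambda\equiv 1$ near $0$, normalized by $\int_\SS K_\epsilon\,d\sigma = 1$. For $f\in\VMO(\SS)$ one has $K_\epsilon*f\in C^\infty(\SS)$, and I would show $\|f-K_\epsilon*f\|_\BMO\to 0$ by splitting the supremum over $B=B(y,r)$ at the threshold $r=\sqrt\epsilon$. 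For $r\le\sqrt\epsilon$ the mean oscillation of $K_\epsilon*f$ on $B$ is controlled by the mean oscillation of $f$ on a slightly enlarged cap (using the fast decay of $K_\epsilon$), so the $\VMO$ hypothesis on $f$ supplies smallness uniformly in $\epsilon$. For $r\ge\sqrt\epsilon$, a standard $(L^2,L^1)$ passage combined with $\|f-K_\epsilon*f\|_{L^2(\SS)}\to 0$ gives smallness as $\epsilon\to 0$.

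For part (c), separability of $\VMO(\SS)$ follows from (b) together with separability of $(C^\infty(\SS),\|\cdot\|_\infty)$ and the trivial bound $\|g\|_\BMO\le 2\|g\|_\infty + |\avg_\SS g|$. For non-separability of $\BMO(\SS)$, I would exhibit an uncountable uniformly $\BMO$-separated family: set $f_p(x):=\log(1/\rho(x,p))$ for $p\in\SS$. Standard calculations, analogous to those for $\log|x|$ on $\RR^d$, show that $\{f_p\}_{p\in\SS}\subset\BMO(\SS)$ with uniformly bounded norms. For distinct $p\ne q$ the difference $f_p-f_q$ retains the logarithmic singularity of $f_p$ at $p$ while $f_q$ is bounded and Lipschitz on any small cap around $p$; this forces the mean oscillation of $f_p-f_q$ on a cap centered at $p$ of radius comparable to $\rho(p,q)$ to be bounded below by a positive absolute constant, so $\|f_p-f_q\|_\BMO\ge c>0$ for all $p\ne q$.

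The main obstacle is the density statement in part (b), the spherical analog of Sarason's theorem. The delicate point is the small-scale regime $r\le\sqrt\epsilon$: one must establish that $K_\epsilon*f$ inherits the vanishing mean oscillation of $f$ in a quantitative way, which requires estimating the oscillation of $K_\epsilon*f$ on $B(y,r)$ in terms of the oscillations of $f$ on caps of radius of order $\epsilon$, not merely of order $r$. The rapid off-diagonal decay of $K_\epsilon$ guaranteed by \thmref{thm:localization} is what makes this balance possible.
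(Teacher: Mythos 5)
The paper states Proposition~\ref{prop:BMO} without proof, explicitly citing these facts as well known, so there is no in-paper argument to compare against. Your proposal follows the standard classical route: John--Nirenberg and Fatou for completeness in part (a), mollification as in Sarason's theorem for part (b), and an uncountable, uniformly $\BMO$-separated family of logarithmic singularities for non-separability in part (c). Parts (a) and (c) are sound as sketched; in particular, the verification that $\VMO(\SS)$ is closed in $\BMO(\SS)$ and the lower bound $\|f_p-f_q\|_{\BMO}\ge c>0$ (obtained by localizing on a cap of radius comparable to $\rho(p,q)$ centered at $p$) are both correct.

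The one place I would tighten is the split at $r=\sqrt\eps$ in part (b). In the regime $r\ge\sqrt\eps$ you propose to bound the mean oscillation of $f-K_\eps*f$ on $B=B(y,r)$ by $|B|^{-1/2}\|f-K_\eps*f\|_{L^2(\SS)}$, but when $r$ is only comparable to $\sqrt\eps$ one has $|B|^{-1/2}\sim\eps^{-(d-1)/4}$, and $\|f-K_\eps*f\|_{L^2}$ tends to zero with no universal rate for a general $f\in L^2\cap\BMO$, so the product is not controlled. The standard fix is to split at a threshold $\rho_0$ that is chosen first (independently of $\eps$), small enough that the $\VMO$ modulus of $f$ at scale $C\rho_0$ is below a prescribed tolerance, where $C$ accounts for the cap enlargement induced by the kernel spread. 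Then for $r>\rho_0$ the $L^2$ passage works because $|B|^{-1/2}\le c(\rho_0)$ is a fixed constant and $\|f-K_\eps*f\|_{L^2}\to 0$; and for $r\le\rho_0$ and $\eps\le\rho_0$ one bounds the oscillation of both $f$ and $K_\eps*f$ on $B$ by the $\VMO$ modulus at scale $C\max(r,\eps)\le C\rho_0$, using the decay in Theorem~\ref{thm:localization} to transfer oscillation from caps of kernel width to $B$. That last estimate is exactly the point you flag as the ``delicate point,'' and it does need to be written out, but once the threshold is decoupled from $\eps$ the argument closes.
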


\begin{prop}\label{prop_BMO}
Let $0<q<\infty$. Then:

$(a)$ The condition
\begin{equation*}
\sup_{\substack{B=B(y,r)\\y\in\SS,~0<r\le \pi}}\frac{1}{|B|}\int_B|f(x)-\avg_B f|^q d\sigma(x)<\infty
\end{equation*}
is equivalent to $f\in\BMO(\SS)$ and
\begin{equation*}
\|f\|_{\BMO}\sim |\avg_{\SS} f|+\sup_{\substack{B=B(y,r)\\y\in\SS,~0<r\le \pi}}\bigg(\frac{1}{|B|}\int_B|f(x)-\avg_B f|^q d\sigma(x)\bigg)^{1/q}.
\end{equation*}

$(b)$ For $f\in\BMO(\SS)$ the condition
\begin{equation*}
\lim_{\delta\to 0}\sup_{\substack{B=B(y,r)\\y\in\SS,~0<r\le \delta}}\frac{1}{|B|}\int_B|f(x)-\avg_B f|^q d\sigma(x)=0
\end{equation*}
is equivalent to $f\in\VMO(\SS)$.
\end{prop}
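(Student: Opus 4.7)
The plan is to deduce both parts of the proposition from the \emph{John--Nirenberg inequality} on the sphere:
\begin{equation*}
|\{x\in B : |f(x)-\avg_{B} f|>\lambda\}|\le c_1 e^{-c_2\lambda/\|f\|_{\BMO}}\,|B|,
\qquad f\in\BMO(\SS),
\end{equation*}
valid for every spherical cap $B\subset\SS$. Since $(\SS,\rho,\sigma)$ is a space of homogeneous type (\eqref{sph_cap2}--\eqref{sph_cap} express doubling), the classical Calder\'on--Zygmund stopping-time proof goes through essentially verbatim, using geodesic balls in place of dyadic cubes. I would either quote this from the homogeneous-space literature or reprove it; this is the principal technical ingredient.

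For part (a), the direction $\BMO(\SS)\Rightarrow L^q$-oscillation finite follows by integrating the tail bound against $q\lambda^{q-1}\,d\lambda$, which yields
\begin{equation*}
\frac{1}{|B|}\int_B |f-\avg_B f|^q\,d\sigma\le C_q\|f\|_{\BMO}^q
\end{equation*}
uniformly in $B$. For the converse, when $q\ge 1$ H\"older's inequality gives $\avg_B|f-\avg_B f|\le(\avg_B|f-\avg_B f|^q)^{1/q}$ directly. The delicate case is $0<q<1$, where I would argue via a local median: a uniform bound $M_q:=\sup_B(\avg_B|f-\avg_B f|^q)^{1/q}<\infty$ together with Chebyshev's inequality shows that the Str\"omberg $s$-oscillation $\omega_f(B,s):=\inf_{c}\inf\{\lambda : |\{x\in B : |f(x)-c|>\lambda\}|\le s|B|\}$ is bounded by $(M_q^q/s)^{1/q}$ uniformly in $B$. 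By Str\"omberg's characterisation of $\BMO$ on spaces of homogeneous type, a uniform bound on $\omega_f(B,s)$ for any $s\in(0,1/2)$ is equivalent to membership in $\BMO$, closing this case.

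For part (b), since $f\in\BMO(\SS)$ is assumed, John--Nirenberg is in force and the equivalence becomes a matter of transferring the vanishing condition across the exponent $q$ by interpolation. For $0<q\le 1$, Jensen's inequality applied to the concave function $t\mapsto t^q$ gives $(\avg_B|f-\avg_B f|^q)^{1/q}\le\avg_B|f-\avg_B f|$, so VMO implies the $L^q$-vanishing. For $q\ge 1$, I would fix any $p_2\in(q,\infty)$, use John--Nirenberg to obtain the uniform bound $(\avg_B|f-\avg_B f|^{p_2})^{1/p_2}\le C_{p_2}\|f\|_{\BMO}$, and then apply log-convexity of $p\mapsto\log\|g\|_{L^p(B,d\sigma/|B|)}$ with $g=|f-\avg_B f|$, $p_1=1<q<p_2$, to deduce
\begin{equation*}
\Big(\frac{1}{|B|}\int_B|f-\avg_B f|^q\,d\sigma\Big)^{1/q}
\le C_q\,\|f\|_{\BMO}^{1-\theta}\,\Big(\frac{1}{|B|}\int_B|f-\avg_B f|\,d\sigma\Big)^{\theta},
\end{equation*}
for some $\theta=\theta(q,p_2)\in(0,1)$; this vanishes as the $L^1$-oscillation vanishes. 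The reverse direction (assuming the $L^q$-vanishing) is symmetric: for $q\ge 1$ H\"older suffices, while for $0<q<1$ the same log-convexity with $p_1=q<1<p_2$ bounds the $L^1$-oscillation by a positive power of the $L^q$-oscillation times a power of $\|f\|_{\BMO}$.

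The main obstacle will be the careful setup of the John--Nirenberg inequality on $\SS$ (or a clean reference to it in the homogeneous-space framework) together with the subtlety of the sub-unit exponent in part (a), where the space $\BMO(\SS)$ has to be \emph{constructed} from the $L^q$ assumption rather than assumed at the outset; Str\"omberg's median criterion is the cleanest route I know around this difficulty.
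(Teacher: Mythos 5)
Your proof is correct and follows essentially the same approach as the paper: the John--Nirenberg inequality for part (a), and monotonicity plus log-convexity (H\"older interpolation) of the normalized $L^q$ oscillations for part (b). The paper's own proof is only a two-line citation; your write-up usefully fills in the genuinely delicate $0<q<1$ converse in part (a) via Str\"omberg's median criterion, a step that the paper's bare reference to the John--Nirenberg paper leaves implicit.
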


\begin{proof}
Part (a) follows from the John-Nirenberg inequality \cite{JN}.

We observe that H\"{o}lder's inequality implies that $\big(|B|^{-1}\int_B|\cdot|^q \big)^{1/q}$
is monotone increasing function of $q$ and
\begin{equation*}
\|\cdot\|_{L^q}\le\|\cdot\|_{L^1}^{1/(2q-1)}\|\cdot\|_{L^{2q}}^{(2q-2)/(2q-1)},\quad q>1.
\end{equation*}
These two properties imply part (b).
\end{proof}

We refer the reader to \cite{FS, Stein} as general references for $\BMO$.

The purpose of this section is to clarify the relationship between $\BMO(\SS)$ and $\cF_\infty^{02}(\SS)$.
To this end we shall use that
\begin{equation}\label{rep-F}
\|f\|_{\cF_\infty^{02}(\SS)}
\sim \sup_{\xi\in\cX}\bigg(\frac{1}{|Q_\xi|}\sum_{Q_\eta\subset Q_\xi}|\langle f, \psi_\eta \rangle|^2\bigg)^{1/2},
\end{equation}
which follows by Theorem~\ref{F-norm-infty-equiv} and \eqref{def-f-space-infty2}.
We shall also use that
\begin{equation}\label{def-BMO}
\|f\|_{\BMO} \sim |\avg_{\SS} f|+\sup_{\substack{B=B(y,r)\\y\in\SS,~0<r\le \pi}}\bigg(\frac{1}{|B|}\int_B|f(x)-\avg_B f|^2d\sigma(x)\bigg)^{1/2},
\end{equation}
which follows from \propref{prop_BMO}~(a) with $q=2$.

\begin{thm}\label{thm:BMO-F}
%Let $\int_{\SS}f(x) d\sigma(x)=0$.
%Then
$f\in \BMO(\SS)$ if and only if $f\in\cF_\infty^{02}(\SS)$ and
\begin{equation}\label{equiv-BMO-F}
\|f\|_{\BMO} \sim \|f\|_{\cF_\infty^{02}(\SS)}.
\end{equation}
Furthermore, $f\in \VMO(\SS)$ if and only if $f\in\sepTL{\cF}{0}{2}(\SS)$.
\end{thm}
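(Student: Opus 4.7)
The plan is to convert both norms to statements about needlet coefficients and compare. By \thmref{F-norm-infty-equiv} with $s=0$, $q=2$ together with \eqref{def-f-space-infty2},
\[
\|f\|_{\cF^{02}_\infty}\sim\sup_{\xi\in\cX}\Bigl(\frac{1}{|Q_\xi|}\sum_{Q_\eta\subset Q_\xi}|\langle f,\psi_\eta\rangle|^2\Bigr)^{1/2},
\]
while \propref{prop_BMO}(a) with $q=2$ gives $\|f\|_{\BMO}\sim|\avg_\SS f|+\sup_B(|B|^{-1}\int_B|f-\avg_B f|^2)^{1/2}$. The term $\langle f,\psi_{e_1}\rangle$ encodes the mean $\avg_\SS f$, so it suffices to compare the oscillation parts. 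The key structural fact is that for $\eta\in\cX_n$ with $n\ge 1$ the needlet $\psi_\eta$ has vanishing mean on $\SS$: since $\supp\varphi\subset[b^{-1},b]$, $Z_0$ does not appear in $\Psi_n$, so $\psi_\eta$ is orthogonal to constants.

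For the direction $\BMO\subset\cF^{02}_\infty$, fix $\xi\in\cX_j$ with $j\ge 1$ and set $B:=B(\xi,\tfrac{b\gamma}{b-1}b^{-j})$, so $Q_\xi\subset B$ and $|B|\sim|Q_\xi|$. Decompose $f=\avg_B f+(f-\avg_B f)\mathbf{1}_{2B}+(f-\avg_B f)\mathbf{1}_{\SS\setminus 2B}$. The constant contributes nothing to $\langle f,\psi_\eta\rangle$ for $\eta\in\cX_n$, $n\ge j\ge 1$, by the vanishing-mean property. For the local piece I use the tight-frame Parseval identity $\|g\|_{L^2}^2\sim\sum_\eta|\langle g,\psi_\eta\rangle|^2$ together with the $L^2$ John--Nirenberg bound to get
\[
\sum_{Q_\eta\subset Q_\xi}|\langle(f-\avg_B f)\mathbf{1}_{2B},\psi_\eta\rangle|^2\le c\|(f-\avg_B f)\mathbf{1}_{2B}\|_{L^2}^2\le c|B|\|f\|_{\BMO}^2.
\]
For the tail, the needlet localization \eqref{local-needlet-0} combined with $\rho(\eta,\xi)\le\mathrm{diam}(Q_\xi)\ll\rho(x,\xi)$ yields $|\psi_\eta(x)|\le cb^{n(d-1)/2}(1+b^n\rho(x,\xi))^{-M}$ for $\eta\in Q_\xi\cap\cX_n$ and $x\in\SS\setminus 2B$. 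Combining this with the annular bound $\int_{2^{k+1}B\setminus 2^kB}|f-\avg_B f|\le c(k+1)|2^kB|\|f\|_{\BMO}$ and summing a geometric series gives $|\langle(f-\avg_B f)\mathbf{1}_{(2B)^c},\psi_\eta\rangle|\le cb^{-n(d-1)/2}b^{-(n-j)M'}\|f\|_{\BMO}$ for any prescribed $M'$. Summing squares over the $\lesssim b^{(n-j)(d-1)}$ indices in each layer $n\ge j$ delivers $\le c|Q_\xi|\|f\|_{\BMO}^2$.

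For the converse direction $\cF^{02}_\infty\subset\BMO$, write $f=\sum h_\eta\psi_\eta$ with $h_\eta=\langle f,\psi_\eta\rangle$ and fix a ball $B=B(y,r)$. Pick $j_0$ with $b^{-j_0-1}<r\le b^{-j_0}$ and $\xi_0\in\cX_{j_0}$ so that $B\subset C_0 Q_{\xi_0}$ and $|Q_{\xi_0}|\sim|B|$ (use \eqref{dyadic_eq:4} and a bounded enlargement). Split $f=f_{\rm near}+f_{\rm far}$ according to whether $Q_\eta\subset Q_{\xi_0}$. Parseval in $L^2$ gives $\|f_{\rm near}\|_{L^2}^2\le c\sum_{Q_\eta\subset Q_{\xi_0}}|h_\eta|^2\le c|Q_{\xi_0}|\|f\|_{\cF^{02}_\infty}^2$, so $|B|^{-1}\int_B|f_{\rm near}-\avg_B f_{\rm near}|^2\le c\|f\|_{\cF^{02}_\infty}^2$. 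For $f_{\rm far}$ I will establish the pointwise oscillation bound $|f_{\rm far}(x)-f_{\rm far}(y)|\le c\|f\|_{\cF^{02}_\infty}$ for $x\in B$ by splitting $f_{\rm far}$ into a coarse part ($n<j_0$) and an off-centered fine part ($n\ge j_0$ with $Q_\eta\not\subset Q_{\xi_0}$). For the coarse part I apply the needlet Lipschitz estimate \eqref{local-Lip-1}, which supplies the factor $\rho(x,y)b^n\le b^{n-j_0}$, together with the trivial bound $|h_\eta|\le c|Q_\eta|^{1/2}\|f\|_{\cF^{02}_\infty}$ from \eqref{def-f-space-infty5}. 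For the off-centered fine part I apply \eqref{local-needlet-0} and Cauchy--Schwarz on dyadic shells around $\xi_0$, using the uniform control of $|Q_\xi|^{-1}\sum_{Q_\eta\subset Q_\xi}|h_\eta|^2$; this is effectively the almost-diagonal machinery of \thmref{thm:almost_diag}.

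For the VMO/$\sepTL{\cF}{0}{2}$ part, \propref{prop_BMO}(b) characterizes VMO by the vanishing of $|B|^{-1}\int_B|f-\avg_B f|^2$ as $\mathrm{rad}(B)\to 0$, while Definition~\ref{def:VF-infty} together with \eqref{def-f-space-infty2} characterizes $\sepTL{\cF}{0}{2}$ by $\lim_{j\to\infty}\sup_{\xi\in\cX_j}|Q_\xi|^{-1}\sum_{Q_\eta\subset Q_\xi}|h_\eta|^2=0$. Tracking the scale parameter through the two-sided estimates above yields equivalence of these vanishing conditions; alternatively, both spaces are the $\BMO$-closure of $C^\infty(\SS)$ by \propref{prop:BMO}(b) and \propref{prop:TL-infty}(b), so the statement follows at once from $\|f\|_{\BMO}\sim\|f\|_{\cF^{02}_\infty}$ for test functions. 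The main obstacle is the pointwise oscillation bound for $f_{\rm far}$ in the $\cF^{02}_\infty\subset\BMO$ direction: the coefficients $h_\eta$ are controlled only in an $\ell^2$-averaged sense on each $Q_\xi$, so pointwise estimates must be extracted by a telescoping Cauchy--Schwarz argument along the nested tree, exactly the role played by the almost-diagonal operators of the previous section.
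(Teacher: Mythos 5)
Your two directions and the VMO part follow the paper's blueprint closely: for $\BMO\subset\cF^{02}_\infty$ you use vanishing moments of $\psi_\eta$, the tight-frame Parseval identity on the local piece, and needlet decay plus the telescoping annular BMO estimate on the tail, exactly as in the paper's estimates \eqref{f-avg-B}--\eqref{f-BMO}; the VMO identification via \propref{prop:BMO}(b) and \propref{prop:TL-infty}(b) is verbatim the paper's final step. The $\cF^{02}_\infty\subset\BMO$ direction is also conceptually the paper's $F_1+F_2+F_3$ argument (Parseval on the local part, decay on the off-center fine part, Lipschitz estimate \eqref{local-Lip-1} on the coarse part).

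However, your near/far split in the converse direction has a genuine gap. You set $f_{\rm near}=\sum_{Q_\eta\subset Q_{\xi_0}}h_\eta\psi_\eta$ for a \emph{single} $\xi_0\in\cX_{j_0}$ with $B\subset C_0Q_{\xi_0}$, and consign everything else to $f_{\rm far}$. But for $n\ge j_0$ a cube $Q_\eta$ with $Q_\eta\not\subset Q_{\xi_0}$ can lie in a sibling $Q_\zeta$, $\zeta\in\cX_{j_0}$, adjacent to $Q_{\xi_0}$ and hence arbitrarily close to (or intersecting) $B$, since $B$ need not be contained in $Q_{\xi_0}$ itself. For such $\eta$ the quantity $|h_\eta|\,|\psi_\eta(x)|$ for $x\in B$ is $\lesssim\|f\|_{\cF^{02}_\infty}$ with no geometric gain in $n$, and there are such $\eta$'s at every fine scale; neither the diagonal bound $|h_\eta|\le c|Q_\eta|^{1/2}\|f\|_{\cF^{02}_\infty}$ nor Cauchy--Schwarz on shells produces a convergent sum, because $\|\psi_\eta\|_\infty\sim|Q_\eta|^{-1/2}$ blows up. The paper avoids this precisely by defining $\cU_n:=\{\xi\in\cX_n: Q_\xi\cap bB\neq\emptyset\}$, which is a \emph{bounded} collection of level-$n$ cubes (so Parseval still yields $\frac{1}{|B|}\int_B|F_1|^2\le c\|f\|^2$), and by reserving the $F_2$ estimate for $\xi\notin\cU_n$, for which $\rho(x,\eta)\gtrsim r$ is guaranteed. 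Your argument works once you enlarge $f_{\rm near}$ in the same way. A further minor point: the paper's $F_2$ and $F_3$ bounds use only the diagonal coefficient bound $|h_\eta|\le c|Q_\eta|^{1/2}\|f\|_{\cF^{02}_\infty}$ together with the needlet decay; no ``telescoping Cauchy--Schwarz along the nested tree'' or almost-diagonal operator machinery is needed here, contrary to your closing remark.
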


\begin{proof}
(a)
Let $f\in \BMO(\SS)$.
We next show that $f\in\cF_\infty^{02}(\SS)$ and
\begin{equation}\label{est-F-BMO}
\|f\|_{\cF_\infty^{02}(\SS)} \le c\|f\|_{\BMO}.
\end{equation}

To simplify our notation set $\gamma:=\bar{\gamma}/2$ and
\begin{equation*}
R_\xi:= \frac{b}{b-1}\gamma b^{-n},
\quad\hbox{for}\quad \xi\in\cX_n,~n\in\NN,
\end{equation*}
see Theorem~\ref{nested_sets} and \remref{rem:3}.

First,
let $\xi\in \cX_n$, $n\ge 1$.
Using that $\bar{\gamma}\le 1$ and $b>3$ we get $R_\xi \le \pi/b$.
We also introduce the abbreviated notation
$B_\xi:= B(\xi, bR_\xi)$.
By \eqref{dyadic_eq:4} in Theorem~\ref{nested_sets}, \eqref{sph_cap}, and \eqref{dyadic_eq:6} we have
\begin{equation}\label{Q-B}
Q_\xi\subset B(\xi, R_\xi) \subset B_\xi
\quad\hbox{and} \quad
|Q_\xi|\sim |B(\xi, R_\xi)| \sim |B_\xi|\sim b^{-n(d-1)}.
\end{equation}
The fact that $\{\psi_\xi\}$ is a tight frame (see \cite[Theorem 5.2]{NPW1}) implies
\begin{align}\label{f-avg-B}
\|(f-\avg_{B_\xi}f)\ONE_{B_\xi}\|_{L^2}^2
&= \sum_{\eta\in\cX}|\langle (f-\avg_{B_\xi}f)\ONE_{B_\xi}, \psi_\eta\rangle|^2
\\
&\ge \sum_{Q_\eta\subset Q_\xi}|\langle (f-\avg_{B_\xi}f)\ONE_{B_\xi}, \psi_\eta\rangle|^2.\nonumber
\end{align}

Let $Q_\eta\subset Q_\xi$. Hence $\eta\in\cX_{n+m}$ for some $m\ge 0$.
Using the fact that $\int_\SS\psi_\eta(x)d\sigma(x)=0$
we have
\begin{align}\label{f-psi}
|\langle f, \psi_\eta\rangle|^2
&= |\langle f-\avg_{B_\xi}f, \psi_\eta\rangle|^2\nonumber
\\
&\le 2|\langle (f-\avg_{B_\xi}f)\ONE_{B_\xi}, \psi_\eta\rangle|^2 + 2|\langle (f-\avg_{B_\xi}f)\ONE_{\SS\setminus B_\xi}, \psi_\eta\rangle|^2
\\
&\le 2|\langle (f-\avg_{B_\xi}f)\ONE_{B_\xi}, \psi_\eta\rangle|^2 \nonumber
\\
&\qquad\qquad\qquad+ 2\Big(\int_{\SS\setminus B_\xi}|f(x)-\avg_{B_\xi}f||\psi_\eta(x)|d\sigma(x)\Big)^2. \nonumber
\end{align}
To estimate the integral above we shall use that for any $M>d-1$
\begin{equation}\label{local-psi}
|\psi_\eta(x)| \le cb^{(n+m)(d-1)/2}(1+b^{n+m}\rho(x, \eta))^{-M},
\quad \eta\in \cX_{n+m},
\end{equation}
which is \eqref{local-needlet-0}.
We use \eqref{local-psi} and \eqref{Q-B} to obtain
\begin{align}\label{f-avg}
\int_{\SS\setminus B_\xi}&|f(x)-\avg_{B_\xi}f||\psi_\eta(x)|d\sigma(x) \nonumber
\\
&
= \sum_{\nu\ge 1} \int_{B(\xi, b^{\nu+1}R_\xi)\setminus B(\xi, b^{\nu}R_\xi)}|f(x)-\avg_{B_\xi}f||\psi_\eta(x)|d\sigma(x)
\\
&
\le c\sum_{\nu\ge 1} \frac{b^{(n+m)(d-1)/2}}{b^{(m+\nu)M}}\int_{B(\xi, b^{\nu+1}R_\xi)\setminus B(\xi, b^{\nu}R_\xi)}|f(x)-\avg_{B_\xi}f|d\sigma(x). \nonumber
\end{align}
Here we used that for $x\in B(\xi, b^{\nu+1}R_\xi)\setminus B(\xi, b^{\nu}R_\xi)$ and $\eta\in Q_\eta\subset Q_\xi$ we have
$$
b^{n+m}\rho(x, \eta) \ge cb^{n+m+\nu} R_\xi \ge cb^{m+\nu}.
$$
Clearly,
\begin{multline}\label{f-avg-2}
\int_{B(\xi, b^{\nu+1}R_\xi)\setminus B(\xi, b^{\nu}R_\xi)}|f(x)-\avg_{B_\xi}f|d\sigma(x)
\\
\le \int_{B(\xi, b^{\nu+1}R_\xi)}\big(|f(x)-\avg_{B(\xi, b^{\nu+1}R_\xi)}f|
\\
+ |\avg_{B(\xi, b^{\nu+1}R_\xi)}f-\avg_{B(\xi, bR_\xi)}f|\big)d\sigma(x).
\end{multline}
Further,
\begin{align}\label{f-avg-3}
\int_{B(\xi, b^{\nu+1}R_\xi)}|f(x)-\avg_{B(\xi, b^{\nu+1}R_\xi)}f| d\sigma(x)
&\le |B(\xi, b^{\nu+1}R_\xi)| \|f\|_{\BMO}
\\
&\le cb^{(\nu-n)(d-1)} \|f\|_{\BMO}. \nonumber
\end{align}
Also, for any $R>0$
\begin{align*}
|\avg_{B(\xi, bR)}f-\avg_{B(\xi, R)}f|
&\le \frac{1}{|B(\xi, R)|} \int_{B(\xi, R)}|f(x)-\avg_{B(\xi, bR)}f| d\sigma (x)
\\
& \le \frac{c}{|B(\xi, bR)|} \int_{B(\xi, bR)}|f(x)-\avg_{B(\xi, bR)}f| d\sigma (x)
\\
&\le c\|f\|_{\BMO}
\end{align*}
and using a telescopic sum we obtain
\begin{align*}
|\avg_{B(\xi, b^{\nu+1}R_\xi)}f-\avg_{B(\xi, bR_\xi)}f|
\le c\nu \|f\|_{\BMO}.
\end{align*}
Hence,
\begin{align*}
\int_{B(\xi, b^{\nu+1}R_\xi)}|\avg_{B(\xi, b^{\nu+1}R_\xi)}f-\avg_{B(\xi, bR_\xi)}f|d\sigma(x)
\le c\nu b^{(\nu-n)(d-1)} \|f\|_{\BMO}.
\end{align*}
This combined with \eqref{f-avg-2} and \eqref{f-avg-3} implies
\begin{align*}
\int_{B(\xi, b^{\nu+1}R_\xi)\setminus B(\xi, b^{\nu}R_\xi)}|f(x)-\avg_{B_\xi}f|d\sigma(x)
\le c\nu b^{(\nu-n)(d-1)} \|f\|_{\BMO}.
\end{align*}
Using this in \eqref{f-avg} we obtain
\begin{align*}
\int_{\SS\setminus B_\xi}&|f(x)-\avg_{B_\xi}f||\psi_\eta(x)|d\sigma(x)
\\
&\le c\sum_{\nu\ge 1} \frac{b^{(n+m)(d-1)/2}}{b^{(m+\nu)M}}\nu b^{(\nu-n)(d-1)} \|f\|_{\BMO}
\\
&\le cb^{-m(M-\frac{d-1}{2})}b^{-n(d-1)/2}\|f\|_{\BMO}\sum_{\nu\ge 1}\nu b^{-\nu(M-d+1)}
\\
&\le cb^{-m(M-\frac{d-1}{2})}|Q_\xi|^{1/2}\|f\|_{\BMO}.
\end{align*}
This and \eqref{f-psi} yield
\begin{align*}
|\langle f, \psi_\eta\rangle|^2
\le 2|\langle (f-\avg_{B_\xi}f)\ONE_{B_\xi}, \psi_\eta\rangle|^2
+ cb^{-2m(M-\frac{d-1}{2})}|Q_\xi|\|f\|_{\BMO}^2.
\end{align*}
Therefore,
\begin{align}\label{f-BMO}
&\frac{1}{|Q_\xi|}\sum_{Q_\eta\subset Q_\xi} |\langle f, \psi_\eta \rangle|^2
\le \frac{c}{|B_\xi|}\sum_{Q_\eta\subset Q_\xi} |\langle (f-\avg_{B_\xi}f)\ONE_{B_\xi}, \psi_\eta\rangle|^2 \nonumber
\\
& \hspace{2in}+ c\sum_{m\ge 0}\sum_{Q_\eta\subset Q_\xi, \eta\in\cX_{n+m}}b^{-2m(M-\frac{d-1}{2})}\|f\|_{\BMO}^2
\\
&\le \frac{c}{|B_\xi|}\|f-\avg_{B_\xi}f\|_{L^2(B_\xi)}^2
+ c\sum_{m\ge 0}b^{m(d-1)}b^{-2m(M-\frac{d-1}{2})}\|f\|_{\BMO}^2
\le c \|f\|_{\BMO}^2.\nonumber
\end{align}
Above we used \eqref{f-avg-B}, \eqref{number_of_children} and that $M>d-1$.

Second, let $\xi$ be the only element of $\cX_0$. Then $\langle f, \psi_\xi \rangle=|\SS|\avg_{\SS}f$ and \eqref{f-BMO} imply
\begin{equation*}
\frac{1}{|Q_\xi|}\sum_{Q_\eta\subset Q_\xi} |\langle f, \psi_\eta \rangle|^2
=\frac{1}{|Q_\xi|}|\langle f, \psi_\xi \rangle|^2
+\sum_{\zeta\in\cX_1}\frac{|Q_\zeta|}{|Q_\xi|}\frac{1}{|Q_\zeta|}\sum_{Q_\eta\subset Q_\zeta} |\langle f, \psi_\eta \rangle|^2
\le c \|f\|_{\BMO}^2.
\end{equation*}
This, \eqref{f-BMO} and \eqref{rep-F} imply \eqref{est-F-BMO}.

\smallskip

(b) Assume $f\in\cF_\infty^{02}(\SS)$.
We will show that $f\in \BMO(\SS)$ and
\begin{equation}\label{est-BMO-F}
\|f\|_{\BMO} \le c\|f\|_{\cF_\infty^{02}(\SS)}.
\end{equation}
Let $B:=B(y, R)$, $y\in\SS$, $0<R\le \pi$, be a spherical cap on $\SS$.
Choose $n\in\NN$ so that $\pi b^{-n}<R\le \pi b^{-n+1}$.
Denote
\begin{equation*}
\cU_n:=\{\xi\in\cX_n: Q_\xi\cap bB\ne\emptyset\}
\quad\hbox{and}\quad
\cY_n:= \cup_{j\ge n}\cX_j.
\end{equation*}
From \eqref{dyadic_eq:4}, \eqref{sph_cap}, and \eqref{dyadic_eq:6} it follows that
$\# \cU_n\le c$ with a constant $c>0$ independent of $n$.

Consider the following representation of $f$:
\begin{align*}
f &=\sum_{\xi\in \cU_n}\sum_{Q_\eta\subset Q_\xi}\langle f, \psi_\eta\rangle \psi_\eta
+ \sum_{\xi\in \cX_n\backslash\cU_n}\sum_{Q_\eta\subset Q_\xi}\langle f, \psi_\eta\rangle \psi_\eta
+ \sum_{\eta\in \cX\setminus \cY_n}\langle f, \psi_\eta\rangle \psi_\eta
\\
&=: F_1+ F_2 +F_3,
\end{align*}
where the convergence is in $L^2(\SS)$.
In the following we shall show that $F_1\in L^2(B)$ and $F_2\in L^\infty(B)$.
Of course, $F_3\in C^\infty(B)$ because $F_3$ is a spherical polynomial.

We first estimate $\frac{1}{|B|}\int_B |F_1(x)|^2d\sigma(x)$.
For this we need the following

\begin{lem}\label{lem:L2-norm}
If $\{a_\eta\}\in \ell^2(\cX)$, then
\begin{equation}\label{est-L2}
\Big\|\sum_{\eta\in\cX} a_\eta\psi_\eta\Big\|_2
\le c\Big(\sum_{\eta\in\cX} |a_\eta|^2\Big)^{1/2}.
\end{equation}
\end{lem}

Inequality \eqref{est-L2} follows immediately from the boundedness of the synthesis operator
$T_\psi: \ff_2^{02} \to \cF_2^{02}$ established in \thmref{thm:F-Bnorm-equiv}~(b)
and the identifications $\ff_2^{02}=\ell^2$ and $\cF_2^{02}(\SS)=L^2(\SS)$ (see \cite[Proposition 4.3]{NPW2}) with equivalent norms.

In light of Lemma~\ref{lem:L2-norm}, \eqref{dyadic_eq:6}, and \eqref{rep-F} we have
\begin{align}\label{F1}
\frac{1}{|B|}\int_B |F_1(x)|^2d\sigma(x)
&\le \frac{1}{|B|}\int_\SS |F_1(x)|^2d\sigma(x)
\le \frac{c}{|B|}\sum_{\xi\in\cU_n}\sum_{Q_\eta\subset Q_\xi}|\langle f, \psi_\eta\rangle|^2 \nonumber
\\
&\le c \sum_{\xi\in\cU_n}\frac{1}{|Q_\xi|}\sum_{Q_\eta\subset Q_\xi}|\langle f, \psi_\eta\rangle|^2
\le c \|f\|_{\cF^{02}_\infty}^2.
\end{align}

We now estimate $|F_2(x)|$ for $x\in B$.
For this we shall use that
\begin{equation*}
|\langle f, \psi_\eta\rangle|^2 \le c|Q_\eta|\|f\|_{\cF^{02}_\infty}^2
\quad\hbox{and}\quad
|\psi_\eta(x)| \le \frac{c|Q_\eta|^{-1/2}}{(1+b^j\rho(x, \eta))^M},\quad \eta\in \cX_j,
\end{equation*}
which follow from \eqref{rep-F}, \eqref{local-needlet-0}, and \eqref{dyadic_eq:6}.
We choose $M=d$ and get
\begin{align*}
|F_2(x)| &\le \sum_{\xi\in \cX_n\backslash\cU_n}\sum_{Q_\eta\subset Q_\xi}|\langle f, \psi_\eta\rangle| |\psi_\eta(x)|
\\
&\le c \|f\|_{\cF^{02}_\infty}\sum_{j\ge n}\sum_{\substack{\eta\in\cX_j\\ Q_\eta\cap bB=\emptyset}}\frac{1}{(1+b^j\rho(x, \eta))^M}.
\end{align*}
Using $\rho(x,\zeta)\ge (b-1)R$ for $x\in B$ and $\zeta\in Q_\eta$ with $Q_\eta\cap bB=\emptyset$, \eqref{dyadic_eq:6},
and \eqref{eq:conv_2} with $N=b^j$ and $r=\pi (b-1)b^{-n}$ we have for the last sum above
\begin{multline*}
\sum_{\substack{\eta\in\cX_j\\ Q_\eta\cap bB=\emptyset}}\frac{1}{(1+b^j\rho(x, \eta))^M}
\le c\sum_{\substack{\eta\in\cX_j\\ Q_\eta\cap bB=\emptyset}}\frac{1}{|Q_\eta|}\int_{Q_\eta}\frac{1}{(1+b^j\rho(x, \zeta))^M}d\sigma(\zeta)\\
\le c\int_{\SS\setminus (b-1)B} \frac{b^{j(d-1)}}{(1+b^j\rho(x, \zeta))^M} d\sigma(\zeta)
\le c b^{(n-j)(M-d+1)}
\end{multline*}
and hence
\begin{equation}\label{F2}
|F_2(x)| \le c\|f\|_{\cF^{02}_\infty},\quad x\in B.
\end{equation}

We finally estimate $|F_3(x)-F_3(y)|$ for $x\in B$ using that
$|\langle f, \psi_\eta\rangle|^2 \le |Q_\eta|\|f\|_{\cF^{02}_\infty}^2$
and
\begin{equation*}
|\psi_\eta(x)-\psi_\eta(y)| \le \frac{c|Q_\eta|^{-1/2}b^j\rho(x, y)}{(1+b^j\rho(x, \eta))^M},
\;\; \eta\in \cX_j,
\quad \rho(x, y)\le \pi b^{-j}, \;\; M>d-1,
\end{equation*}
which follows from \eqref{local-Lip-1} in \thmref{thm:localization} with $\Lambda_{b^j}=\Psi_j$, \eqref{def-psi-xi}, and \eqref{cubature_w}.
We obtain for $\rho(x,y)\le R\le \pi b^{-n}$
\begin{align}\label{F3}
|F_3(x)-F_3(y)| &\le \sum_{\eta\in \cX\setminus \cY_n}|\langle f, \psi_\eta\rangle||\psi_\eta(x)-\psi_\eta(y)| \nonumber
\\
&\le c\|f\|_{\cF^{02}_\infty}\sum_{j<n}\sum_{\eta\in\cX_j}\frac{b^j\rho(x, y)}{(1+b^j\rho(x, \eta))^M}
\\
&\le c\|f\|_{\cF^{02}_\infty}\sum_{j<n}b^{j-n}\sum_{\eta\in\cX_j}\frac{1}{(1+b^j\rho(x, \eta))^M}
\le c\|f\|_{\cF^{02}_\infty}, \nonumber
\end{align}
where \lemref{lem:Besov_1} with $m=0$ is used for the last inequality.

We use \eqref{F1}--\eqref{F3} to obtain
\begin{align*}
&\bigg(\frac{1}{|B|}\int_B |f(x)-F_3(y)|^2d\sigma(x)\bigg)^{1/2}
\le \bigg(\frac{1}{|B|}\int_B |F_1(x)|^2d\sigma(x)\bigg)^{1/2}
\\
&\qquad + \bigg(\frac{1}{|B|}\int_B |F_2(x)|^2d\sigma(x)\bigg)^{1/2}
+ \bigg(\frac{1}{|B|}\int_B |F_3(x)-F_3(y)|^2d\sigma(x)\bigg)^{1/2}
\le c\|f\|_{\cF^{02}_\infty}.
\end{align*}
This readily implies that
\begin{equation*}
\bigg(\frac{1}{|B|}\int_B |f(x)-\avg_B f|^2d\sigma(x)\bigg)^{1/2}
\le 2\bigg(\frac{1}{|B|}\int_B |f(x)-F_3(y)|^2d\sigma(x)\bigg)^{1/2} \le c\|f\|_{\cF^{02}_\infty},
\end{equation*}
which together with $|\avg_{\SS} f|^2 \le |\SS|\|f\|_{\cF^{02}_\infty}^2$
yields \eqref{est-BMO-F}.

\smallskip

The identification $\sepTL{\cF}{0}{2}(\SS)=\VMO(\SS)$ follows
from \eqref{equiv-BMO-F}, \propref{prop:BMO}~(b), and \propref{prop:TL-infty}~(b).
The proof of Theorem~\ref{thm:BMO-F} is complete.
\end{proof}

\begin{rem}
The parameter $q$ plays different roles in the definitions of $\BMO(\SS)$ and Triebel-Lizorkin spaces with $p=\infty$.
All $q$'s, $0<q<\infty$, generate the same space $\BMO(\SS)$ according to \propref{prop_BMO}.
On the other hand $\cF^{sq_1}_\infty\ne\cF^{sq_2}_\infty$ if $q_1\ne q_2$.
\end{rem}

\section{Harmonic BMO space on the ball}\label{sec:harmonic-BMO}

Denote by $\HHH=\HHH(B^d)$ the set of all harmonic functions on the unit ball $B^d$ in $\RR^d$.

It is convenient to define the harmonic Besov and Triebel-Lizorkin spaces on $B^d$
by using their expansion in solid spherical harmonics.
As in Subsection~\ref{s1_3} let $\{Y_{\kk j}: j=1, \dots, N(\kk, d)\}$ be a real-valued orthonormal basis for $\cH_\kk$.
The harmonic coefficients of $U\in\HHH(B^d)$ are defined by
\begin{equation*}
\bbb_{\kk \nu}(U):= \frac{1}{a^\kk}\int_\SS U(a\eta)Y_{\kk \nu}(\eta) d\sigma(\eta)
\end{equation*}
for some $0<a<1$. It is an important observation that the coefficients
are independent of $a$ for all $0<a<1$.
This implies the representation
\begin{equation}\label{rep-U-3a}
U(r\xi) = \sum_{\kk=0}^\infty \sum_{\nu=1}^{N(\kk, d)} \bbb_{\kk \nu}(U)r^\kk Y_{\kk \nu}(\xi),
\quad 0\le r<1, ~\xi\in \SS,
\end{equation}
where the convergence is absolute and uniform on every compact subset of $B^d$.

In view of $(\ref{rep-U-3a})$ to any $U\in \HHH(B^d)$
with coefficients $\{\bbb_{\kk \nu}(U)\}$ with polynomial growth
there corresponds a boundary value distribution $f_U\in \cS'(\SS)$
defined by
\begin{equation}\label{rep-f-33}
f_U(\xi):= \sum_{\kk=0}^\infty \sum_{\nu=1}^{N(\kk, d)} \bbb_{\kk \nu}(U) Y_{\kk \nu}(\xi),\quad \xi\in\SS,
\quad \hbox{$($convergence in $\cS'$$)$}.
\end{equation}
Conversely, to any distribution $f\in \cS'(\SS)$ with coefficients $\bbb_{\kk \nu}(f):=\langle f, Y_{\kk \nu}\rangle$
there corresponds a harmonic function $U_f\in \HHH(B^d)$
$($the harmonic extension of $f$ to $B^d$$)$, defined by
\begin{equation}\label{rep-U-33}
U_f(x) = \sum_{\kk=0}^\infty \sum_{\nu=1}^{N(\kk, d)} \bbb_{\kk \nu}(f)|x|^\kk Y_{\kk \nu}\Big(\frac{x}{|x|}\Big)=\left\langle f, P(\cdot,x) \right\rangle,
\quad |x|<1,
\end{equation}
where the series converges uniformly on every compact subset of $B^d$.
Here $P(y,x)$ is the Poisson kernel, see \eqref{Poisson} and \eqref{Pk}.

In \cite{IP} we define and study the harmonic Besov spaces $B^{sq}_p(\HHH)$ ($s\in\RR$, $0<p,q\le \infty$),
Triebel-Lizorkin spaces $F^{sq}_p(\HHH)$ ($s\in\RR$, $0<q\le \infty$, $0<p<\infty$) and Hardy spaces $\HHH^p(B^d)$ ($0<p\le \infty$).
Some identification results from \cite{IP} are given in the following theorems.

\begin{thm}\label{thm:equiv-norms-F}
Let $s\in \RR$, $0<p < \infty$, $0<q\le \infty$.
A harmonic function $U\in F^{s q}_p(\HHH)$
if and only if its boundary value distribution $f_U$
defined by \eqref{rep-f-33} belongs to $\cF^{sq}_p(\SS)$,
moreover,
$\|U\|_{F^{s q}_p(\HHH)} \sim \|f_U\|_{\cF^{s q}_p(\SS)}$.
Also, a distribution $f\in \cF^{sq}_p(\SS)$ if and only if its harmonic extension $U_f$ defined by \eqref{rep-U-33} belongs to $F^{s q}_p(\HHH)$,
moreover,
$\|f\|_{\cF^{s q}_p(\SS)} \sim \|U_f\|_{F^{s q}_p(\HHH)}$.
\end{thm}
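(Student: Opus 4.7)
The plan is to derive this identification directly from the definitions by exploiting the bijection between harmonic functions on $B^d$ with polynomially bounded spherical-harmonic coefficients and distributions on $\SS$, together with the fact that the Littlewood–Paley kernels $\Phi_j$ act as multipliers on the spherical-harmonic coefficients and therefore commute with the Poisson extension.

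First I would verify that the maps $U \mapsto f_U$ given by \eqref{rep-f-33} and $f \mapsto U_f$ given by \eqref{rep-U-33} are mutually inverse bijections between those $U \in \HHH(B^d)$ whose coefficients $\{b_{k\nu}(U)\}$ have at most polynomial growth in $k$ and distributions $f \in \cS'(\SS)$. The equivalence is immediate from the characterization of $\cS'(\SS)$ via the semi-norms \eqref{test_norms}: a linear functional on $\cS$ extends continuously if and only if its spherical harmonic coefficients grow at most polynomially. Under this correspondence $b_{k\nu}(f_U) = b_{k\nu}(U)$.

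Second, for any admissible $\varphi$ as in Definition~\ref{def:B-F-spaces}, the sphere Littlewood–Paley piece
\[
\Phi_j * f_U(\xi) = \sum_{k=0}^\infty \varphi(k/b^{j-1}) \sum_{\nu=1}^{N(k,d)} b_{k\nu}(U) Y_{k\nu}(\xi)
\]
is a spherical polynomial of degree $\le b^j$, and its unique harmonic extension to $B^d$ is
\[
U^{(j)}(r\xi) := \sum_{k=0}^\infty \varphi(k/b^{j-1}) \sum_{\nu=1}^{N(k,d)} b_{k\nu}(U)\, r^k Y_{k\nu}(\xi),
\]
which is precisely the building block used in \cite{IP} to define $\|U\|_{F^{sq}_p(\HHH)}$. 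Thus the ball and sphere Littlewood–Paley decompositions agree under the correspondence $U \leftrightarrow f_U$, piece by piece. For each fixed $j$, $U^{(j)}$ is a harmonic polynomial of degree at most $b^j$, so its behaviour on $B^d$ is completely encoded by its boundary value on $\SS$; by the maximum principle and standard Bernstein-type estimates for harmonic polynomials of degree $\le N$, the various reasonable norms (boundary $L^p(\SS)$ norm, non-tangential maximal function, $L^p$ averages over thin annuli near $\SS$) are all equivalent up to constants depending only on $p$, $d$.

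Third, combining the two previous steps, the $\ell^q$-sum
$\bigl(\sum_j (b^{sj} |U^{(j)}|)^q\bigr)^{1/q}$
used in the ball norm reduces, after evaluation on $\SS$, to
$\bigl(\sum_j (b^{sj} |\Phi_j * f_U|)^q\bigr)^{1/q}$,
and taking $L^p(\SS)$ norms gives $\|U\|_{F^{sq}_p(\HHH)} \sim \|f_U\|_{\cF^{sq}_p(\SS)}$. The second assertion about $f \mapsto U_f$ follows by applying the first direction to $U := U_f$ and noting $f_{U_f} = f$. The main obstacle is the precise matching with the definition of $F^{sq}_p(\HHH)$ used in \cite{IP}: if that definition is phrased via integrals over $B^d$ rather than boundary traces, one must carefully use the Bernstein-type comparison on annuli $\{1 - cb^{-j} \le |x| < 1\}$, where the radial factors $r^k$ with $k \sim b^j$ are bounded away from $0$ and $1$, to transfer the norm from the ball to the sphere.
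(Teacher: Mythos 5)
The paper does not present a proof of this theorem; it records it as an identification result imported from \cite{IP} (K.~G.~Ivanov, P.~Petrushev, \emph{Harmonic Besov and Triebel--Lizorkin spaces on the ball}, J.~Fourier Anal.\ Appl.\ 23 (2017)). There is therefore no in-paper proof to compare your proposal against.

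Assessed on its own terms, your proposal correctly identifies the right ingredients: the bijection $U\leftrightarrow f_U$ through polynomially bounded spherical-harmonic coefficients, the fact that $\Phi_j$ acts as a coefficient multiplier so that $\Phi_j*f_U$ is the boundary trace of the harmonic polynomial $U^{(j)}$, and the need to transfer norms from the ball to the sphere through Bernstein-type estimates on annuli $\{1-cb^{-j}\le |x|<1\}$. However, the argument remains a sketch at precisely the point where the difficulty lies. Because the Triebel--Lizorkin norm puts the $\ell^q$-sum over $j$ \emph{inside} the $L^p$ integral, you cannot simply compare piece-by-piece: if the ball norm in \cite{IP} evaluates the LP blocks away from $\SS$ (at radii $r_j<1$, via annular or tent-type integrals, or through radial derivatives), passing to boundary traces requires a vector-valued (Peetre/Fefferman--Stein type) maximal estimate applied simultaneously to the whole family $\{U^{(j)}\}_j$, with constants uniform in $j$; for $p\le 1$ this is quasi-Banach territory where such estimates are genuinely delicate and the maximum principle alone will not carry you. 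Likewise, part of the theorem is that $U\in F^{sq}_p(\HHH)$ forces polynomially bounded coefficients so that $f_U$ is even well defined in $\cS'(\SS)$ — your first step simply assumes this. Invoking ``standard Bernstein-type estimates'' at these two junctures leaves exactly the content of the theorem unproved.
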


\begin{thm}\label{thm:equiv-norms-B}
Let $s\in \RR$, $0<p, q\le \infty$.
A harmonic function $U\in B^{s q}_p(\HHH)$
if and only if its boundary value distribution $f_U$
defined by \eqref{rep-f-33} belongs to $\cB^{sq}_p(\SS)$,
moreover,
$\|U\|_{B^{s q}_p(\HHH)} \sim \|f_U\|_{\cB^{s q}_p(\SS)}$.
Also, a distribution $f\in \cB^{sq}_p(\SS)$ if and only if its harmonic extension $U_f$ defined by \eqref{rep-U-33} belongs to $B^{s q}_p(\HHH)$,
moreover,
$\|f\|_{\cB^{s q}_p(\SS)} \sim \|U_f\|_{B^{s q}_p(\HHH)}$.
\end{thm}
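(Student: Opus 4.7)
The plan is to reduce the claim to the $F$-space analogue, \thmref{thm:equiv-norms-F}, together with the single structural fact that under the bijection \eqref{rep-U-33}--\eqref{rep-f-33} the spherical-harmonic coefficients match: $\bbb_{\kk\nu}(U_f)=\bbb_{\kk\nu}(f)$ and $\bbb_{\kk\nu}(f_U)=\bbb_{\kk\nu}(U)$. Consequently the Littlewood-Paley projections commute with harmonic extension: the harmonic extension of $\Phi_j*f_U$ is precisely the solid-harmonic function whose coefficients are $\varphi(\kk/b^{j-1})\,\bbb_{\kk\nu}(U)$. This is the only analytic link between the sphere and the ball needed, and it is a direct consequence of the Poisson kernel acting as a spherical-harmonic multiplier by $r^{\kk}$, cf.~\eqref{Poisson}.

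First I would recall from \cite{IP} the definition of $B^{sq}_p(\HHH)$, whose $j$-th Littlewood-Paley building block, evaluated on a harmonic function with coefficients $\{\bbb_{\kk\nu}\}$, is -- up to a universal constant -- the $L^p(\SS)$-norm of $\sum_{\kk,\nu}\varphi(\kk/b^{j-1})\bbb_{\kk\nu}Y_{\kk\nu}$. The commutation identity above then yields
\begin{equation*}
\|\,j\text{-th piece of } U\,\|_{L^p(\SS)} \sim \|\Phi_j*f_U\|_{L^p(\SS)}, \qquad j\ge 0,
\end{equation*}
for every $U\in\HHH(B^d)$ whose coefficients have polynomial growth. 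Taking the weighted $\ell^q$-sum in $j$ as in \eqref{B-norm} then gives $\|U\|_{B^{sq}_p(\HHH)}\sim \|f_U\|_{\cB^{sq}_p(\SS)}$. Running the same argument in reverse -- starting from $f\in\cB^{sq}_p(\SS)\subset\cS'$, forming $U_f$ via \eqref{rep-U-33}, and applying the commutation identity -- yields the matching estimate $\|U_f\|_{B^{sq}_p(\HHH)}\sim \|f\|_{\cB^{sq}_p(\SS)}$. In the $q=\infty$ case one replaces the $\ell^q$-sum by the supremum with no change of substance, and in the $p=\infty$ case the only modification is the passage from the $L^p$-norm to the $L^\infty$-norm on $\SS$, which commutes with the spherical multiplier $\Phi_j$ in the same way.

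Two technical points require attention. One is that for $U\in B^{sq}_p(\HHH)$ the distribution $f_U$ from \eqref{rep-f-33} is genuinely well defined in $\cS'$: this follows because norm control on the Littlewood-Paley pieces of $U$ forces $\{\bbb_{\kk\nu}(U)\}$ to grow at most polynomially in $\kk$, so that the pairing against $\phi\in\cS$ is controlled by a finite seminorm $P_m(\phi)$, cf.~\eqref{def-distr} and \eqref{test_norms}. The second, which I expect to be the main obstacle, is establishing the commutation identity in precisely the form dictated by the definition of $B^{sq}_p(\HHH)$ used in \cite{IP}: if the harmonic Besov norm there is framed in terms of radial derivatives or dilations $U(r\,\cdot)$ rather than in terms of spherical-harmonic multipliers, one must first show that these alternative Littlewood-Paley characterizations are equivalent. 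This is a standard exchange-of-orders argument, exactly analogous to the identification of classical $\RR^d$ Besov spaces defined via the Poisson semigroup with those defined via frequency projections, but it is the single step that genuinely uses ball-specific information rather than the spherical harmonic bookkeeping.
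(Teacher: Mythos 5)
This theorem is not proved in the present paper: it is recalled from \cite{IP}, where the harmonic Besov spaces $B^{sq}_p(\HHH)$ are defined and the identification with $\cB^{sq}_p(\SS)$ is established. There is therefore no in-paper argument to compare you against; what I can do is evaluate your sketch as a self-contained attempt.

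Your high-level reduction is the right one: under \eqref{rep-f-33}--\eqref{rep-U-33} the spherical harmonic coefficients of $U$ and $f_U$ coincide, the Poisson extension is a spherical-harmonic multiplier by $r^{\kk}$, and $\Phi_j$ is a spherical-harmonic multiplier by $\varphi(\kk/b^{j-1})$, so these operations commute. You also correctly flag the one place where the real work lies -- the equivalence between the intrinsic ball-side definition of $B^{sq}_p(\HHH)$ (in \cite{IP} it is in terms of weighted integrals of radial derivatives $\partial_r^m U(r\cdot)$ over $r\in[0,1)$, not in terms of $\Phi_j*f_U$) and the spherical-multiplier characterization. But flagging the hurdle is not the same as clearing it, and everything interesting in the theorem is concentrated there. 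Concretely, you need to show that for a fixed $m > s$,
\begin{equation*}
\left(\int_0^1 (1-r)^{(m-s)q}\,\big\|\partial_r^m U(r\cdot)\big\|_{L^p(\SS)}^q\,\frac{dr}{1-r}\right)^{1/q}
\;\sim\;
\left(\sum_{j\ge 0} \big(b^{js}\|\Phi_j*f_U\|_{L^p(\SS)}\big)^q\right)^{1/q},
\end{equation*}
(with the usual modifications for $p,q=\infty$). The direction going from the dyadic sum to the integral splits the interval $1-b^{-j-1}\le 1-r\le 1-b^{-j}$, uses $\partial_r^m r^{\kk} = \kk(\kk-1)\cdots(\kk-m+1)r^{\kk-m}\sim \kk^m$ when $\kk\sim b^j$, together with the exponential decay $r^{\kk}\le c$ for $\kk\lesssim b^j$ and the fast decay of $r^{\kk}$ for $\kk\gg b^j$, to estimate $\|\partial_r^m U(r\cdot)\|_{L^p}$ by a weighted sum of $\|\Phi_n*f_U\|_{L^p}$ over nearby $n$; this needs a discrete Hardy-type inequality. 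The reverse direction requires bounding $\|\Phi_j*f_U\|_{L^p}$ by an integral of $\|\partial_r^m U(r\cdot)\|_{L^p}$ over a dyadic shell, using that the multiplier $\varphi(\kk/b^{j-1})/\big(\kk^m r_j^{\kk-m}\big)$ is bounded with bounded derivatives on the support of $\varphi$ for $r_j = 1-b^{-j}$, so that \thmref{thm:localization}-type bounds apply. None of this is in your sketch. You describe it as ``a standard exchange-of-orders argument,'' which is fair as a characterization of its difficulty, but as written the proposal reduces the theorem to a lemma and stops before proving the lemma, and that lemma is the whole substance of the statement.

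One minor additional point: you assert that $f_U$ is well defined in $\cS'$ because the Besov norm controls the polynomial growth of $\bbb_{\kk\nu}(U)$. That is true but again presupposes the ball-side characterization, so it is not really an independent check; once the norm equivalence above is in hand it comes for free.
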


We now introduce the harmonic BMO space $\BMOH$ on $B^d$.
We will use the following notation: For any spherical cap $B\subset \SS$ of radius $r$ we set
\begin{equation}\label{def:TB}
T(B):= \{x\in B^d: x=ty, y\in B, (1-r)_+\le t<1\}.
%\quad\hbox{}
\end{equation}

\begin{defn}\label{def:BMOH}
The space $\BMOH:= \BMOH(B^d)$ is defined as the set of all harmonic functions $U\in \HHH(B^d)$
such that
\begin{equation}\label{BMOH}
\|U\|_{\BMOH}:=|U(0)|+\sup_{B\subset \SS}\bigg(\frac{1}{|B|}\int_{T(B)}|\nabla U(x)|^2(1-|x|)dx\bigg)^{1/2} <\infty,
\end{equation}
where the $\sup$ is over all balls $B$ on $\SS$.

Furthermore, the harmonic $\VMO$ space $\VMOH:= \VMOH(B^d)$ is defined
as the set of all harmonic functions $U\in \BMOH(B^d)$ such that
\begin{equation}\label{VMOH}
\sup_{\substack{B=B(y,r)\\y\in\SS,~0<r\le \delta}}\bigg(\frac{1}{|B|}\int_{T(B)}|\nabla U(x)|^2(1-|x|)dx\bigg)^{1/2} \to 0
\quad\hbox{as}\quad \delta\to 0.
\end{equation}
\end{defn}

This definition is justified by the following

\begin{thm}\label{thm:BMOH}
A function $f\in \BMO(\SS)$ if and only if its harmonic extension $U_f\in \BMOH(B^d)$ $($see \eqref{rep-U-33}$)$ and
\begin{equation}\label{BMO-BMOH}
\|f\|_{\BMO} \sim \|U_f\|_{\BMOH}.
\end{equation}
Moreover, $f\in \VMO(\SS)$ if and only $U_f\in \VMOH(B^d)$.
\end{thm}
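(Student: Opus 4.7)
The plan is to bring \thmref{thm:BMOH} into the frame-and-Triebel-Lizorkin framework already in place. First, by \thmref{thm:BMO-F} we may identify $\BMO(\SS)$ with $\cF^{02}_\infty(\SS)$ and $\VMO(\SS)$ with $\sepTL{\cF}{0}{2}(\SS)$, both with equivalent quasi-norms. Hence the theorem reduces to establishing, for every $f\in\cS'(\SS)$,
\begin{equation*}
\|U_f\|_{\BMOH(B^d)}\sim \|f\|_{\cF^{02}_\infty(\SS)},
\end{equation*}
together with the vanishing-oscillation version obtained by restricting the supremum to caps of radius $\le\delta$ and letting $\delta\to 0$. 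The constant term $|U_f(0)|=\omega_d^{-1}|\langle f,1\rangle|$ matches the level-$0$ contribution already built into the $\cF^{02}_\infty$ norm, so the real content is a Carleson-type equivalence between the tent integral of $|\nabla U_f|^2(1-|x|)$ on the $B^d$ side and the Littlewood-Paley square function $\sum_n|\Phi_n*f|^2$ on the sphere.

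To produce that equivalence I would decompose $f=\sum_{n\ge 0}\Phi_n*f$ and, by uniqueness of harmonic extension, write $U_f=\sum_n U_{\Phi_n*f}$; each summand is a solid harmonic polynomial of degree $\lesssim b^n$. Using the localization estimates of \thmref{thm:localization} together with Markov-type derivative estimates for these polynomials (a consequence of their frequency localization at scale $b^n$), I would establish the two-sided annular comparison
\begin{equation*}
\int_{1-b^{-n}}^{1-b^{-n-1}}|\nabla U_f(r\xi)|^2(1-r)r^{d-1}dr\sim b^{-n}\bigl(\Lambda_n*|\Phi_n*f|\bigr)^2(\xi),
\end{equation*}
where $\Lambda_n$ is a rapidly decaying averaging kernel of order $b^{n(d-1)}(1+b^n\rho(\cdot,\cdot))^{-M}$; the off-diagonal tails produced by $\Lambda_n$ are absorbed by \lemref{lem:Besov_1}.

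Given a cap $B=B(y,r)\subset\SS$ and $j=\lfloor\log_b(\pi/r)\rfloor$, I would slice $T(B)$ by the annuli $\{1-b^{-n}\le|x|<1-b^{-n-1}\}$ for $n\ge j$, apply the annular comparison level by level, and integrate over $B$ in the angular variable. The weights combine so as to collapse the sum into
\begin{equation*}
\frac{1}{|B|}\int_{T(B)}|\nabla U_f(x)|^2(1-|x|)dx\sim \frac{1}{|B|}\int_{cB}\sum_{n\ge j}|\Phi_n*f(\eta)|^2 d\sigma(\eta),
\end{equation*}
with a harmless dilation $cB\supset B$ absorbed when taking the supremum over caps. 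Taking the sup yields the desired norm equivalence; restricting to caps of radius $\le\delta$ and applying \propref{prop:TL-infty}(b) together with \propref{prop:BMO}(b) gives the $\VMO/\VMOH$ identification.

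The hard part will be the lower bound in the annular comparison. The upper bound follows comfortably from the size and derivative estimates on $U_{\Phi_n*f}$, but the lower bound requires inverting the smoothing effect of $\Phi_n$ on the Poisson extension. I plan to handle this by choosing a dual Littlewood-Paley partition $\widetilde\Phi_m$ with $\sum_m\widetilde\Phi_m*\Phi_m=I$ and invoking the reproducing identity on a thin annulus $|x|\approx 1-b^{-n}$, thereby reducing the $L^2$ lower bound to a direct comparison between $|\Phi_n*f|$ and the radial and tangential derivatives of $U_f$ at the relevant scale. Once this comparison is in hand, summation in $n$ and integration in $r$ are routine.
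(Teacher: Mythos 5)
The paper does not actually prove \thmref{thm:BMOH}: it invokes the classical Fefferman--Stein theorem (Theorem~3, Section~II in \cite{FS} for $\RR^{n+1}_+$) and Garnett (Chapter~VI, Theorem~3.4 for $d=2$), remarking that the ball case $d\ge 3$ is not essentially different. Your proposal instead attempts a self-contained Littlewood--Paley argument through the identification $\BMO(\SS)=\cF^{02}_\infty(\SS)$ established in \thmref{thm:BMO-F}; this is a genuinely different route, and if it worked it would give the paper a more unified, frame-compatible proof.

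However, the central step as stated is false. The claimed two-sided pointwise annular comparison
\begin{equation*}
\int_{1-b^{-n}}^{1-b^{-n-1}}|\nabla U_f(r\xi)|^2(1-r)r^{d-1}\,dr\;\sim\; b^{-n}\bigl(\Lambda_n*|\Phi_n*f|\bigr)^2(\xi)
\end{equation*}
cannot hold for a single fixed $n$. On the annulus $|x|\approx 1-b^{-n}$ the gradient $\nabla U_f$ receives contributions from every spherical-harmonic block of $f$: writing $U_f=\sum_m U_{\Phi_m*f}$, the term of degree $\sim b^m$ contributes to $\nabla U_f(r\xi)$ with weight roughly $b^m r^{b^m}$, which decays in $|m-n|$ but does not vanish. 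A pointwise upper bound therefore must carry a sum over $m$ with off-diagonal decay, and the lower bound fails even more dramatically: nothing prevents $\Phi_n*f$ from being zero near $\xi$ while bands $m\ne n$ make the left-hand side nonzero. The equivalence can at best hold after averaging over a cap and summing over levels $n\ge j$, at which point the off-diagonal interactions have to be absorbed by an almost-diagonal argument in the spirit of \thmref{thm:almost_diag}. Your proposal neither states the corrected inequality with the $m$-sum nor explains how the off-diagonal cross-terms in $|\nabla U_f|^2$ are controlled at the Carleson-measure level, so as written the upper bound is incomplete and the lower bound is missing. You acknowledge the lower bound is the hard part, but the suggested fix via a dual partition $\widetilde\Phi_m$ and a ``reproducing identity on a thin annulus'' is too vague to assess: to recover $\Phi_n*f$ from the Poisson extension one typically needs a Calder\'on reproducing formula adapted to the Poisson kernel or an $H^1$--$\BMO$ duality argument, not a pointwise read-off, and the proposal does not supply the missing lemma. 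Until that step is made precise (or replaced by the duality argument used in \cite{FS}), the proof does not go through.
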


This theorem is classical by now in the theory of $H^1$ and $\BMO$ spaces.
It simply asserts that $f\in \BMO(\SS)$ if and only if
\begin{equation*}
d\mu_f:= |\nabla U_f(x)|^2(1-|x|)dx
\end{equation*}
is a Carleson measure on $B^d$ with norm $\|d\mu_f\|_{\cC}$ equivalent to $\|f\|_{\BMO}$.

In dimension $d=2$ the proof of Theorem~\ref{thm:BMOH} is contained in the proof of Theorem~3.4, Chapter VI, in \cite{Garnett}.
In the case of $\RR^{n+1}_+$ (instead of $B^d$) this is Theorem~3, Sec. II, in~\cite{FS}.
We do not present here a proof of \thmref{thm:BMOH} in the case $d\ge 3$,
since it does not differ essentially from the corresponding proof for $\RR^{n+1}_+$ in \cite{FS}.
We omit the further details.

\section{Nonlinear approximation in $\BMO$}\label{sec:approximation}

In this section we shall utilize the spaces and frames developed in previous sections to obtain our main results.
In particular, the machinery of highly localized frames for Besov spaces and $\BMO$
whose elements are linear combinations of a fixed number of shifts of the Newtonian kernel
will play a critical role.

\subsection{Main results on approximation from shifts of the Newtonian kernel}

We begin by introducing our approximation tool.
For any $n\ge 1$ define
\begin{equation}\label{def:N-n-d}
\NNN_n:=\Big\{G: G(x) =a_0+\sum_{\nu=1}^{n} \frac{a_\nu}{|x-y_\nu|^{d-2}}, \; |y_\nu|>1, a_\nu\in \CC\Big\},
\;\;\hbox{if $d>2$},
\end{equation}
and
\begin{equation}\label{def:N-n-2}
\NNN_n:=\Big\{G: G(x) =a_0+\sum_{\nu=1}^{n} a_\nu\ln \frac{1}{|x-y_\nu|}, \; |y_\nu|>1, a_\nu\in \CC\Big\},
\;\;\hbox{if $d=2$}.
\end{equation}
Observe that the points $\{y_\nu\}$ above may vary with $G$ and hence $\NNN_n$ is nonlinear.

Given $U\in \VMOH$ we define
\begin{equation}\label{def-En-H}
E_n(U)_{\BMOH}:=\inf_{G\in\NNN_n}\|U-G\|_{\BMOH},
\end{equation}
which we term
{\em best $n$-term approximation of $U$ from shifts of the Newtonian kernel in the harmonic $\BMO$ space}.

The approximation of harmonic functions $U$ on the ball $B^d\subset \RR^d$ is intimately connected
to the approximation of their boundary values $f_U$ on $\SS$.
For any $f \in \VMO(\SS)$ we set
\begin{equation*}
E_n(f)_{\BMO}:=\inf_{G\in\NNN_n}\|f-G|_{\SS}\|_{\BMO}.
\end{equation*}

Here we come to our main result. We adhere to the notation from the previous sections.

\begin{thm}[Jackson estimate]\label{thm:Jackson_VMOH}
If $U\in B^{s\tau}_{\tau}(\HHH)$, $1/\tau=s/(d-1)$, $s>0$,
then $U\in \VMOH(B^d)$ and for $n\ge 1$
\begin{equation}\label{jackson-VMOH}
E_n(U)_{\BMOH} \le cn^{-s/(d-1)}\|U\|_{B^{s\tau}_{\tau}(\HHH)},
\end{equation}
where the constant $c>0$ depends only on $s$, $d$.
\end{thm}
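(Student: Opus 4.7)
The plan is to execute the three-step reduction outlined in the introduction: (i) transfer the approximation problem from $\BMOH(B^d)$ to $\BMO(\SS)$ via boundary values; (ii) use the frame $\Theta = \{\theta_\xi\}$, whose atoms are fixed-length linear combinations of shifts of the Newtonian kernel, to recast $n$-term approximation as coefficient selection; and (iii) invoke a sequence-space Jackson inequality.

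\emph{Steps (i)--(ii).} Let $f := f_U$ be the boundary distribution of $U$ given by \eqref{rep-f-33}. \thmref{thm:equiv-norms-B} yields $\|f\|_{\cB^{s\tau}_\tau(\SS)} \sim \|U\|_{B^{s\tau}_\tau(\HHH)}$. Every $G \in \NNN_n$ is harmonic on $B^d$ (its poles lie outside $\overline{B^d}$) and smooth up to $\SS$, so $G$ coincides with the harmonic extension of $G|_\SS$; combined with \thmref{thm:BMOH} this gives $\|U-G\|_{\BMOH} \sim \|f - G|_\SS\|_{\BMO}$, hence $E_n(U)_{\BMOH} \sim E_n(f)_{\BMO(\SS)}$. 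The membership $U \in \VMOH$ follows from $\cB^{s\tau}_\tau(\SS) \hookrightarrow \VMO(\SS) = \sepTL{\cF}{0}{2}(\SS)$ (\thmref{thm:BMO-F}), obtained by transporting the sequence-space embedding $\fb^{s\tau}_\tau \hookrightarrow \sepTL{\ff}{0}{2}$ (a consequence of $1/\tau = s/(d-1)$) through the frame isomorphisms of \thmref{thm:prop-frame} and \thmref{F-norm-infty-equiv}. Next, apply the dual pair $(\Theta, \tilde\Theta)$ on both $\cB^{s\tau}_\tau$ (\thmref{thm:prop-frame}) and $\sepTL{\cF}{0}{2}$ (\thmref{thm:frame-infty}); with $c_\xi := \langle f, \tilde\theta_\xi \rangle$ we have $f = \sum_{\xi \in \cX} c_\xi \theta_\xi$ together with
\begin{equation*}
\|\{c_\xi\}\|_{\fb^{s\tau}_\tau} \sim \|f\|_{\cB^{s\tau}_\tau}, \qquad \|\{c_\xi\}\|_{\ff^{02}_\infty} \sim \|f\|_{\BMO}.
\end{equation*}
For any $\Lambda_n \subset \cX$ with $\#\Lambda_n \le n$, the approximant $G_n := \sum_{\xi \in \Lambda_n} c_\xi \theta_\xi$ lies in $\NNN_{Ln}$ because each $\theta_\xi$ is, by its construction in \cite[(6.36)--(6.37)]{IP2}, a linear combination of a fixed number $L$ of shifts of the Newtonian kernel; boundedness of $T_\theta : \ff^{02}_\infty \to \cF^{02}_\infty$ then gives $\|f - G_n\|_{\BMO} \le c \|\{c_\xi\}_{\xi \notin \Lambda_n}\|_{\ff^{02}_\infty}$.

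\emph{Step (iii), and the main obstacle.} The technical heart is to show that for every $h \in \fb^{s\tau}_\tau$ with $1/\tau = s/(d-1)$ there exists $\Lambda_n$ of size $\le n$ satisfying
\begin{equation*}
\|\{h_\xi\}_{\xi \notin \Lambda_n}\|_{\ff^{02}_\infty} \le c\, n^{-s/(d-1)} \|h\|_{\fb^{s\tau}_\tau},
\end{equation*}
the sequence-space Jackson estimate referenced in the introduction (\thmref{thm:Jackson}). This is the main difficulty, because after setting $a_\xi := |Q_\xi|^{-1/2}|h_\xi|$ (so that $\|h\|_{\fb^{s\tau}_\tau}^\tau \sim \sum_\xi a_\xi^\tau$ via \eqref{def-b-space} and the identity $s + (d-1)(1/2 - 1/\tau) = (d-1)/2$ implied by $1/\tau = s/(d-1)$, while $\|h\|_{\ff^{02}_\infty}^2 \sim \sup_\xi |Q_\xi|^{-1}\sum_{Q_\eta \subset Q_\xi} |Q_\eta| a_\eta^2$ by \eqref{def-f-space-infty2}), the $\ff^{02}_\infty$-quasinorm averages $a_\eta^2$ against the weights $|Q_\eta|/|Q_\xi|$ over \emph{infinitely many} descendant levels -- each level's weights summing to $1$ by \eqref{dyadic_eq:5} -- so a naive $\ell^\tau \hookrightarrow \ell^\infty$ argument on the $a_\xi$ fails. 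I would prove it by a dyadic layer-cake: split $\cX$ into layers $A_k := \{\xi : 2^{-k-1}\|h\|_{\fb^{s\tau}_\tau} < a_\xi \le 2^{-k}\|h\|_{\fb^{s\tau}_\tau}\}$, use $\#A_k \lesssim 2^{k\tau}$ coming from $\sum a_\xi^\tau < \infty$, include in $\Lambda_n$ all layers whose cumulative count is $\le n$ (which covers layers with index $k \le k_0$ for $k_0 \sim \tau^{-1}\log_2 n$), and for each $\xi$ estimate the tail contribution level-by-level using the cardinality bound on $A_k \cap Q_\xi$ together with $a_\eta \le 2^{-k}\|h\|_{\fb^{s\tau}_\tau}$ on $A_k$; the exponent matching $1/\tau = s/(d-1)$ is exactly what makes the resulting geometric series sum to $n^{-2s/(d-1)}\|h\|_{\fb^{s\tau}_\tau}^2$ uniformly in $\xi$. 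Once \thmref{thm:Jackson} is in hand, replacing $n$ by $\lfloor n/L \rfloor$ and combining with Steps (i)--(ii) yields \eqref{jackson-VMOH}.
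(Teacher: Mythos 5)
Your Steps (i)--(ii) match the paper's reduction exactly: Theorem~\ref{thm:BMOH} and Theorem~\ref{thm:equiv-norms-B} transfer the problem to $\SS$, and Theorem~\ref{thm:BMO-F} together with the frame Theorems~\ref{thm:prop-frame} and \ref{thm:frame-infty} reduce it to a coefficient-selection problem. The weight normalization $s + (d-1)(1/2 - 1/\tau) = (d-1)/2$, identifying $\fb^{s\tau}_\tau$ with $\ell^\tau$ and $\ff^{02}_\infty$ with $\fg^2$ after setting $a_\xi = |Q_\xi|^{-1/2}|h_\xi|$, is also correct. This is precisely the paper's route to Theorem~\ref{thm:Jackson_TL}.

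The gap is in Step (iii). The proposed dyadic layer-cake does \emph{not} give the sharp sequence-space estimate of Theorem~\ref{thm:Jackson}; it loses a logarithmic factor. Set $\|h\|_{\ell^\tau}=1$, $A_k := \{\eta : 2^{-k-1} < a_\eta \le 2^{-k}\}$, so $\#A_k \lesssim 2^{(k+1)\tau}$, and take $\Lambda_n = \cup_{k\le k_0}A_k$ with $k_0 \sim \tau^{-1}\log_2 n$. For a fixed root $\xi$, you would bound the tail by
\begin{equation*}
\sum_{k>k_0} 2^{-2k} \sum_{\substack{\eta\in A_k\\ Q_\eta\subset Q_\xi}}\frac{|Q_\eta|}{|Q_\xi|}.
\end{equation*}
But the only information you have about the inner sum is a cardinality bound. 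Since within each generation the weights $|Q_\eta|/|Q_\xi|$ sum to $1$, and on $\SS$ each generation $m$ levels below $\xi$ has $\sim b^{m(d-1)}$ descendants of essentially equal measure $\sim b^{-m(d-1)}|Q_\xi|$, the maximum of $\sum_{\eta\in S}|Q_\eta|/|Q_\xi|$ over sets $S$ of $N$ descendants is $\sim \log N$ (take the $N$ descendants nearest $\xi$: roughly $\log N / ((d-1)\log b)$ full generations, each contributing weight $1$). With $N = \#A_k \lesssim 2^{(k+1)\tau}$ this gives $\sum_{\eta\in A_k, Q_\eta\subset Q_\xi}|Q_\eta|/|Q_\xi| \lesssim k\tau$, so the tail is $\lesssim \sum_{k>k_0}k\,2^{-2k} \sim k_0\,2^{-2k_0} \sim \tau^{-1}(\log_2 n)\,n^{-2/\tau}$, i.e. $\sigma_n(h)_{\fg^2}\lesssim (\log n)^{1/2} n^{-1/\tau}$, which is off by $\sqrt{\log n}$. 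The series is not geometric: the measure accumulation contributes a linear-in-$k$ factor, and the exponent relation $1/\tau=s/(d-1)$ cannot cancel it because it already appears (correctly) in the identification $\fb^{s\tau}_\tau\leftrightarrow\ell^\tau$, $\ff^{02}_\infty\leftrightarrow\fg^2$; after that identification the problem is purely $\ell^\tau$-to-$\fg^q$ with no free exponent.

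The paper's proof of Theorem~\ref{thm:Jackson} avoids the logarithm by making $\Lambda_n$ \emph{tree-adapted} rather than purely magnitude-adapted. It forms the finite tree $\cZ^1=\{\xi: \sum_{Q_\eta\subset Q_\xi}a_\eta^\tau > m^{-1}A\}$, which has $\le m-1$ minimal nodes and hence $\le m-2$ branching nodes; $\Lambda_n$ consists of the $m-1$ largest-magnitude indices \emph{plus} all branching nodes of $\cZ^1$. The crucial point is Lemma~\ref{lem3}: over a finite tree, the total measure of the non-branching nodes is $\le \frac{1}{1-\lambda}|Q_\zeta|$ (a constant, not a $\log$), because between consecutive branching nodes the measures decay geometrically by \eqref{dyad_eq:6}. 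Spending part of the budget $n$ on branching nodes is exactly what removes the measure accumulation that the magnitude-only layer-cake cannot control. Without this tree-structural ingredient, Step (iii) is incomplete and the claimed bound does not follow.
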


This theorem follows at once by harmonic extension (see \thmref{thm:BMOH}, \thmref{thm:equiv-norms-B})
from the following

\begin{thm}[Jackson estimate]\label{thm:Jackson_VMO}
If $f\in \cB^{s\tau}_{\tau}(\SS)$, $1/\tau=s/(d-1)$, $s>0$,
then $f\in \VMO(\SS)$ and for $n\ge 1$
\begin{equation}\label{jackson}
E_n(f)_{\BMO} \le cn^{-s/(d-1)}\|f\|_{\cB^{s\tau}_{\tau}(\SS)},
\end{equation}
where the constant $c>0$ depends only on $s$, $d$.
\end{thm}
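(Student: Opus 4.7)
The plan is to expand $f$ in the Newtonian-kernel frame $\{\theta_\xi\}_{\xi\in\cX}$ of \thmref{thm:prop-frame}, perform an $n$-term greedy selection at the level of frame coefficients, and then translate the result back to $\BMO(\SS)$ using $\BMO=\cF^{02}_\infty$ (\thmref{thm:BMO-F}). Membership $f\in\VMO(\SS)$ drops out as a by-product of the critical embedding $\cB^{s\tau}_\tau\hookrightarrow\cF^{02}_\infty=\BMO$, which at the sequence level reduces (writing $c_\xi:=|h_\xi||Q_\xi|^{-1/2}$, $h_\xi:=\langle f,\tilde\theta_\xi\rangle$, so that $\|h\|^\tau_{\fb^{s\tau}_\tau}=\sum_\xi c_\xi^\tau$ by the choice $1/\tau=s/(d-1)$) to the routine bound $\|h\|^2_{\ff^{02}_\infty}\le(\sup_\zeta c_\zeta)^{2-\tau}\|c\|^\tau_{\ell^\tau}\le\|c\|^2_{\ell^\tau}$ coming from the Carleson-type formula of Definition~\ref{def:TL_infty} together with $|Q_\eta|\le|Q_\xi|$. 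Combining this continuous embedding with the density of $C^\infty(\SS)$ in the separable space $\cB^{s\tau}_\tau$ and the fact that $\VMO$ is the $\BMO$-closure of $C^\infty$ (\propref{prop:BMO}(b)) then places $f$ in $\VMO$.

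By \thmref{thm:prop-frame} I write $f=\sum_{\xi\in\cX}h_\xi\theta_\xi$ with $\|h\|_{\fb^{s\tau}_\tau}\sim\|f\|_{\cB^{s\tau}_\tau}$. Each $\theta_\xi$ is a fixed linear combination of at most $K_0$ shifts of the Newtonian kernel (from the construction in \cite{IP2}), so any partial sum $\sum_{\xi\in\Lambda}h_\xi\theta_\xi$ with $|\Lambda|\le n$ lies in $\NNN_{K_0n}$. It therefore suffices to find $\Lambda_n\subset\cX$ with $|\Lambda_n|\le n$ obeying the discrete Jackson estimate
\begin{equation*}
\|h-h|_{\Lambda_n}\|_{\ff^{02}_\infty}\le c\,n^{-s/(d-1)}\|h\|_{\fb^{s\tau}_\tau}.
\end{equation*}

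I would select $\Lambda_n$ from the indices carrying the largest values of $c_\xi$ and analyze the residual through the tree $\cX$ of \thmref{nested_sets}. The key obstacle is that, since the $n$-th largest $c_\xi$ is only bounded by $n^{-1/\tau}\|c\|_{\ell^\tau}$, a flat thresholding combined with the crude estimate sketched above yields only the inferior rate $n^{-(1/\tau-1/2)_+}$; to recover the sharp $n^{-s/(d-1)}=n^{-1/\tau}$ one needs a layered stopping-time decomposition that sums, inside each Carleson box, the level-by-level $\ell^\tau$ content of $h$ against the geometric damping $|Q_\eta|/|Q_\xi|\sim b^{-k(d-1)}$ supplied by \thmref{nested_sets}. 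This is the sequence analog of the main Jackson inequality advertised as \thmref{thm:Jackson} in the Outline, and it is the true heart of the proof.

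Granting the discrete Jackson estimate, set $G_n:=T_\theta(h|_{\Lambda_n})=\sum_{\xi\in\Lambda_n}h_\xi\theta_\xi\in\NNN_{K_0n}$. The synthesis operator $T_\theta$ is bounded $\ff^{02}_\infty\to\cF^{02}_\infty$: the proof of \thmref{thm:frame-infty}(a) carries over verbatim to the non-separable case via \thmref{thm:almost_diag} and \propref{almost_diag_infty}. Consequently
\begin{equation*}
\|f-G_n\|_{\BMO}\sim\|f-G_n\|_{\cF^{02}_\infty}=\|T_\theta(h-h|_{\Lambda_n})\|_{\cF^{02}_\infty}\le c\|h-h|_{\Lambda_n}\|_{\ff^{02}_\infty}\le c\,n^{-s/(d-1)}\|f\|_{\cB^{s\tau}_\tau},
\end{equation*}
and replacing $n$ by $\lfloor n/K_0\rfloor$ delivers the stated bound for $E_n(f)_{\BMO}$.
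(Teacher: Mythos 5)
Your proposal follows essentially the same route as the paper: expand $f$ in the Newtonian-kernel frame $\Theta$, translate the problem to the sequence spaces $\fb^{s\tau}_\tau$ and $\ff^{02}_\infty$, invoke the discrete Jackson estimate (the paper's Theorem~\ref{thm:Jackson}) together with the tree structure of Theorem~\ref{nested_sets}, and synthesize back via Theorem~\ref{thm:frame-infty} and the identification $\BMO\sim\cF^{02}_\infty$. One small caveat: your one-line sketch of the embedding $\ell^\tau\subset\fg^2$ via $c_\eta^2\le(\sup_\zeta c_\zeta)^{2-\tau}c_\eta^\tau$ implicitly assumes $\tau\le2$ (i.e.\ $s\ge(d-1)/2$); for $\tau>2$ one needs the H\"older argument of Lemma~\ref{lem1}, but since you ultimately rely on Theorem~\ref{thm:Jackson} (which subsumes the embedding) and on Theorem~\ref{thm:frame-infty} only in the separable range, this does not affect the validity of your argument.
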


Our development in this article depends heavily on the fact that
the space $\VMO$ fits in the Littlewood-Paley theory and
as shown in Theorem~\ref{thm:BMO-F} coincides with the separable Triebel-Lizorkin space $\sepTL{\cF}{0}{2}(\SS)$
and has a frame decomposition.
To put it in perspective, we consider next the nonlinear $n$-term approximation from shifts of the Newtonian kernel
in the more general Triebel-Lizorkin spaces $\cF_\infty^{0q}(\SS)$, $0<q\le\infty$.
Given $f\in \cF_\infty^{0q}(\SS)$ we define
\begin{equation}\label{def:best-app}
E_n(f)_{\cF_\infty^{0q}}:=\inf_{G\in\NNN_n}\|f-G|_{\SS}\|_{\cF_\infty^{0q}}.
\end{equation}
This sort of approximation makes sense only if $\lim_{n\to\infty}E_n(f)_{\cF_\infty^{0q}}=0$.
Therefore, we are interested in the approximation defined in \eqref{def:best-app} only of 
functions $f$ from the separable space $\sepTL{\cF}{0}{q}(\SS)$.

The following result coupled with Theorem~\ref{thm:BMO-F} implies readily Theorem~\ref{thm:Jackson_VMO}.

\begin{thm}[Jackson estimate]\label{thm:Jackson_TL}
If $f\in \cB^{s\tau}_{\tau}(\SS)$, $1/\tau=s/(d-1)$, $s>0$, $0<q\le\infty$,
then $f\in \sepTL{\cF}{0}{q}(\SS)$ and for $n\ge 1$
\begin{equation}\label{jackson_TL}
E_n(f)_{\cF_\infty^{0q}} \le cn^{-s/(d-1)}\|f\|_{\cB^{s\tau}_{\tau}},
\end{equation}
where the constant $c>0$ depends only on $s$, $d$, $q$.
\end{thm}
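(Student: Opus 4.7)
The plan is to transfer the approximation problem from $\cF_\infty^{0q}(\SS)$ to the sequence space $\ff_\infty^{0q}(\cX)$ through the frame $\{\theta_\xi\}$ of linear combinations of shifts of the Newtonian kernel, and then apply the sequence-space Jackson inequality (Theorem~\ref{thm:Jackson}) mentioned earlier.

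First, I would verify that $f \in \sepTL{\cF}{0}{q}(\SS)$. Since $1/\tau = s/(d-1)$ and $s>0$, a direct computation of the $\fb^{s\tau}_\tau$ norm shows $s+(d-1)(1/2-1/\tau) = (d-1)/2$, so for the frame coefficients $h_\xi := \langle f, \tilde\theta_\xi\rangle$ one has $\|h\|_{\fb^{s\tau}_\tau}^\tau \sim \sum_\xi |Q_\xi|^{-\tau/2}|h_\xi|^\tau$ by \eqref{dyadic_eq:6}. Since $\tau < 2$, this dominates the $\sepTL{\ff}{0}{q}$ norm (through a Hölder/embedding argument together with the obvious fact that finitely supported sequences are dense in $\sepTL{\ff}{0}{q}$, cf.\ Proposition~\ref{prop:sep-TL-infty}(b)); by the frame boundedness in Theorem~\ref{F-norm-infty-equiv} this transfers to $\sepTL{\cF}{0}{q}(\SS)$, giving $\cB^{s\tau}_\tau \hookrightarrow \sepTL{\cF}{0}{q}$ and $\|f\|_{\sepTL{\cF}{0}{q}} \le c\|f\|_{\cB^{s\tau}_\tau}$.

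Next I would invoke the dual-frame decomposition from Theorem~\ref{thm:frame-infty}:
\begin{equation*}
f = \sum_{\xi\in\cX} h_\xi \theta_\xi, \qquad h_\xi := \langle f, \tilde\theta_\xi\rangle,
\end{equation*}
with $\|h\|_{\fb^{s\tau}_\tau} \sim \|f\|_{\cB^{s\tau}_\tau}$ by Theorem~\ref{thm:prop-frame}(c) (applied to the Besov spaces, whose range includes $(s\tau,\tau)\in\QQ(A)$ for $A$ sufficiently large depending on $s$). Now I apply the sequence-space Jackson inequality: choose a set $\Lambda_n \subset \cX$ with $\#\Lambda_n \le n$ consisting of indices of the $n$ largest (renormalized) coefficients of $h$, and let $h^{(n)} := h\cdot \ONE_{\Lambda_n}$; Theorem~\ref{thm:Jackson} yields
\begin{equation*}
\|h - h^{(n)}\|_{\ff_\infty^{0q}} \le c n^{-s/(d-1)} \|h\|_{\fb^{s\tau}_\tau}.
\end{equation*}
Set $G_n := \sum_{\xi\in\Lambda_n} h_\xi \theta_\xi$. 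By the construction in \cite[(6.36)--(6.37)]{IP2} each $\theta_\xi$ is a linear combination of a fixed number $N_0$ of shifts of the Newtonian kernel (with poles in $\RR^d\setminus\overline{B^d}$), hence $G_n \in \NNN_{N_0 n}$.

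Finally, using the boundedness of $T_\theta: \sepTL{\ff}{0}{q} \to \sepTL{\cF}{0}{q}$ from Theorem~\ref{thm:frame-infty}(a),
\begin{equation*}
\|f - G_n\|_{\cF_\infty^{0q}} = \|T_\theta(h - h^{(n)})\|_{\cF_\infty^{0q}} \le c\,\|h - h^{(n)}\|_{\ff_\infty^{0q}} \le c\, n^{-s/(d-1)} \|f\|_{\cB^{s\tau}_\tau}.
\end{equation*}
Since $G_n \in \NNN_{N_0 n}$, this gives $E_{N_0 n}(f)_{\cF_\infty^{0q}} \le c\, n^{-s/(d-1)} \|f\|_{\cB^{s\tau}_\tau}$, and replacing $n$ by $\lfloor n/N_0\rfloor$ (absorbing $N_0^{s/(d-1)}$ into the constant) yields the claimed bound for every $n \ge 1$.

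The main obstacle is the sequence-space Jackson inequality, which must handle the fact that $\ff_\infty^{0q}$ is a \emph{Carleson-type} sup norm rather than a simple $\ell^q$ norm: one must show that truncation to the $n$ largest coefficients in the Besov $\ell^\tau$-type ordering efficiently controls the localized $\ell^q$-averages over every nested cell $Q_\xi$ simultaneously. This is where the hypothesis $1/\tau=s/(d-1)$ becomes critical, providing just enough summability to tame the supremum. The remaining technical point is a careful handling of the $q=\infty$ endpoint, where the $\ff_\infty^{0\infty} = \fb^0_\infty{}^{\infty}$ identification (via \eqref{def-f-space-infty1}) simplifies the sup to a single maximum of normalized coefficients.
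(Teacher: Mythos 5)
Your approach is essentially identical to the paper's: renormalize the frame coefficients $\langle f,\tilde\theta_\xi\rangle$ to transfer the problem into the sequence pair $(\ell^\tau, \fg^q)$, apply the sequence-space Jackson estimate (Theorem~\ref{thm:Jackson}), transport back via the boundedness of $T_\theta:\sepTL{\ff}{0}{q}\to\sepTL{\cF}{0}{q}$, and absorb the fixed number $\tilde n$ of Newtonian shifts per $\theta_\xi$ into the constant. One genuine error in your write-up: you assert ``Since $\tau<2$, this dominates the $\sepTL{\ff}{0}{q}$ norm.'' But $\tau=(d-1)/s$, and for $0<s<(d-1)/2$ one has $\tau>2$; the hypothesis $s>0$ gives no bound on $\tau$ at all, so this line of reasoning does not apply in general. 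The embedding you need, namely $\ell^\tau(\cX)\subset\fg^q(\cX)$ after the rescaling $h_\eta=|Q_\eta|^{-1/2}\langle f,\tilde\theta_\eta\rangle$, in fact holds for \emph{all} $0<\tau<\infty$ and $0<q\le\infty$; it is the paper's Lemma~\ref{lem1}, proved by concavity of $t^{\tau/q}$ together with $|Q_\eta|\le|Q_\xi|$ when $\tau\le q$, and by H\"older's inequality combined with the geometric decay of $|Q_\eta|/|Q_\xi|$ across levels (from the nested structure) when $\tau>q$. If you replace ``Since $\tau<2$'' with an invocation of that lemma (or simply note that the embedding also follows as a byproduct of Theorem~\ref{thm:Jackson} applied with $n=1$), the argument is complete and matches the paper's.
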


The fundamental fact that the spaces $\cB^{s\tau}_{\tau}$ and $\sepTL{\cF}{0}{q}$ (in particular $\VMO$)
have frame decompositions allows to reduce the proof of the above theorem to approximation in sequence spaces
that we consider next. The final step in the proof of Theorem~\ref{thm:Jackson_TL} will be given thereafter.

\subsection{Jackson and Bernstein estimates for nonlinear approximation in $\ff^{0 q}_\infty$}

Here we consider nonlinear $n$-term approximation from finitely supported sequences
in the spaces $\ff^{0 q}_\infty$ in a general setting.

\begin{defn}\label{nested_structure}
Let $\cX$ be a countable multilevel index set, where either $\cX=\cup_{n=0}^\infty\cX_n$ or $\cX=\cup_{n=-\infty}^\infty\cX_n$.
Let $\fD$ be a measurable domain. To each $\xi\in\cX$ we associate an open set $Q_\xi\subset\fD$.
We call $\{Q_\xi : \xi\in\cX\}$ a \emph{nested structure associate with $\cX$} if there exists constant $\kappa\ge 1$ such that
\begin{align}
&|\fD\backslash\bigcup_{\xi\in\cX_n}Q_\xi|=0\quad\forall n\in\NN_0 \; (n\in\ZZ);\label{dyad_eq:1}\\
&\mbox{If}~\eta\in\cX_m, \xi\in\cX_n~\mbox{and}~m\ge n~\mbox{then either}~Q_\eta\subset Q_\xi~\mbox{or}~Q_\eta\cap Q_\xi=\emptyset;\label{dyad_eq:2}\\
&\mbox{For every}~\xi\in\cX_n,~m< n~\mbox{there exists unique}~\eta\in\cX_m~\mbox{such that}~Q_\xi\subset Q_\eta;\label{dyad_eq:3}\\
&|Q_\eta|\le \kappa|Q_\zeta| \quad\forall\eta,\zeta\in\cX_n, \forall n\in\NN_0 (n\in\ZZ);\label{dyad_eq:4a}\\
&\mbox{Every $Q_\xi$, $\xi\in\cX_n$, has at least two children, i.e. there exist $\eta,\zeta\in\cX_{n+1}$} \label{dyad_eq:6a}\\
&\mbox{such that $Q_\eta\subset Q_\xi$, $Q_\zeta\subset Q_\xi$, $\eta\ne\zeta$.}\nonumber
\end{align}
In the case when $\cX=\cup_{n=0}^\infty\cX_n$ we also assume that $\#\cX_0=1, \xi_0\in\cX_0$ and $Q_{\xi_0}$ 
coincides with the interior of $\fD$.
If $\cX$ is of the form $\cX=\cup_{n=-\infty}^\infty\cX_n$, then we assume that there exist
finitely many mutually disjoint open sets $\fD^1,\dots,\fD^J\subset\fD$
with the properties:
$(i)$ $|\fD\setminus\cup_{j=1}^J\fD^j|=0$;
$(ii)$ for every $\xi\in\cX$ there is $j, 1\le j\le J$, such that $Q_\xi\subset \fD^j$;
$(iii)$ if the sets $Q_\xi,Q_\eta$ are contained in $\fD^j$ for some $j=1,\dots,J$,
then there exists $Q_\zeta\subset\fD^j$ such that $Q_\xi\subset Q_\zeta$, $Q_\eta\subset Q_\zeta$.
\end{defn}

\begin{rem}
Conditions \eqref{dyad_eq:1}--\eqref{dyad_eq:3} imply that for every $\xi\in\cX_n$ we have
\begin{equation}\label{dyad_eq:7}
|Q_\xi|=\sum_{\substack{\eta\in\cX_{n+j}\\ Q_\eta\subset Q_\xi}}|Q_\eta|,
%\quad \text{for every}~j\in\NN.
\quad \forall j\in\NN.
\end{equation}

Conditions \eqref{dyad_eq:4a}--\eqref{dyad_eq:6a} imply that there exists $\lambda\in(0,1)$, namely $\lambda=\kappa/(\kappa+1)$, such that
\begin{equation}\label{dyad_eq:6}
|Q_\eta|\le \lambda |Q_\xi|\quad\forall n\in\NN_0 (n\in\ZZ), \xi\in\cX_n, \eta\in\cX_{n+1}, Q_\eta\subset Q_\xi.
\end{equation}
\end{rem}

\begin{rem}
The open dyadic cubs in $\fD=\RR^d$ form such a nested structure with $\cX=\cup_{n=-\infty}^\infty\cX_n$.
The open dyadic sub-cubs of a fixed dyadic cub $\fD$, $\fD\subset\RR^d$, also form such a nested structure with $\cX=\cup_{n=0}^\infty\cX_n$.
In both cases $\kappa=1$, $\lambda=2^{-d}$.

For the open dyadic cubs structure in $\fD=\RR^d$ we have $J=2^d$.
For example, for $d=1$ we have $\fD^1=(0,\infty), \fD^2=(-\infty,0)$ and for $d=2$ we have
$\fD^1=(0,\infty)\times(0,\infty), \fD^2=(-\infty,0)\times(0,\infty), \fD^3=(-\infty,0)\times(-\infty,0), \fD^4=(0,\infty)\times(-\infty,0)$.

The construction from \thmref{nested_sets} is another example of 
a nested structure associated with a given index set $\cX$ consisting of maximal $\delta_n$-nets on $\SS$.
\end{rem}

Recall that by definition the sequence spaces $\ell^\tau=\ell^\tau(\cX)$, $0<\tau\le\infty$,
consists of all sequences $\{h_\xi : \xi\in\cX\}$ such that
\begin{equation*}
\|h\|_{\ell^\tau}
:= \bigg(\sum_{\xi\in \cX}|h_\xi|^\tau\bigg)^{1/\tau}<\infty.
\end{equation*}
We define the sequence spaces $\fg^q=\fg^q(\cX)$, $0<q\le\infty$, as the set of all sequences $\{h_\xi : \xi\in\cX\}$ such that
\begin{equation}\label{def:g_q}
\|h\|_{\fg^q}
:= \sup_{\xi\in\cX}\bigg(\sum_{Q_\eta\subset Q_\xi}|h_\eta|^q \frac{|Q_\eta|}{|Q_\xi|}\bigg)^{1/q}<\infty.
\end{equation}
Note that the definition of $\ell^\tau(\cX)$ does not need a nested structure associate with $\cX$,
while the definition of $\fg^q(\cX)$, $q<\infty$, requires such kind of structure.
Of course, for $q=\tau=\infty$ we have $\fg^\infty=\ell^\infty$.

The best nonlinear approximation of $h\in\fg^q$ from sequences with at most $n$ non-zero elements is given by
\begin{equation*}
\sigma_n(h)_{\fg^q}:=\inf_{|\supp~\bar{h}|\le n}\|h-\bar{h}\|_{\fg^q}
=\inf_{\substack{\Lambda_n\subset\cX\\ \#\Lambda_n\le n}}\sup_{\xi\in\cX}\Bigg(
\sum_{\substack{Q_\eta\subset Q_\xi\\ \eta\notin\Lambda_n}}|h_\eta|^q \frac{|Q_\eta|}{|Q_\xi|}\Bigg)^{1/q}.
\end{equation*}
%If the support of $\bar{h}$ is fixed then the best choice for the non-zero elements of $\bar{h}$ is to be equal to the corresponding elements of $h$.

\begin{thm}[Jackson estimate]\label{thm:Jackson}
Let $0<\tau<\infty$ and $0<q\le\infty$.
Assume $\{Q_\xi : \xi\in\cX\}$ is a nested structure associate with $\cX$.
There exists a constant $c=c(\tau,q,\kappa)$
such that for any $h\in\ell^{\tau}$ we have $h\in\fg^q$ and
\begin{equation}\label{Jackson}
\sigma_n(h)_{\fg^q}\le c n^{-1/\tau}\|h\|_{\ell^\tau}, \quad n\in\NN.
\end{equation}
\end{thm}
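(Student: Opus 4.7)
My plan is to prove \eqref{Jackson} by a greedy thresholding argument on the magnitudes of the entries of $h$. Set $\epsilon := n^{-1/\tau}\|h\|_{\ell^\tau}$ and let $\Lambda_n := \{\eta \in \cX : |h_\eta| > \epsilon\}$. Chebyshev's inequality gives $\#\Lambda_n \le \epsilon^{-\tau}\|h\|_{\ell^\tau}^\tau = n$, so $h \cdot \ONE_{\Lambda_n}$ is an admissible approximant and the tail $g := h - h \cdot \ONE_{\Lambda_n}$ satisfies the two key estimates $|g_\eta| \le \epsilon$ pointwise and $\|g\|_{\ell^\tau} \le \|h\|_{\ell^\tau}$.

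For $q = \infty$ we have $\fg^\infty = \ell^\infty$, so the pointwise bound yields $\sigma_n(h)_{\fg^\infty} \le \|g\|_{\ell^\infty} \le \epsilon = n^{-1/\tau}\|h\|_{\ell^\tau}$ immediately. For $0 < q < \infty$ the task is to bound $\sup_{\xi \in \cX} |Q_\xi|^{-1} \sum_{Q_\eta \subset Q_\xi} |g_\eta|^q |Q_\eta|$ by a constant times $\epsilon^q$. My approach is a layer-cake decomposition
$$\sum_{Q_\eta \subset Q_\xi} |g_\eta|^q |Q_\eta| = q \int_0^\epsilon t^{q-1} \Bigg(\sum_{\substack{Q_\eta \subset Q_\xi \\ |g_\eta| > t}} |Q_\eta|\Bigg) dt,$$
controlled via two competing estimates on the inner measure sum: the cardinality bound $\#\{\eta : |g_\eta| > t\} \le t^{-\tau} \|h\|_{\ell^\tau}^\tau$ (combined with $|Q_\eta| \le |Q_\xi|$ for $\eta$ with $Q_\eta \subset Q_\xi$), together with the level-by-level measure-conservation identity \eqref{dyad_eq:7}, namely $\sum_{Q_\eta \subset Q_\xi,\, \eta \in \cX_{j}} |Q_\eta| = |Q_\xi|$ for every $j$, and the geometric decay of $|Q_\eta|$ across levels coming from \eqref{dyad_eq:6}.

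The main obstacle will be extracting the sharp rate $n^{-1/\tau}$ rather than the weaker $n^{-(1/\tau - 1/q)}$ produced by the naïve combination $|g_\eta|^q \le \epsilon^{q-\tau}|g_\eta|^\tau$ with $|Q_\eta| \le |Q_\xi|$; the gain requires genuinely using the nested tree structure, so that the mass contributions within $Q_\xi$ at each scale are capped by $|Q_\xi|$ rather than by $|Q_\xi| \cdot \#\{\text{nodes}\}$. To balance these constraints I may have to refine the greedy selection itself via a Calderón--Zygmund stopping-time argument applied to the mass functional $\xi \mapsto |Q_\xi|^{-1} \sum_{Q_\eta \subset Q_\xi} |h_\eta|^q |Q_\eta|$, choosing deletions scale by scale so that the approximation error is equidistributed across all $\xi \in \cX$. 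This is the step I expect to be the main technical hurdle.
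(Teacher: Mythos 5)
Your $q=\infty$ case is correct and identical to the paper's. For $q<\infty$, however, the proposal stops precisely at the point where the theorem has content: you correctly observe that pure greedy thresholding plus interpolation only yields $n^{-(1/\tau - 1/q)}$ (and yields no decay at all when $q\le\tau$), and you correctly guess that the fix must be a stopping-time argument that exploits the tree structure; but you defer exactly that step as "the main technical hurdle." This is not a detail --- it is the whole argument.

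It is worth seeing concretely that greedy thresholding genuinely fails, not merely that the naive estimate is lossy. In a dyadic binary tree (every $Q_\xi$ has two children of equal measure, so $\lambda=\tfrac12$), put $h_\eta=\delta$ for all $\eta$ on the first $M+1$ levels below the root, so $N=2^{M+1}-1$ nonzero entries and $\|h\|_{\ell^\tau}=N^{1/\tau}\delta$. Take $n\approx N/2$. Then $\epsilon:=n^{-1/\tau}\|h\|_{\ell^\tau}=2^{1/\tau}\delta>\delta$, so $\Lambda_n=\{\eta:|h_\eta|>\epsilon\}=\emptyset$ and your approximant is trivial; the error is $\|h\|_{\fg^q}\sim(M+1)^{1/q}\delta$, which blows up relative to the target rate $c\,n^{-1/\tau}\|h\|_{\ell^\tau}\sim\delta$ as $M\to\infty$. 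Yet a good $n$-term approximant does exist: deleting the $2^{M}-1\le n$ shallowest entries (levels $0$ through $M-1$) leaves error $\sim\delta$. So the correct $\Lambda_n$ must be chosen by position in the tree, not by size of $|h_\eta|$; a magnitude threshold cannot find it.

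The paper's selection is a genuine Calder\'on--Zygmund stopping construction on the square-function-type quantity $A_\xi:=\sum_{Q_\eta\subset Q_\xi}|h_\eta|^\tau$. With $m=\lfloor(n+3)/2\rfloor$ one sets $\cZ^1:=\{\xi:A_\xi>m^{-1}A\}$ (a finite tree, since $A_{\xi_0}=A<\infty$ and $h\in\ell^\tau$). The approximation set $\Lambda_n$ consists of the $m-1$ indices with largest $|h_\eta|$ \emph{together with the branching nodes of $\cZ^1$}; by the classical counting argument $\#\cZ^1_{(b)}\le\#\cZ^1_{(m)}-1\le m-2$, so $\#\Lambda_n\le 2m-3\le n$. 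The key structural lemma (Lemma~\ref{lem3}) shows that the non-branching nodes of any finite tree contribute total measure at most $(1-\lambda)^{-1}$ times the root; combined with the pointwise bound $|h_\eta|\le m^{-1/\tau}A^{1/\tau}$ on $\cZ^1\setminus\Lambda_n$ (Lemma~\ref{lem2}), this controls the sum inside $\cZ^1$. Outside $\cZ^1$ (i.e.\ on $\cZ^2$, where $A_\xi\le m^{-1}A$), the embedding $\ell^\tau\subset\fg^q$ (Lemma~\ref{lem1}) and disjointness of the maximal elements of $\cZ^2$ finish the bound. Note that the branching nodes need not be among the large entries of $h$; they are selected purely for their combinatorial role, which is the conceptual step your layer-cake decomposition does not contain. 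If you want to complete the proof along your lines you would need, at minimum, to supply this stopping-time partition, the tree-measure lemma, and the argument that removing branching nodes of the heavy tree suffices.
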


\begin{thm}[Bernstein estimate]\label{thm:Bernstein}
Under the assumptions of Theorem~\ref{thm:Jackson} for any sequence $h:=\{h_\xi\}_{\xi\in \cX}$
with at most $n\in \NN$ non-zero elements we have
\begin{equation}\label{Bernstein}
\|h\|_{\ell^\tau}\le n^{1/\tau}\|h\|_{\fg^q}.
\end{equation}
\end{thm}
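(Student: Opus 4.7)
The plan is to reduce the estimate to the elementary bound $\|h\|_{\ell^\tau} \le n^{1/\tau}\|h\|_{\ell^\infty}$ valid for any sequence with at most $n$ non-zero entries, by first observing that the quasi-norm $\|\cdot\|_{\fg^q}$ pointwise dominates the sup norm.

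First I would verify the key pointwise bound: for every $\eta_0\in\cX$,
\begin{equation*}
|h_{\eta_0}| \le \|h\|_{\fg^q}.
\end{equation*}
For $q<\infty$ this is seen by choosing $\xi=\eta_0$ in the supremum defining $\|h\|_{\fg^q}$ and then retaining only the term indexed by $\eta_0$ in the inner sum:
\begin{equation*}
\|h\|_{\fg^q}^q \ge \sum_{Q_\eta\subset Q_{\eta_0}} |h_\eta|^q\,\frac{|Q_\eta|}{|Q_{\eta_0}|} \ge |h_{\eta_0}|^q\,\frac{|Q_{\eta_0}|}{|Q_{\eta_0}|} = |h_{\eta_0}|^q.
\end{equation*}
For $q=\infty$ the same bound is immediate from the convention $\fg^\infty=\ell^\infty$ stated right after the definition of $\fg^q$.

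Next, denoting by $\Lambda\subset\cX$ the support of $h$ with $\#\Lambda\le n$, I would simply estimate
\begin{equation*}
\|h\|_{\ell^\tau}^\tau = \sum_{\xi\in\Lambda}|h_\xi|^\tau \le \#\Lambda\cdot\|h\|_{\ell^\infty}^\tau \le n\,\|h\|_{\fg^q}^\tau,
\end{equation*}
and take $\tau$-th roots. This yields \eqref{Bernstein} with constant $1$, as claimed.

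Because $\|\cdot\|_{\fg^q}$ already dominates the sup norm term by term, there is no real obstacle here; in particular, unlike the Jackson direction (Theorem~\ref{thm:Jackson}), this proof does not require the nested structure at all beyond the existence of the sets $\{Q_\xi\}$ appearing in the definition of $\fg^q$. The argument is also insensitive to whether $\tau\le 1$ or $\tau>1$, and handles the endpoint $q=\infty$ trivially via the identification $\fg^\infty=\ell^\infty$.
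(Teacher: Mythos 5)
Your proof is correct and takes essentially the same approach as the paper, which notes simply that the Bernstein estimate "is trivial because obviously $\|h\|_{\ell^\infty}\le \|h\|_{\fg^q}$." You have filled in the two short steps the paper leaves implicit: verifying the pointwise domination by evaluating the $\sup$ at $\xi=\eta_0$ and keeping only the diagonal term, and then applying the standard counting bound $\|h\|_{\ell^\tau}\le n^{1/\tau}\|h\|_{\ell^\infty}$ for a sequence with at most $n$ nonzero entries.
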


\begin{rem}
The matching Jackson and Bernstein estimates \eqref{Jackson} and \eqref{Bernstein} allow for characterization 
of the rate of approximation (approximation spaces) generated by $\{\sigma_n(h)_{\fg^q}\}$.
An interesting phenomenon is that the parameters  $0<\tau<\infty$ and $0<q\le\infty$
in Theorems~\ref{thm:Jackson}, \ref{thm:Bernstein} are not related.
\end{rem}

The proof of Theorem~\ref{thm:Bernstein} is trivial because obviously $\|h\|_{\ell^\infty}\le \|h\|_{\fg^q}$,
while the one of Theorem~\ref{thm:Jackson} is nontrivial.

In the proof of Theorem \ref{thm:Jackson} we shall use several lemmas. The first one is equivalent to the embedding $\ell^\tau\subset\fg^q$.
\begin{lem}\label{lem1}
If $0<\tau<\infty$, $0<q\le\infty$, and $\sA\subset\cX$, then there exists a constant $c_1=c_1(q,\tau,\kappa)$ such that
for any  $\xi\in\cX$ and any sequence $\{h_\eta\}_{\eta\in\sA}$ we have
\begin{equation}\label{lem1:1}
\Bigg(\sum_{\substack{Q_\eta\subset Q_\xi\\ \eta\in\sA}}|h_\eta|^q\frac{|Q_\eta|}{|Q_\xi|} \Bigg)^{1/q}
\le c_1 \Bigg(\sum_{\substack{Q_\eta\subset Q_\xi\\ \eta\in\sA}}|h_\eta|^\tau\Bigg)^{1/\tau}.
\end{equation}
\end{lem}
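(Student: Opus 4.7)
The plan is to split the argument according to the relative size of $q$ and $\tau$. The case $q=\infty$ is immediate with $c_1=1$, since $|h_\eta|\le\big(\sum_{\eta'\in\sA,\,Q_{\eta'}\subset Q_\xi}|h_{\eta'}|^\tau\big)^{1/\tau}$ for each individual $\eta$. The case $\tau\le q<\infty$ is also quick: the weights $|Q_\eta|/|Q_\xi|$ never exceed $1$ since $Q_\eta\subset Q_\xi$, and the standard embedding $\ell^\tau\hookrightarrow\ell^q$ with unit norm yields \eqref{lem1:1} directly, again with $c_1=1$.

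The substantive case is $0<q<\tau<\infty$. First I would apply H\"older's inequality with conjugate exponents $\tau/q$ and $\beta:=\tau/(\tau-q)$ to obtain
\[
\sum_{\eta}|h_\eta|^{q}\,\frac{|Q_\eta|}{|Q_\xi|}
\le\Big(\sum_{\eta}|h_\eta|^{\tau}\Big)^{q/\tau}
\Big(\sum_{\eta}\Big(\frac{|Q_\eta|}{|Q_\xi|}\Big)^{\beta}\Big)^{(\tau-q)/\tau},
\]
reducing the proof to bounding the purely geometric quantity $S:=\sum_{\eta}(|Q_\eta|/|Q_\xi|)^{\beta}$ by a constant depending only on $\tau$, $q$, $\kappa$.

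To estimate $S$, I would group the indices $\eta$ by their level offset $n\ge 0$ relative to $\xi$ (so $\xi\in\cX_m$ and $\eta\in\cX_{m+n}$ with $Q_\eta\subset Q_\xi$). Iterating the strict contraction \eqref{dyad_eq:6} along the chain of ancestors provided by \eqref{dyad_eq:3} gives $|Q_\eta|/|Q_\xi|\le\lambda^{n}$ with $\lambda=\kappa/(\kappa+1)<1$. Since $\beta>1$, I pull out one factor of $\lambda^{n(\beta-1)}$ and bound the remaining $\sum|Q_\eta|/|Q_\xi|$ at level $n$ by $1$ using the partition identity \eqref{dyad_eq:7}. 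Summing the resulting geometric series over $n$ gives $S\le(1-\lambda^{\beta-1})^{-1}$, which yields the required constant $c_1=c_1(q,\tau,\kappa)$.

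The only real subtlety is that the case $q<\tau$ genuinely requires the strict contraction $\lambda<1$ coming from the ``at least two children'' axiom \eqref{dyad_eq:6a}; without this, $S$ would in general diverge. This is what forces the lemma into the nested-structure setting rather than for an arbitrary index set.
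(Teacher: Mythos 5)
Your proof is correct and follows essentially the same approach as the paper's: case-split on the order of $q$ and $\tau$, H\"older with conjugate exponents $\tau/q$ and $\tau/(\tau-q)$ in the nontrivial case $q<\tau$, and a geometric-series bound on $\sum(|Q_\eta|/|Q_\xi|)^{\tau/(\tau-q)}$. The paper bounds the geometric sum by counting level-$j$ descendants (using $m_j\ge 2^j$ and $|Q_\eta|\le\kappa|Q_\xi|/m_j$) rather than invoking the derived contraction $\lambda=\kappa/(\kappa+1)$ together with the partition identity as you do, but this is a cosmetic difference drawing on the same structural axioms and yielding a constant of the same type.
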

\begin{proof}
If $\tau\le q$ then \eqref{lem1:1} follows with $c_1=1$ from $|Q_\eta|\le |Q_\xi|$ and the concavity of the function $t^{\tau/q}$.
Now, let $\tau >q$. Applying H\"{o}lder's inequality with $p=\tau/q$ and $p'=\tau/(\tau-q)>1$ we obtain
\begin{align*}
\Bigg(\sum_{\substack{Q_\eta\subset Q_\xi\\ \eta\in\sA}}|h_\eta|^q \frac{|Q_\eta|}{|Q_\xi|}\Bigg)^{1/q}
\le
\Bigg(\sum_{\substack{Q_\eta\subset Q_\xi\\ \eta\in\sA}}|h_\eta|^\tau\Bigg)^{1/\tau}
\Bigg(\sum_{Q_\eta\subset Q_\xi}\left(\frac{|Q_\eta|}{|Q_\xi|}\right)^{p'}\Bigg)^{1/(qp')}.
\end{align*}
For $\xi\in\cX_n$ and $j\in\NN_0$ set $m_j=m_j(\xi):=\#\{\zeta\in\cX_{n+j} : Q_\zeta\subset Q_\xi\}$. From \eqref{dyad_eq:6} we have $m_j\ge 2^j$. 
For every $\eta\in\cX_{n+j}$ we obtain from \eqref{dyad_eq:4a} and \eqref{dyad_eq:7} $|Q_\eta|\le\kappa|Q_\zeta|$ for every $\zeta\in\cX_{n+j}$
and hence $|Q_\eta|\le\kappa |Q_\xi|/m_j$. Therefore
\begin{equation*}
\sum_{Q_\eta\subset Q_\xi}\left(\frac{|Q_\eta|}{|Q_\xi|}\right)^{p'}
\le \sum_{j\ge 0}m_j\kappa^{p'}m_j^{-p'}
\le \kappa^{p'}\sum_{j\ge 0}2^{-j (p'-1)}=c_1 < \infty. 
\end{equation*}
Estimate \eqref{lem1:1} follows readily from above.
\end{proof}

The second lemma is obvious.
\begin{lem}\label{lem2}
If $0<\tau<\infty$ and $a_1\ge a_2\ge\dots\ge 0$ then for every $m\in \NN$ we have
\begin{equation}\label{lem2:1}
a_m\le m^{-1/\tau} \bigg(\sum_{k=1}^\infty a_k^\tau\bigg)^{1/\tau}.
\end{equation}
\end{lem}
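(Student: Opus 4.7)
The plan is straightforward: exploit monotonicity of the sequence to bound $m$ copies of $a_m^\tau$ by a partial sum, then enlarge to the full sum. Specifically, since $a_1\ge a_2\ge\dots\ge a_m\ge 0$, each of the first $m$ terms dominates $a_m$, so $a_k^\tau \ge a_m^\tau$ for $1\le k\le m$. Summing these $m$ inequalities yields
\begin{equation*}
m\, a_m^\tau \le \sum_{k=1}^m a_k^\tau \le \sum_{k=1}^\infty a_k^\tau,
\end{equation*}
where the second inequality uses only that all terms are non-negative. Taking $\tau$-th roots and dividing by $m^{1/\tau}$ gives \eqref{lem2:1}.

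There is no real obstacle here; the argument is the classical Chebyshev/Markov-type bound on the $m$-th element of a non-increasing rearrangement. The only point worth noting is that no extra hypothesis on convergence of the series is needed: if $\sum a_k^\tau = \infty$ the stated inequality holds trivially, and if it is finite the finite partial sum $\sum_{k=1}^m a_k^\tau$ is bounded by it. This is precisely why the authors label the lemma ``obvious.''
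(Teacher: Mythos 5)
Your proof is correct and is precisely the elementary Chebyshev-type argument the authors have in mind when they call the lemma ``obvious'' (the paper gives no proof). Nothing further to add.
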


The third lemma shows that the sets from a finite tree cannot concentrate a lot of area/volume outside the branching nodes.
\begin{lem}\label{lem3}
Let $\cZ\subset\cX$ be a finite tree with root $\zeta\in \cX$.
Denote by $\cZ_{(b)}$ the set of branching nodes in $\cZ$ and by $\cZ_{(m)}$ the set of minimal nodes in $\cZ$, 
i.e. nodes whose children in $\cX$ are not in $\cZ$.
Then
\begin{equation}\label{lem3:1}
\sum_{\eta\in\cZ\backslash\cZ_{(b)}}|Q_\eta|+\frac{\lambda}{1-\lambda}\sum_{\eta\in\cZ_{(m)}}|Q_\eta|\le\frac{1}{1-\lambda}|Q_\zeta|
\end{equation}
with $\lambda$ from \eqref{dyad_eq:6}.
\end{lem}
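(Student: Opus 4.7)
I would proceed by induction on the number of nodes $|\cZ|$. The base case $|\cZ|=1$ gives $\cZ=\{\zeta\}$; here $\zeta$ is minimal and non-branching, so $\cZ\setminus\cZ_{(b)}=\cZ_{(m)}=\{\zeta\}$ and the left-hand side of \eqref{lem3:1} equals $|Q_\zeta|+\frac{\lambda}{1-\lambda}|Q_\zeta|=\frac{1}{1-\lambda}|Q_\zeta|$, matching the right-hand side with equality.

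For the inductive step, let $\eta_1,\dots,\eta_k$ denote the $\cX$-children of $\zeta$ that belong to $\cZ$ (there is at least one since $|\cZ|>1$); note $\eta_i\in\cX_{n+1}$ when $\zeta\in\cX_n$, so \eqref{dyad_eq:6} yields $|Q_{\eta_i}|\le\lambda|Q_\zeta|$. I would split into two cases. If $k=1$, the root $\zeta$ is neither branching nor minimal, so $\zeta\in\cZ\setminus\cZ_{(b)}$ while the sets $\cZ_{(b)}$ and $\cZ_{(m)}$ coincide with those of the reduced tree $\cZ':=\cZ\setminus\{\zeta\}$ rooted at $\eta_1$. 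Applying the induction hypothesis to $\cZ'$ and adding $|Q_\zeta|$ to both sides gives a bound of $|Q_\zeta|+\frac{1}{1-\lambda}|Q_{\eta_1}|$, which is $\le \frac{1}{1-\lambda}|Q_\zeta|$ precisely because $|Q_{\eta_1}|\le\lambda|Q_\zeta|$. If $k\ge 2$, then $\zeta\in\cZ_{(b)}$ and $\cZ\setminus\{\zeta\}$ decomposes into the disjoint subtrees $\cZ_i$ rooted at $\eta_i$; the branching and minimal nodes of each $\cZ_i$ inherit their status from $\cZ$, so summing the induction hypothesis over $i$ yields a bound of $\frac{1}{1-\lambda}\sum_{i=1}^k|Q_{\eta_i}|$. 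Since the $Q_{\eta_i}$ are pairwise disjoint subsets of $Q_\zeta$ by \eqref{dyad_eq:2}, this is at most $\frac{1}{1-\lambda}|Q_\zeta|$.

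The main obstacle, such as it is, is the bookkeeping needed to verify that when one passes from $\cZ$ to a subtree $\cZ'$ (or $\cZ_i$), the labels ``branching'', ``minimal'', and ``neither'' of each surviving node are unchanged, so that the induction hypothesis can be applied verbatim. This is straightforward once one observes that in case $k=1$ the removed root is non-branching and non-minimal, and in case $k\ge 2$ each $\eta_i$ retains exactly the same set of $\cX$-children in $\cZ_i$ as it had in $\cZ$. The contraction $\lambda$ enters only through the single-step estimate \eqref{dyad_eq:6}, which is exactly what converts the additive extra term $|Q_\zeta|$ in the $k=1$ case into the correct coefficient $\frac{1}{1-\lambda}$.
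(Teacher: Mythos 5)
Your proof is correct. It proceeds by induction on the total number of nodes $|\cZ|$, peeling the tree from the top: in each step you remove the root and apply the induction hypothesis to the single remaining subtree (if the root has one child) or to the several disjoint subtrees (if the root branches). The paper instead inducts on $\#\cZ_{(b)}$, the number of \emph{branching} nodes: its base case is a chain (no branching nodes), which it treats as a whole using the geometric sum $\sum_\nu \lambda^\nu$, and its inductive step removes from the \emph{bottom} by selecting a branching node none of whose descendants branch and pruning everything below it. The two arguments are organized dually (top-down vs.\ bottom-up, induction variable $|\cZ|$ vs.\ $\#\cZ_{(b)}$), and each is clean. Your route is the more elementary and standard tree induction; the paper's route is slightly more economical in that it disposes of entire non-branching chains in one stroke rather than one node per step. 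In both, the contraction \eqref{dyad_eq:6} enters in exactly the same role: it is what turns the additive $|Q_\zeta|$ picked up at each non-branching level into the bounded geometric factor $\frac{1}{1-\lambda}$, and the disjointness from \eqref{dyad_eq:2} is what controls the branching levels.
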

\begin{proof}
We shall carry out the proof of this lemma by induction on $\#\cZ_{(b)}$. 
First, we prove \eqref{lem3:1} in the case $\#\cZ_{(b)}=0$, i.e. when $\cZ$ has no branching nodes,
by a straight application of \eqref{dyad_eq:6}. Take $n$ such that $\zeta\in\cX_n$, set $\eta_0=\zeta$
and denote the other nodes of $\cZ$ by $\eta_\nu\in\cX_{n+\nu}$, $\nu=1,\dots,N$. Then
\begin{align*}
\sum_{\eta\in\cZ\backslash\cZ_{(b)}}|Q_\eta|+\frac{\lambda}{1-\lambda}\sum_{\eta\in\cZ_{(m)}}|Q_\eta|
&=\sum_{\nu=0}^N |Q_{\eta_\nu}|+\frac{\lambda}{1-\lambda}|Q_{\eta_N}|
\\
&\le\bigg(\sum_{\nu=0}^N\lambda^\nu+\frac{\lambda}{1-\lambda}\lambda^N\bigg)|Q_\zeta|
=\frac{1}{1-\lambda}|Q_\zeta|.
\end{align*}
Assume \eqref{lem3:1} is established for all finite trees $\cZ\subset \cX$ with $\#\cZ_{(b)}\le k$.
Let $\tilde\cZ\subset \cX$ be a finite tree with $\#\tilde\cZ_{(b)}=k+1$.
Choose a branching node $\zeta_0$ %without other branching node as descendants in $\tilde\cZ$.
whose descendants in $\tilde\cZ$ are not branching nodes.
Then the tree $\cZ:=\{\eta\in\tilde{\cZ} : Q_\eta\not\subset Q_{\zeta_0}\}\cup\{\zeta_0\}$ will satisfy $\#\cZ_{(b)}= k$ and $\zeta_0\in\cZ_{(m)}$.
Denote by $\zeta_1,\dots,\zeta_r$ the children of $\zeta_0$ in $\tilde\cZ$. Then \eqref{lem3:1} with $\#\cZ_{(b)}=0$ implies
\begin{equation*}
\sum_{\substack{\eta\in\cZ\backslash\cZ_{(b)}\\ Q_\eta\subset Q_{\zeta_0}}}|Q_\eta|
+\frac{\lambda}{1-\lambda}\sum_{\substack{\eta\in\cZ_{(m)}\\ Q_\eta\subset Q_{\zeta_0}}}|Q_\eta|
\le\sum_{i=1}^r \frac{1}{1-\lambda}|Q_{\zeta_i}|
\le\frac{1}{1-\lambda}|Q_{\zeta_0}|.
\end{equation*}
Now, the above inequality and the inductive assumption \eqref{lem3:1} applied to $\cZ$ with $\#\cZ_{(b)}= k$ give
\begin{multline*}
\sum_{\eta\in\tilde{\cZ}\backslash\tilde{\cZ}_{(b)}}|Q_\eta|+\frac{\lambda}{1-\lambda}\sum_{\eta\in\tilde{\cZ}_{(m)}}|Q_\eta|\\
\le\sum_{\eta\in\cZ\backslash\cZ_{(b)}}|Q_\eta|+\frac{\lambda}{1-\lambda}\sum_{\eta\in\cZ_{(m)}}|Q_\eta|\le\frac{1}{1-\lambda}|Q_\zeta|.
\end{multline*}
This completes the proof.
\end{proof}

\begin{proof}[Proof of Theorem \ref{thm:Jackson}]
For $q=\infty$ \eqref{Jackson} follows immediately from \lemref{lem2}.
So, in what follows we assume $q<\infty$.

First, consider the case when $\cX=\cup_{n=0}^\infty\cX_n$, 
where all sets $\{Q_\eta\}_{\eta\in\cX}$ are contained in the set $Q_{\xi_0}$, $\xi_0\in\cX_0$.
For a given sequence $h\in\ell^\tau$ we set $a_\xi=|h_\xi|$ and rewrite inequality \eqref{Jackson} as
\begin{equation}\label{Jackson_b}
\inf_{\substack{\Lambda_n\subset\cX\\ \#\Lambda_n\le n}}\sup_{\xi\in\cX}
\Bigg(\sum_{\substack{\eta\in\cX\backslash\Lambda_n\\ Q_\eta\subset Q_\xi}}\frac{|Q_\eta|}{|Q_\xi|}a_\eta^q \Bigg)^{1/q}
\le c n^{-1/\tau}\bigg(\sum_{\eta\in \cX}a_\eta^\tau\bigg)^{1/\tau}.
\end{equation}

Set $m=\left\lfloor (n+3)/2\right\rfloor\ge 2$.

Given $\{a_\eta\}$ such that $a_\eta\ge 0$ and $\sum_{\eta\in \cX}a_\eta^\tau<\infty$, 
we set $A_\xi=\sum_{Q_\eta\subset Q_\xi}a_\eta^\tau$ and $A=A_{\xi_0}$.
Then for a given $m\in\NN$ we split $\cX$ into the union of two disjoint sets $\cZ^1$ and $\cZ^2$, $\cX=\cZ^1\cup\cZ^2$,  
defined by
\begin{equation}\label{thm:eq:1}
\cZ^1=\{\xi\in\cX : A_\xi>m^{-1}A\},\quad\quad \cZ^2=\{\xi\in\cX : A_\xi\le m^{-1}A\}.
\end{equation}

Observe that if $Q_\eta\subset Q_\xi$, then: (i) if $\eta\in\cZ^1$ then $\xi\in\cZ^1$;
(ii) if $\xi\in\cZ^2$ then $\eta\in\cZ^2$. Hence $\cZ^1$ is a tree with root at $\xi_0$.
If we order $a_\eta$ by their magnitude $a_{\eta_1}\ge a_{\eta_2}\ge\dots\ge0$ then there exists $K$ such that $\sum_{k>K}a_{\eta_k}^\tau\le m^{-1}A$.
Let $j_k$ be such that $\eta_k\in\cX_{j_k}$. Set $M=\max\{j_k : 1\le k\le K\}$. Then $\cZ^1\subset \cup_{j=0}^M\cX_j$ and hence $\cZ^1$ is a finite tree.

Denote by $\cZ^2_0$ the set of all $\eta\in\cZ^2$ with parents in $\cZ^1$.
Then $\cZ^2_0$ is finite (because $\cZ^1$ is finite) and contains all maximal (with respect to inclusion) elements of $\cZ^2$.
Moreover, if $\eta$ and $\zeta$ are two different elements of $\cZ^2_0$, then $Q_\eta\cap Q_\zeta=\emptyset$
and $\cZ^2=\cup_{\eta\in\cZ^2_0} \{\zeta\in\cX : Q_\zeta\subset Q_\eta\}$.

Denote by $\cZ^1_{(m)}$ the elements of $\cZ^1$ with no children in $\cZ^1$
and by $\cZ^1_{(b)}$ the branching nodes of $\cZ^1$, i.e. the elements of $\cZ^1$ with more than one child in $\cZ^1$.
From $A_\xi>m^{-1}A$ for all $\xi\in \cZ^1_{(m)}$  we conclude that $\#\cZ^1_{(m)}\le m-1$
and now $\#\cZ^1_{(b)}\le \#\cZ^1_{(m)}-1$ implies $\#\cZ^1_{(b)}\le m-2$.
Observe that $\#\cZ^1$ can be arbitrary large.

We include in $\Lambda_n$ the indices $\eta_1,\dots,\eta_{m-1}$ of the $m-1$ largest elements $a_{\eta_1}\ge a_{\eta_2}\ge\dots\ge a_{\eta_{m-1}}$
and all elements of $\cZ^1_{(b)}$.
Thus the total number of elements in $\Lambda_n$ is at most $2m-3\le n$ and $\Lambda_n\subset\cZ^1$.

Now, we are ready to establish \eqref{Jackson_b} with the above selection of $\Lambda_n$.
If $\xi\in\cZ^2$, then the fact that $\Lambda_n\subset\cZ^1$, \lemref{lem1}, and \eqref{thm:eq:1} imply
\begin{align}\label{thm:eq:2}
\Bigg(\sum_{\substack{\eta\in\cX\backslash\Lambda_n\\ Q_\eta\subset Q_\xi}}\frac{|Q_\eta|}{|Q_\xi|}a_\eta^q \Bigg)^{1/q}
&= \Bigg(\sum_{Q_\eta\subset Q_\xi}\frac{|Q_\eta|}{|Q_\xi|}a_\eta^q \Bigg)^{1/q}
\\
&\le c_1 \Bigg(\sum_{Q_\eta\subset Q_\xi}a_\eta^\tau\Bigg)^{1/\tau}= c_1 A_\xi^{1/\tau}\le c_1 m^{-1/\tau}A^{1/\tau}. \nonumber
\end{align}

In the case $\xi\in\cZ^1$ we write
\begin{equation}\label{thm:eq:3}
\sum_{\substack{\eta\in\cX\backslash\Lambda_n\\ Q_\eta\subset Q_\xi}}\frac{|Q_\eta|}{|Q_\xi|}a_\eta^q
=\sum_{\substack{\eta\in\cZ^1\backslash\Lambda_n\\ Q_\eta\subset Q_\xi}}\frac{|Q_\eta|}{|Q_\xi|}a_\eta^q
+\sum_{\substack{\eta\in\cZ^2\\ Q_\eta\subset Q_\xi}}\frac{|Q_\eta|}{|Q_\xi|}a_\eta^q  =:S_1+S_2.
\end{equation}
For the estimation of $S_1$ we first observe that \lemref{lem2} implies
\begin{equation}\label{thm:eq:4}
a_\eta\le \max_{\zeta\in\cZ^1\backslash\Lambda_n}a_\zeta\le m^{-1/\tau}A^{1/\tau},\quad \forall \eta\in\cZ^1\backslash\Lambda_n.
\end{equation}
From \eqref{thm:eq:4}, the fact that $\cZ^1_{(b)}\subset\Lambda_n$, and \lemref{lem3} we obtain
\begin{equation}\label{thm:eq:5}
S_1= \sum_{\substack{\eta\in\cZ^1\backslash\Lambda_n\\ Q_\eta\subset Q_\xi}}\frac{|Q_\eta|}{|Q_\xi|}a_\eta^q
\le \sum_{\substack{\eta\in\cZ^1\backslash\cZ^1_{(b)}\\ Q_\eta\subset Q_\xi}}\frac{|Q_\eta|}{|Q_\xi|}(m^{-1/\tau}A^{1/\tau})^q
\le\frac{1}{1-\lambda}(m^{-1/\tau}A^{1/\tau})^q.
\end{equation}
The desired estimate of $S_2$ follows readily from \eqref{thm:eq:2} with $\xi$ replaced by $\zeta\in\cZ^2_0$
\begin{equation}\label{thm:eq:6}
S_2=\sum_{\substack{\zeta\in\cZ^2_0\\ Q_\zeta\subset Q_\xi}}\frac{|Q_\zeta|}{|Q_\xi|}\sum_{Q_\eta\subset Q_\zeta}\frac{|Q_\eta|}{|Q_\zeta|}a_\eta^q
\le \sum_{\substack{\zeta\in\cZ^2_0\\ Q_\zeta\subset Q_\xi}}\frac{|Q_\zeta|}{|Q_\xi|}(c_1 m^{-1/\tau}A^{1/\tau})^q\le (c_1 m^{-1/\tau}A^{1/\tau})^q,
\end{equation}
where we used that the sets $\{Q_\zeta : \zeta\in\cZ^2_0\}$ are disjoint.
Now, combining \eqref{thm:eq:5}, \eqref{thm:eq:6}, and \eqref{thm:eq:3} we obtain for $\xi\in\cZ^1$
\begin{equation}\label{thm:eq:7}
\Bigg(\sum_{\substack{\eta\in\cX\backslash\Lambda_n\\ Q_\eta\subset Q_\xi}}\frac{|Q_\eta|}{|Q_\xi|}a_\eta^q\Bigg)^{1/q}
\le ((1-\lambda)^{-1}+c_1^q)^{1/q}m^{-1/\tau}A^{1/\tau}.
\end{equation}
Finally, \eqref{thm:eq:7} and \eqref{thm:eq:2} give \eqref{Jackson_b} with $c=((1-\lambda)^{-1}+c_1^q)^{1/q}2^{1/\tau}$
and prove the theorem in the case $\cX=\cup_{n=0}^\infty\cX_n$.

Now, we consider the case $\cX=\cup_{n=-\infty}^\infty\cX_n$. Here we work separately in every $\fD^1,\dots,\fD^J$.
Set $\cY_j=\{\xi\in\cX : Q_\xi\subset\fD^j\}$, $j=1,\dots,J$.
Then $\cX=\cup_{j=1}^J \cY_j$ and $Q_\xi\cap Q_\eta=\emptyset$ if $\eta\in \cY_i, \xi\in \cY_j, i\ne j$.

Fix $j, 1\le j\le J$
and set $A=\sum_{\eta\in\cY_j}a_\eta^\tau<\infty$.
If we order $a_\eta$, $\eta\in\cY_j$, by their magnitude $a_{\eta_1}\ge a_{\eta_2}\ge\dots\ge0$,
 then there exists $K$ such that $\sum_{k>K}a_{\eta_k}^\tau\le n^{-1}A$.
Also, there exists $\xi_0\in\cY_j$ such that $Q_{\eta_k}\subset Q_{\xi_0}$ for $1\le k\le K$.
Now we work with this $\xi_0$ as in the case $\cX=\cup_{n=0}^\infty\cX_n$ with $\cY_j$ and $n/J$ instead of $\cX$ and $n$.
Let $\cZ^1, \cZ^2, \Lambda_n$ be defined as above.

Fix $\xi\in\cY_j$  and write
\begin{equation}\label{thm:eq:8}
\sum_{\substack{\eta\in\cY_j\backslash\Lambda_n\\ Q_\eta\subset Q_\xi}}\frac{|Q_\eta|}{|Q_\xi|}a_\eta^q
=\sum_{\substack{\eta\in\cY_j\backslash\Lambda_n\\ Q_\eta\subset Q_\xi\cap Q_{\xi_0}}}\frac{|Q_\eta|}{|Q_\xi|}a_\eta^q
+\sum_{\substack{\eta\in\cY_j\\ Q_\eta\subset Q_\xi\\ Q_\eta\not\subset Q_{\xi_0}}}\frac{|Q_\eta|}{|Q_\xi|}a_\eta^q=:S_1+S_2.
\end{equation}
In \eqref{thm:eq:8} $S_1$ is an empty sum if $Q_\xi\cap Q_{\xi_0}=\emptyset$ and $S_2$ is an empty sum if $Q_\xi\subset Q_{\xi_0}$.
If $Q_\xi\subset Q_{\xi_0}$ then we have
\begin{equation}\label{thm:eq:9}
S_1\le \bigg(c n^{-1/\tau}\Big(\sum_{\eta\in \cY_j}a_\eta^\tau\Big)^{1/\tau}\bigg)^q.
\end{equation}
in view of \eqref{Jackson_b}, which has already been established in this case.
If $Q_{\xi_0}\subset Q_\xi$ then we have
\begin{equation}\label{thm:eq:10}
S_1=\frac{|Q_{\xi_0}|}{|Q_\xi|}\sum_{\substack{\eta\in\cY_j\backslash\Lambda_n\\ Q_\eta\subset Q_{\xi_0}}}\frac{|Q_\eta|}{|Q_{\xi_0}|}a_\eta^q
\le \sum_{\substack{\eta\in\cY_j\backslash\Lambda_n\\ Q_\eta\subset Q_{\xi_0}}}\frac{|Q_\eta|}{|Q_{\xi_0}|}a_\eta^q
\le \bigg(c n^{-1/\tau}\Big(\sum_{\eta\in \cY_j}a_\eta^\tau\Big)^{1/\tau}\bigg)^q.
\end{equation}
in view of \eqref{thm:eq:9} with $\xi=\xi_0$. 
\lemref{lem1} with $\sA=\{\eta\in\cY_j : Q_\eta\subset Q_\xi, Q_\eta\not\subset Q_{\xi_0}\}$ yields
\begin{equation}\label{thm:eq:11}
S_2\le \Bigg(c \bigg(\sum_{\substack{\eta\in\cY_j\\ Q_\eta\subset Q_\xi, Q_\eta\not\subset Q_{\xi_0}}}a_\eta^\tau\bigg)^{1/\tau}\Bigg)^q
\le \bigg(c n^{-1/\tau}\Big(\sum_{\eta\in \cY_j}a_\eta^\tau\Big)^{1/\tau}\bigg)^q.
\end{equation}
Finally, using \eqref{thm:eq:9} (or \eqref{thm:eq:10}) and \eqref{thm:eq:11} in \eqref{thm:eq:8}
and summing over $j=1,\dots,J$ we complete the proof of \eqref{Jackson}.
\end{proof}

Theorem~\ref{thm:Jackson} readily implies the following
\begin{cor}\label{cor:vanish}
Let $0<\tau<\infty$ and $0<q\le\infty$. If $h\in\ell^\tau$, then $h$ belongs to the closure of the finitely supported sequences in the norm of $\fg^q$.
\end{cor}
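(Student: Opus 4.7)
The plan is to derive the corollary directly as an immediate consequence of the Jackson estimate in Theorem \ref{thm:Jackson}. The key observation is that the statement ``$h$ belongs to the closure of the finitely supported sequences in the norm of $\fg^q$'' is equivalent to $\sigma_n(h)_{\fg^q} \to 0$ as $n\to\infty$, since by definition
\begin{equation*}
\sigma_n(h)_{\fg^q} = \inf_{|\supp\,\bar h|\le n} \|h-\bar h\|_{\fg^q}
\end{equation*}
and the infimum is realized (or approximated arbitrarily well) by finitely supported sequences.

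First, I would fix $h\in\ell^\tau$; then $\|h\|_{\ell^\tau}<\infty$ by assumption. Second, I would apply Theorem \ref{thm:Jackson} with the given parameters $0<\tau<\infty$ and $0<q\le\infty$ to obtain, for every $n\in\NN$,
\begin{equation*}
\sigma_n(h)_{\fg^q}\le c\,n^{-1/\tau}\|h\|_{\ell^\tau}.
\end{equation*}
Third, since the right-hand side tends to zero as $n\to\infty$, we conclude $\sigma_n(h)_{\fg^q}\to 0$. Hence for every $\eps>0$ we can choose $n$ large enough and a finitely supported sequence $\bar h$ (with $\#\supp\,\bar h\le n$) so that $\|h-\bar h\|_{\fg^q}<\eps$, which is exactly the assertion of the corollary.

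There is no significant obstacle here, since all the work has been absorbed into Theorem \ref{thm:Jackson}; the only minor point to note explicitly is that the infimum defining $\sigma_n$ is in fact attained (or can be replaced by a finitely supported near-minimizer), so the vanishing of $\sigma_n(h)_{\fg^q}$ translates into density of finitely supported sequences in the $\fg^q$-closure of $\{h\}$. This makes the proof essentially a one-line deduction.
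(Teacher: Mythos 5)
Your proof is correct and coincides with the paper's approach: the paper states explicitly that Corollary \ref{cor:vanish} follows readily from Theorem \ref{thm:Jackson}, and your argument — that the Jackson estimate forces $\sigma_n(h)_{\fg^q}\to 0$, hence finitely supported near-minimizers exist for every $\eps>0$ — is exactly that deduction. The minor point you flag (that the infimum defining $\sigma_n$ is taken over finitely supported sequences, so only near-minimizers, not exact minimizers, are needed) is handled correctly.
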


\subsection{Proof of Theorem~\ref{thm:Jackson_TL}}

We shall obtain \thmref{thm:Jackson_TL} as a consequence of \thmref{thm:Jackson}
and the frame decompositions of $\sepTL{\cF}{0}{q}(\SS)$ and $\cB^{s\tau}_{\tau}(\SS)$.
\thmref{thm:Jackson} uses the sequence space $\fg^q$ defined with the help of a nested structure given in Definition~\ref{nested_structure}.
For $\SS$ we shall use the index set $\cX=\cup_{n=0}^\infty\cX_n$ as defined in \S\ref{s5_3} and the nested structure from \thmref{nested_sets}.
Note that conditions \eqref{dyad_eq:1}--\eqref{dyad_eq:3} in Definition~\ref{nested_structure} follow from \eqref{dyadic_eq:1}--\eqref{dyadic_eq:3};
\eqref{dyad_eq:4a} follow from \eqref{dyadic_eq:6}.
Finally, \eqref{dyad_eq:6a} follows from \propref{prop:nested_sets}~(2) by choosing $b\ge 4$.
Thus, \thmref{nested_sets} with $b=4$ and $\bar{\beta}=1/12$ gives a nested structure
in the sense of Definition~\ref{nested_structure}.

Let $\Theta=\{\theta_\xi\}_{\xi\in\cX}$ be the system %defined in \cite[(6.36)--(6.37)]{IP2} and used in 
from Theorems~\ref{thm:prop-frame} and \ref{thm:frame-infty}.
It is important that $\Theta$ provides a frame decomposition for $\sepTL{\cF}{0}{q}(\SS)$ and $\cB^{s\tau}_{\tau}(\SS)$ simultaneously.
%According to \cite[Theorem 6.7 (b)]{IP2} every 
Recall that each element $\theta_\xi$ is a linear combination of at most $\tilde{n}$ shifts of the Newtonian kernel.
Using \thmref{thm:prop-frame}~(c) with $p=q=\tau$ we obtain that for any $f\in \cB^{s\tau}_{\tau}$ 
we have $\{\langle f, \tilde\theta_\eta\rangle\}\in\fb^{s\tau}_\tau$,
where $\tilde\theta_\eta$ is defined in \eqref{def-f-dual-f}.
Set $h_\eta= |Q_\eta|^{-1/2}\langle f, \tilde\theta_\eta\rangle$. Then in view of \eqref{def-b-space} we have $h\in\ell^\tau$.
Now, according to \thmref{thm:Jackson} $h\in\fg^q$, i.e. $\{\langle f, \tilde\theta_\eta\rangle\}\in\ff^{0q}_\infty$
(see \eqref{def-f-space-infty2} and \eqref{def:g_q}) and \corref{cor:vanish} implies $\{\langle f, \tilde\theta_\eta\rangle\}\in\sepTL{\ff}{0}{q}$.
Applying \thmref{thm:frame-infty} we conclude that $f\in \sepTL{\cF}{0}{q}(\SS)$ and
\begin{equation}\label{eq:Jackson_TL1}
\|f\|_{\cF_\infty^{0q}}\le c \|h\|_{\fg^q} \le c \|h\|_{\ell^\tau} \le c \|f\|_{\cB^{s\tau}_{\tau}}.
\end{equation}

If $n< \tilde{n}$, then \eqref{jackson_TL} follows from $E_n(f)_{\cF_\infty^{0q}}\le\|f\|_{\cF_\infty^{0q}}$ and \eqref{eq:Jackson_TL1}.
Now, let $n\ge \tilde{n}$. Set $m=\left\lfloor n/ \tilde{n}\right\rfloor\ge 1$.
\thmref{thm:Jackson} implies the existence of an index set $\Lambda\subset\cX$ such that $\#\Lambda\le m$ and
\begin{equation}\label{eq:Jackson_TL2}
\sup_{\xi\in\cX}\Bigg(\sum_{Q_\eta\subset Q_\xi, \eta\notin\Lambda}|h_\eta|^q \frac{|Q_\eta|}{|Q_\xi|}\Bigg)^{1/q}
\le c m^{-1/\tau}\|h\|_{\ell^\tau}\le c n^{-1/\tau}\|h\|_{\ell^\tau}.
\end{equation}
Set $g=\sum_{\eta\in\Lambda}\langle f, \tilde\theta_\eta\rangle \theta_\eta$ and $\bar{h}=\sum_{\eta\in\Lambda}h_\eta$.
Then, using \thmref{thm:frame-infty}, \eqref{eq:Jackson_TL2}, and \thmref{thm:prop-frame}~(c) we obtain
\begin{equation*}
E_n(f)_{\cF_\infty^{0q}}\le\|f-g\|_{\cF_\infty^{0q}}\le c \|h-\bar{h}\|_{\fg^q}
\le c n^{-1/\tau}\|h\|_{\ell^\tau}
\le c n^{-s/(d-1)}\|f\|_{\cB^{s\tau}_{\tau}},
\end{equation*}
which completes the proof of Theorem~\ref{thm:Jackson_TL}.
\qed

\end{document}